\newcommand{\A}{{\mathcal{A}}}
\newcommand{\br}[3]{{$#1$}$\lower4pt\hbox{$\tp\atop\raise4pt \hbox{$\scriptscriptstyle{#2}$}$} ${$#3$}}
\newcommand{\tw}[3]{{$#1$}${\,\scriptscriptstyle {#2}}\atop\raise9pt\hbox{$\scriptstyle\tp$} ${$#3$}}
\newcommand{\ttps}[2]{{#1}\raise5pt\hbox{$\lower12pt\hbox{$\scriptstyle\tp$}\atop \lower0pt\hbox{$\tilde\;$}$}\raise4.5pt\hbox{${\scriptstyle{#2}}$}}
\newcommand{\st}[1]{\mbox{${\,\scriptscriptstyle {#1}}\atop\raise5.5pt\hbox{$*$}$}}
\newcommand{\rd}[1]{\mbox{${\,\scriptscriptstyle {#1}}\atop\raise5.5pt\hbox{$\bullet$}$}}
\newcommand{\rt}[1]{\otimes_\chi}
\newcommand{\lt}[1]{\mbox{${\,\scriptscriptstyle {#1}}\atop\raise5.5pt\hbox{$\ltimes$}$}}
\newcommand{\btr}{\raise1.2pt\hbox{$\scriptstyle\blacktriangleright$}\hspace{2pt}}
\newcommand{\btl}{\raise1.2pt\hbox{$\scriptstyle\blacktriangleleft$}\hspace{2pt}}
\newcommand{\lcr}{\raise1.0pt \hbox{${\scriptstyle\rightharpoonup}$}}
\newcommand{\rcr}{\raise1.0pt \hbox{${\scriptstyle\leftharpoonup}$}}
\newcommand{\ttp}{{\lower12pt\hbox{$\tp$}\atop \hbox{$\tilde\;$}}}
\newcommand{\id}{\mathrm{id}}
\newcommand{\im}{\mathrm{im}\:}
\newcommand{\kb}{\boldsymbol{k}}
\newcommand{\Ru}{\mathcal{R}}
\newcommand{\E}{\mathcal{E}}
\newcommand{\Mc}{\mathcal{M}}
\newcommand{\Q}{\mathcal{Q}}
\renewcommand{\O}{\mathcal{O}}
\newcommand{\C}{\mathbb{C}}
\newcommand{\Z}{\mathbb{Z}}
\newcommand{\tp}{\otimes}
\newcommand{\vt}{\vartheta}
\newcommand{\ve}{\varepsilon}
\newcommand{\gm}{\gamma}
\newcommand{\dt}{\delta}
\newcommand{\op}{\oplus}
\newcommand{\la}{\lambda}
\newcommand{\End}{\mathrm{End}}
\newcommand{\Ind}{\mathrm{Ind}}
\newcommand{\Tr}{\mathrm{Tr}}
\newcommand{\Rm}{\mathrm{R}}
\newcommand{\diag}{\mathrm{diag}}
\newcommand{\ad}{\mathrm{ad}}
\newcommand{\La}{\Lambda}
\newcommand{\g}{\mathfrak{g}}
\renewcommand{\b}{\mathfrak{b}}
\renewcommand{\k}{\mathfrak{k}}
\newcommand{\h}{\mathfrak{h}}
\newcommand{\mub}{\boldsymbol{\mu}}
\newcommand{\nb}{\boldsymbol{n}}
\newcommand{\mb}{\boldsymbol{m}}
\newcommand{\s}{\mathfrak{s}}
\newcommand{\n}{\mathfrak{n}}
\newcommand{\m}{\mathfrak{m}}
\newcommand{\nn}{\nonumber}
\newcommand{\p}{\mathfrak{p}}
\renewcommand{\l}{\mathfrak{l}}
\renewcommand{\c}{\mathfrak{c}}
\newcommand{\Spec}{\mathrm{Spec}}
\newcommand{\si}{\sigma}
\newcommand{\al}{\alpha}
\newcommand{\bt}{\beta}
\newcommand{\be}{\begin{eqnarray}}
\newcommand{\ee}{\end{eqnarray}}
\newtheorem{thm}{Theorem}[section]
\newtheorem{propn}[thm]{Proposition}
\newtheorem{lemma}[thm]{Lemma}
\newtheorem{definition}[thm]{Definition}
\newcommand{\parag}{\advance\prg by1 {\noindent\bf\thesection.\the\prg\hspace{6pt}}}
\begin{document}
\title{Non-Levi closed conjugacy classes of $SP_q(2n)$}
\author{
Andrey Mudrov \vspace{20pt}\\
\small Department of Mathematics,\\ \small University of Leicester, \\
\small University Road,
LE1 7RH Leicester, UK\\
\small e-mail: am405@le.ac.uk\\
%[0.3in]\select{DRAFT}
}

\date{}
\maketitle

\begin{abstract}
We construct an explicit quantization of semi-simple conjugacy classes of the complex symplectic group
$SP(2n)$ with non-Levi isotropy subgroups through an operator realization
on  highest weight modules over the quantum group $U_q\bigl(\s\p(2n)\bigr)$.
\end{abstract}

{\small \underline{Mathematics Subject Classifications}: 81R50, 81R60, 17B37.
}

{\small \underline{Key words}: Quantum groups, deformation quantization, conjugacy classes, representation theory.
}
\section{Introduction}
We construct a quantization of closed conjugacy classes of the complex algebraic group $SP(2n)$ whose
isotropy subgroup is not of Levi type. Such classes are not isomorphic to adjoint orbits in the Lie
algebra $\s\p(2n)$, and their Poisson structure  is not exactly $SP(2n)$-invariant.
The quantization features  a quantum group symmetry, which is a deformation of the
conjugation action of $SP(2n)$  on itself. The present study is based on \cite{M2} and
develops the ideas of \cite{M3}, where the simplest non-Levi conjugacy class $SP(4)/SP(2)\times SP(2)$ has been worked
out in details.

The conjugacy classes of interest form a family that is as large as of Levi type:
they involve diagonalizable symplectic matrices whose eigenvalues simultaneously include  $+1$ and $-1$
(a Levi class may have at most one of them).
Note that among the classical matrix groups only symplectic and orthogonal groups admit classes of this
type: for the special linear group they are  all isomorphic to adjoint orbits in the Lie algebra
and have Levi isotropy subgroups. In the present article we address only symplectic groups. Orthogonal
groups are given a special treatment in \cite{M5} based on a similar approach. Besides the basic similarities,
there are certain technical distinctions, and we have chosen to separate the orthogonal case from symplectic in order to simplify
the presentation.

The Poisson structure on the conjugacy classes comes from a Poisson structure on the group, which is
analogous to the canonical invariant Poisson structure on the Lie algebra $\g=\s\p(2n)$ (we assume the natural isomorphism
between the adjoint and coadjoint representations of $\g$). Quantization of
this structure is analogous to quantization of the Kostant-Kirillov-Souriau bracket
on the coadjoint orbits, with the difference that the former allows for quantum
group symmetry rather than classical.

Quantization of conjugacy classes with Levi isotropy subgroups has been constructed in \cite{M2} using the representation theory
of quantum groups. We should stress that the methods of \cite{M2} are inapplicable, as they are, for the non-Levi classes, whose
quantization is still an open problem. In our recent paper \cite{M3} we have shown how to approach it on
the simplest example of $SP(4)/SP(2)\times SP(2)$. In this work, we develop those ideas further and cover all
non-Levi conjugacy classes of $SP(2n)$. Along with the Levi type worked out in \cite{M2}, this is solving the problem
for all diagonalizable classes of $SP(2n)$.

Let us  we explain our methods.
It is natural to seek a quantization of an affine variety in terms of generators and relations, in other words,
as a quotient of a free algebra. Supposedly this projection factors through a projection
from a quantized coordinate ring $\C_\hbar[G]$ of the group  $G=SP(2n)$. It is an equivariant quantization
of the Poisson structure on $G$ whose restriction to conjugacy classes is the Poisson structure of our
interest. The algebra $\C_\hbar[G]$ is well studied
and its explicit description in generators and relations is available. It is related to the "reflection
equation" or "boundary Yang-Baxter equation", which is well established in the mathematical physics literature, \cite{KSkl,Mj}.
To ensure that the quotient of $\C_\hbar[G]$ is a flat deformation, we seek to realize it
in an algebra that is flat over the ring of formal power series in the deformation parameter $\hbar$.
Due to  certain structural properties of $\C_\hbar[G]$, this would also yield the defining relations,
provided we have managed to find an ideal in the kernel turning into the defining ideal in the classical
limit (such an ideal shall
automatically coincide with the kernel).

The algebra $\C_\hbar[G]$ can be also realized as a subalgebra in
the quantized universal enveloping algebra $U_\hbar(\g)$. This fact enables one
to construct the quantization of classes through
a realization of $\C_\hbar[G]$ in the algebra of endomorphisms of an appropriate $U_\hbar(\g)$-module.
This approach was successfully applied to conjugacy classes with Levi isotropy subgroups, which were quantized via
 parabolic Verma modules.
However, there is no immediate analog
of parabolic Verma modules for non-Levi subalgebras in $\g$. The obstructions are two-fold. Firstly, there is
no natural candidate for the quantized stabilizer as a subalgebra in $U_\hbar(\g)$. Secondly, even in the classical
case there is no parabolic extension of the non-Levi stabilizer. These are the principal properties that facilitate
the parabolic induction in the Levi case.
Therefore the key step is to find a suitable replacement of the parabolic Verma modules for non-Levi conjugacy classes.
 We take a quotient of a special auxiliary parabolic Verma module for it, which is chosen as follows.

Let $K\subset G$ denote the stabilizer of the initial point of the class. It contains a maximal
Levi subgroup $L\subset K$. There are actually two such subgroups, which correspond
to the two symplectic blocks $SP(2m)$ and $SP(2p)$ in $K$ rotating the $-1$-eigenspace and, respectively,
the $+1$-eigenspace of the initial point. We obtain $L$ by reducing  $SP(2m)$
 to $GL(m)$.
In the classical situation, the isotropy subalgebra $U(\k)\subset U(\g)$ is generated
over the Levi subalgebra $U(\l)$ by a certain pair of root vectors $e_{\dt}, f_{\dt}$.
We construct the parabolic Verma $U_\hbar(\g)$-module $\hat M_\la$  relative to $U_\hbar(\l)$,
where the highest weight $\la$ is conditioned by the presence of a singular vector  of weight $\la-\dt$.
The quotient $M_\la$ of $\hat M_\la$ over the  submodule generated by that singular vector is
the module where we realize the quantization of $\C[G/K]$.

Intuitively the passage from
$\hat M_\la$ to $M_\la$ can be interpreted as follows. The functional dimension of $\hat M_\la$
is $\frac{1}{2} \dim G/L$.
Its algebra of endomorphisms (locally finite part of)
is isomorphic to the tensor product $\hat M_{-\la}^*\tp \hat M_\la$, where $\hat M_{-\la}^*$ is the restricted
dual $U_\hbar(\g)$-module. It can be realized through a parabolic induction, and the pairing between
$\hat M_{-\la}^*$ and $\hat M_\la$ is a kind of Shapovalov form. The module $\hat M_{-\la}^*\tp \hat M_\la$ is
isomorphic to $\Ind_\l^\g \C$, where the induction is understood in the quantum group setting.
It  is a deformation of the classical induced module $\Ind_\l^\g \C$,
which is in duality with the function algebra on the coset space $G/L$; hence the idea to
realize the polynomial algebra $\C[G/L]$ in the self-dual module $\hat M_{-\la}^*\tp \hat M_\la$.  This qualitative consideration
explains why we should eliminate the part of $\hat M_\la$ generated by
$f_\dt v$. Doing so we kill the extra degrees of freedom along $\ad(\l) f_\dt$ and obtain a module, $M_\la$, with  the
proper functional dimension $\frac{1}{2}\dim G/K$.

The subalgebra $\C_\hbar[G]\subset U_\hbar(\g)$ is generated by the entries of an invariant
 matrix  $\Q\in \End(\C^{2n})\tp \C_\hbar[G]$ canonically
constructed  from the universal R-matrix of $U_\hbar(\g)$.
The problem of  the quantized ideal of the class boils down to determining the minimal polynomial of
 $\Q$ regarded
as an operator on $\C^{2n}\tp M_\la$.
The matrix $\Q$ is semi-simple on $\C^{2n}\tp \hat M_\la$, and its eigenvalues are known.
Clearly $\Q$ satisfies the same polynomial equation on $\C^{2n}\tp M_\la$, which
is however not necessarily minimal. We prove that the extra eigenvalue drops from the spectrum
of $\Q$ in the transition from $\hat M_\la$ to $M_\la$ and obtain the minimal polynomial
on  $\C^{2n}\tp  M_\la$ from that on $\C^{2n}\tp \hat M_\la$, through this reduction.

The passage from $\hat M_\la$ to $M_\la$ is analogous to the passage
from  to $G/L$  to  $G/K$, where the class $G/L$ is obtained from $G/K$ by splitting the $2m$ eigenvalues $-1$ into
$m$ pairs of reciprocals $\mu$, $\mu^{-1} \not= \pm 1$. In the limit as $\mu\to -1$ they glue
up, and the isotropy subgroup jumps from $L$ to $K$.
The minimal polynomial
of $G/L$ acquires a non-simple factor $(x+1)^2$, which should be reduced in the minimal polynomial
of $G/K$.
Similarly, we check that the extra divisor of the minimal polynomial of $\Q$
is canceled  in the projection   $\C^{2n}\tp \hat M_\la\to \C^{2n}\tp M_\la$, and the classical limit yields the minimal polynomial of $G/K$.
This implies the second important step of our strategy: the analysis of the $U_\hbar(\g)$-module $\C^{2n}\tp M_\la$
and the invariant operator $\Q$ on it.

Putting the non-Levi conjugacy classes into a common quantization scheme with the classes of Levi type implies several
far reaching consequences. First of all, recall that the latter
(along with quantum semi-simple coadjoint orbits) gave rise to the theory of dynamical Yang-Baxter equation
over a general non-Abelian base, \cite{DM}. To a large extent, that theory is based on the properties of the
parabolic $\O^\l$-category. Recall that it is a module category over that of finite dimensional representations of $U_\hbar(\g)$
under the tensor product multiplication (for a definition of module categories over monoidal categories, see e.g. \cite{O}).
We observe an analogous category $\O^\k$ associated with a non-Levi quantum conjugacy class,
which is generated (as a module category) by $M_\la$ of a feasible weight $\la$.
It is natural to expect that the study of $\O^\k$ will make a significant contribution
to the present theory of dynamical Yang-Baxter equation. Further, the parabolic category $\O^\l$ consists of $U_\hbar(\g)$-modules
that are parabolically induced from $U_\hbar(\l)$-modules.
At the same time, the algebra $U(\k)$ is not quantized as a Hopf subalgebra in $U_\hbar(\g)$, and
there is no {\em a priori} natural quantum counterpart for it.
It is therefore interesting
to understand its quantization, which would be a  $\k$-analog of the Levi subalgebra $U_\hbar(\l)$.
This might help to understand the category $\O^\k$.

An important special case of  non-Levi conjugacy classes comprises the symmetric spaces $SP(2n)/SP(2m)\times SP(2p)$, $m+p=n$.
There is an extended literature on their quantization in connection with
integrable models, \cite{FZ}, and representation theory, \cite{K,N,L3}.  The quantized function algebras were basically
viewed as subalgebras in the Hopf dual to $U_\hbar(\g)$ annihilated by certain
coideal subalgebras. Such subalgebras play the role of quantum stabilizers of the "initial point".
 An advanced
theory of quantum symmetric pairs (equivalently, quantum stabilizers) was developed in \cite{NS,L1,L2,L4}.
In the present paper, we adopt a different approach to quantization realizing it
by endomorphisms in a  $U_\hbar(\g)$-module.
The two approaches are complementary, as in the
classical geometry a closed conjugacy class can be alternatively presented as a subalgebra and a quotient
algebra of $\C[G]$. That is also possible in the special case of symmetric classes, because they possess  a "classical point", i.e.
a one-dimensional representation of $\C_\hbar[G]$. Equivalently, it is a numerical matrix solving the reflection
equation. Such a  matrix  is annihilated by the coideal subalgebra and  serves as the initial
point of the class in the theory of symmetric pairs. Contrary to our setting, this initial point is not diagonal.
As we already mentioned, our approach is lacking the quantum version of stabilizer, and that complicates  the further study
of the quantized non-Levi classes.
It is an interesting problem to match our approach with the theory of quantum symmetric pairs.
That could help to identify  the quantum stabilizer within the present approach and
 facilitate further advances in  the theory of quantized non-Levi conjugacy classes.

\section{Classical conjugacy classes}
\label{SecCCC}
Throughout the paper, $G$ designates the algebraic group $SP(2n)$ of symplectic matrices preserving
a non-degenerate skew symmetric form $||C_{ij}||_{i,j=1}^{2n}$ in the complex vector space $\C^{2n}$; the
Lie algebra of $G$ is denoted by $\g$.
We choose the realization  $C_{ij}=\epsilon_{i}\delta_{ij'}$, where
$\dt_{ij}$ is the Kronecker symbol,
$i'=2n+1-i$, and $\epsilon_i=-\epsilon_{i'}=1$ for  $i=1,\ldots, n$.

The polynomial ring $\C[G]$ is generated by the matrix coordinate functions $||A_{ij}||_{i,j=1}^{2n}$,
modulo the set of $2n\times 2n$ relations written in the matrix form as
\be
ACA^t=C.
\label{ideal_group}
\ee
The right conjugacy action of $G$ on itself induces a left  action on $\C[G]$ by duality; the matrix $A$  is invariant
as an element of $\End(\C^{2n})\tp \C[G]$.

The group $G$ is equipped with the Drinfeld-Sklyanin bivector field
\be
\{A_1,A_2\}=\frac{1}{2}(A_2A_1r-r A_1A_2),
\label{poisson_br_DS}
\ee
where $r\in \g\tp \g$ is a solution of the classical Yang-Baxter equation, \cite{D}.
Equation (\ref{poisson_br_DS}) is understood in $\End(\C^{2n})\tp \End(\C^{2n})\tp \C[G]$. The
subscripts label the natural embeddings of $\End(\C^{2n})$ in $\End(\C^{2n})\tp \End(\C^{2n})$, as usual in the
quantum groups literature.

The bivector field (\ref{poisson_br_DS}) defines a Poisson  bracket on
$\C[G]$ making $G$ a Poisson group.
There is a variety of solutions of the classical Yang-Baxter equation, which are parameterized by combinatorial objects (Manin triples)
and certain Cartan bivectos, \cite{BD}. We choose the so called standard solution
\be
r=\sum_{i=1}^{2n}(e_{ii}\tp e_{ii}-e_{ii}\tp e_{i'i'})
+2\sum_{i,j=1\atop i>j}^{2n}(e_{ij}\tp e_{ji}-\epsilon_{i}\epsilon_{j}e_{ij}\tp e_{i'j'}),
\label{r-matrix}
\ee
which is the simplest of all. The corresponding quantum group is pretty similar to the
classical universal enveloping algebra. In particular, one can define
quantum  Levi subalgebras, parabolic subalgebras {\em etc} facilitating  the parabolic
induction. The restriction to the standard solution (\ref{r-matrix}) is methodological. If one is concerned
with an abstract quantization, in terms of generators and relations rather than an operator realization,
the other cases can be readily obtained from the standard.
At the end of the article, we indicate what modifications to the resulting formulas should be made in order to include an
 arbitrary r-matrix.

We regard the group $G$ as a $G$-space under the  conjugation action.
The object of our study is another Poisson structure on $G$,
\be
\{A_1,A_2\}=\frac{1}{2}(A_2r_{21}A_1-A_1rA_2+A_2A_1r-r_{21}A_1A_2),
\label{poisson_br_sts}
\ee
in the shortcut matrix form. It is compatible with the conjugation action and makes $G$ a Poisson space
over the Poisson group $G$ equipped with the Drinfeld-Sklyanin bracket (\ref{poisson_br_DS}).
The bivector (\ref{poisson_br_sts}) restricts to every conjugacy class making it a Poisson homogeneous
space over $G$.

A closed conjugacy class $O\subset G$ consists of diagonalizable matrices and is
determined by the set of their eigenvalues
$S_O=\{\mu_i,\mu_i^{-1}\}_{i=1}^n$.
Every eigenvalue $\mu$ enters $S_O$ along with its reciprocal $\mu^{-1}$. In particular, there may be $\mu=\mu^{-1}=\pm 1$.
One should distinguish two situations: a) $S_O$ contains either $+1$ or $-1$ or none, and b) both $+1$ and $-1$ belong to $S_O$.
In the first case, $O$ is isomorphic to an orbit in $\g$ via the Cayley transformation, and its isotropy subgroup is
of Levi type. A conjugacy class of second type is not isomorphic to an adjoint orbit.
In terms of Dynkin diagram, every  Levi subgroup is obtained by scraping out
a subset of nodes, while for non-Levi isotropy
subgroups one should scrape out a set of nodes from the affine Dynkin diagram of $\g$.

$$
\begin{picture}(150,40)
\put(60,25){Levi} \put(128,17){$\scriptstyle{\pm 1}$}
  \put(0,10){\circle{2}}  \put(1,10){\line(1,0){18}}
  \put(17,8){$\scriptstyle{\times}$}  \put(41,10){\line(1,0){18}}
  \put(40,10){\circle{2}}  \put(21,10){\line(1,0){18}}
  \put(60,10){\circle{2}}  \put(64,9.3){\ldots}
  \put(80,10){\circle{2}}  \put(81,10){\line(1,0){18}}
  \put(97,8){$\scriptstyle{\times}$}  \put(101,10){\line(1,0){18}}
  \put(120,10){\circle{2}}  \put(123.5,9){\line(1,0){16}}
  \put(140,10){\circle{2}}  \put(123.5,11){\line(1,0){16}}
   \put(120,7){$<$}
\end{picture}
\quad\quad
\begin{picture}(150,40)
\put(50,25){Non-Levi}
  \put(0,10){\circle{2}}  \put(0.5,9){\line(1,0){16}}\put(0.5,11){\line(1,0){16}}   \put(11,7){$>$}
  \put(20,10){\circle{2}}  \put(21,10){\line(1,0){18}}
  \put(37,8){$\scriptstyle{\times}$}  \put(41,10){\line(1,0){18}}
  \put(60,10){\circle{2}}  \put(64,9.3){\ldots}
  \put(80,10){\circle{2}}  \put(81,10){\line(1,0){18}}
  \put(97,8){$\scriptstyle{\times}$}  \put(101,10){\line(1,0){18}}
  \put(120,10){\circle{2}}     \put(120,7){$<$}\put(123.5,9){\line(1,0){16}}\put(123.5,11){\line(1,0){16}}
  \put(140,10){\circle{2}}
\put(128,17){$\scriptstyle{\pm 1}$}\put(3,17){$\scriptstyle{\mp 1}$}
\end{picture}
$$

Informally, a non-Levi centralizer necessarily contains two
symplectic blocks rotating the eigenspaces of eigenvalues  $\pm 1$.

We associate with a class $O$ an integer valued
vector $\nb=(n_1,\ldots, n_\ell, m,p)$ and a complex valued vector $\mub=(\mu_1,\ldots, \mu_\ell,-1,1)$
assuming $\mu_i$, $i=1,\ldots, \ell$, all invertible, not a square root of $1$, and $\mu_i\not =\mu_j^{\pm 1}$, $i\not =j$.
The initial point $o\subset O$ will be  fixed to the diagonal matrix with the entries
$$\underbrace{\mu_1,\ldots, \mu_1}_{n_1},\ldots, \underbrace{\mu_\ell,\ldots, \mu_\ell}_{n_\ell}
,\underbrace{-1,\ldots, -1}_{m},
\underbrace{1,\ldots, 1}_{2p},\underbrace{-1,\ldots, -1}_m,
\underbrace{\mu_\ell^{-1},\ldots, \mu_\ell^{-1}}_{n_\ell},\ldots, \underbrace{\mu_1^{-1},\ldots, \mu_1^{-1}}_{n_1},
$$
so that $\sum_{i=1}^\ell n_i+m+p=n$.
We reserve the integers  $m=n_{\ell+1},p=n_{\ell+2}$ to denote respectively, the ranks of the blocks
corresponding to $-1=\mu_{\ell+1}$ and $+1=\mu_{\ell+2}$ (we view $\pm1$ as degenerations of
the parameters $\mu_{\ell+1}$ and $\mu_{\ell+2}$).
The specialization  $n_1=\ldots =n_\ell=0$ is formally encoded by $\ell=0$ and
 referred to as the {\em symmetric case}, because the corresponding
conjugacy class is a symmetric space.

The stabilizer subgroup of the initial point $o$ is the direct product
\be
\label{gr_K}
K=GL(n_1)\times\ldots\times GL(n_\ell)\times SP(2m)\times SP(2p)
\ee
and it is determined by the vector $\nb$.
The positive integer $\ell$ counts the number of  the $GL$-blocks in $K$.
In the symmetric case, (\ref{gr_K}) reduces to $SP(2m)\times SP(2p)$.

Let $\Mc_K$ denote the moduli space of conjugacy classes with
the fixed isotropy subgroup (\ref{gr_K}), regarded as Poisson spaces.
The set of all  $\ell+2$-tuples $\mub$  with invertible components  such that
$\mu_{\ell+1}^2=\mu_{\ell+2}^2=1$
and $\mu_i\not= \mu_j^{\pm 1}$
for distinct $i,j$
parameterize $\Mc_K$ albeit not uniquely.
Multiplication by the scalar matrix $-1\in G$ preserves this set and swaps $\mu_{\ell+1}$ with $\mu_{\ell+2}$.
This transformation is an automorphism of $G$ as a  $G$-space
and preserves the Poisson structure (\ref{poisson_br_sts}). Therefore, the subset $\hat \Mc_K$ of $\mub$
with fixed $\mu_{\ell+1}=-1$ and $\mu_{\ell+2}=1$ can also be used for parametrization
of $\Mc_K$. The residual ambiguity is due to permutations of the components
$\mu_i\not =\pm 1$ with equal multiplicities.

The class $O$ associated with $\mub$ and $\nb$ is determined by the set of polynomial equations
\be
(A-\mu_1)\ldots (A-\mu_\ell)
(A+1)(A-1)(A-\mu_\ell^{-1})\ldots (A-\mu_1^{-1})=0,
\label{min_pol_cl}
\\
\Tr(A^k)=\sum_{i=1}^\ell n_i(\mu_i^{k}+\mu_i^{-k})+2m(-1)^k+2p, \quad k=1,\ldots, 2n,
\label{tr_cl}
\ee
on the entries of the matrix $A$.
In fact, the ideal in $\C[G]$ generated by this set of relations is radical
and therefore coincides with the defining ideal of $\C[O]$ in $\C[G]$. This is a consequence of the following general fact.

Consider a smooth variety $X$  in an affine space $Y$ of dimension $\dim(Y)$.
Suppose that $X$ is defined by a system of polynomial equations $F_i(x)=0$, $i\in I$, where
$I$ is a finite set of indices. The ideal  $J'=(F_i)_{i\in I}$ is contained in the
defining ideal  $J$ of $X$,  i.e. the ideal of all polynomial functions vanishing on $X$. In general,
$J'$ might be less than $J$, and then the quotient $\C[Y]/J'$ cannot be regarded
as a ring of functions on $X$: there will be nilpotent elements in $\C[Y]/J'$. It is essential for our
approach to quantization to ensure  that the ideal $J'$ is exactly $J$. The latter obeys
a certain maximality requirement, which plays a role in the construction.
We will use the following criterion of radicality of $J'$.
\begin{propn}
\label{radical_gen}
Suppose that the rank of the differential $\{dF_i\}_{i\in I}$
is  equal to $\dim(Y)-\dim(X)$  at every point $x\in X$. Then the ideal $J'\subset \C[Y]$ generated by $\{F_i\}_{i\in I}$
coincides with  the defining ideal $J$ of $X$.
\end{propn}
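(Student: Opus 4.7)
The plan is to reduce the claim to the radicality of $J'$. The inclusion $J' \subset J$ is automatic because every $F_i$ vanishes on $X$, while Hilbert's Nullstellensatz combined with the hypothesis $V(J')=X$ yields $\sqrt{J'}=J$; hence proving that $J'$ is radical is equivalent to proving the desired equality $J'=J$.

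Radicality of $J'$ is equivalent to reducedness of $\C[Y]/J'$, which I test locally at every maximal ideal of $\C[Y]$. Those not containing $J'$ produce the zero ring, so I may restrict attention to maximal ideals $\m_x$ corresponding to points $x\in X$. Fix such a point and set $A=\C[Y]_{\m_x}$, $I=J'A$, and $c=\dim(Y)-\dim(X)$. The Jacobian hypothesis says that the image of $I$ in the cotangent space $\m_x/\m_x^2$ has dimension exactly $c$; I then choose $F_{i_1},\dots,F_{i_c}$ among the $F_i$ whose differentials at $x$ span this image, and set $I'=(F_{i_1},\dots,F_{i_c})A\subset I$.

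Because $A$ is a regular local ring of dimension $\dim(Y)$ and the $F_{i_k}$ have linearly independent residues modulo $\m_x^2$, a standard application of Nakayama's lemma shows that they form part of a regular system of parameters; consequently $A/I'$ is itself a regular local ring, of dimension $\dim(Y)-c=\dim(X)$, and in particular an integral domain.

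It remains to check $I=I'$. The containment $I'\subset I$ is automatic. Conversely, $A/I$ surjects onto the local ring of $X$ at $x$, which has dimension $\dim(X)$ by smoothness of $X$, so $\dim(A/I)\geq \dim(X)=\dim(A/I')$; being also a quotient of the integral domain $A/I'$, dimension theory in the regular (and hence catenary) local ring $A/I'$ forces the kernel $I/I'$ to be contained in the unique minimal prime $(0)$. Therefore $I=I'$, the quotient $A/I$ is regular and hence reduced, and the global ideal $J'$ is radical. The only delicate step I anticipate is the local identification of $A/I'$ as regular via the regular-system-of-parameters argument; once that is in place, the rest is routine Krull-dimension bookkeeping.
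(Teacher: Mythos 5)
Your proof is correct and follows essentially the same route as the paper: both arguments reduce to showing that the local ring of $\C[Y]/J'$ at every point of $X$ is regular, whence the radicality of $J'$. The paper simply cites the Jacobian criterion of smoothness from Eisenbud for this step, whereas you re-derive it explicitly by extracting $c=\dim Y-\dim X$ of the $F_i$ forming part of a regular system of parameters and then using the Krull-dimension argument to show the remaining generators are redundant in the local ring — a self-contained version of the same idea.
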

\begin{proof}
Denote by   $A'=\C[Y]/J'$ and  $A=\C[Y]/J$  the quotient algebras and consider their affine schemes with the structure
sheafs $\mathcal{O}'$ and $\mathcal{O}$, respectively. Since
$J$ is the radical of $J'$, the natural embedding  $\Spec(A')\to
\Spec(A)$ is an isomorphism making $\mathcal{O}$  a
subsheaf in $\mathcal{O}'$. The condition on the rank of $\{dF_i\}_{i\in I}$ implies, by the Jacobian criterion of smoothness
\cite{Eis},
that $\mathcal{O}_a$ and $\mathcal{O}_a'$ are regular local rings at every point $a\in
\Spec(A')$, and $\mathcal{O}_a=\mathcal{O}_a'$. As the two sheafs coincide locally,
they coincide globally. Hence $A'\simeq A$, and $J'=J$.
\end{proof}

Proposition \ref{radical_gen} provides a convenient test for verification if a particular system of equations
gives rise to the defining ideal. That is especially so for homogeneous varieties, as it suffices to look at the initial point only. Remark that
the condition on the rank
can be replaced with a more practical condition on the kernel: $\dim( \cap_{i}\ker dF_i)=\dim(X)$.

First we consider the general linear group, which result was obtained in \cite{P} within a different approach.
\begin{thm}
\label{prop_clas_gl}
Let $\tilde G$ be the general linear group of the vector space $\C^N$ and
let $\tilde O\subset \tilde G$ be the conjugacy class  of diagonalizable matrices with pairwise
distinct eigenvalues
$(\mu_1,\ldots, \mu_l)$ of multiplicities $(n_1,\ldots, n_l)$.
The system of polynomial equations
\be
(A-\mu_1)\ldots (A-\mu_l)=0, \quad \Tr(A^k)=\sum_{i=1}^l n_i \mu_i^k, \quad k=1, \ldots, N
\label{gl_class}
\ee
has rank $N^2-\dim(\tilde O)$ everywhere on $\tilde O$, hence it generates the defining ideal of $\tilde O$.
\end{thm}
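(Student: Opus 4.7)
The plan is to apply Proposition~\ref{radical_gen}, so the task reduces to showing that the rank of the differentials of the system \eqref{gl_class} at each point of $\tilde O$ equals $N^2-\dim(\tilde O)$. Since $\tilde O$ is a conjugation orbit this rank is constant on it, so I would fix the diagonal initial point $o$ whose diagonal has $\mu_i$ repeated $n_i$ times, $i=1,\ldots,l$. The centralizer of $o$ under conjugation is the block-diagonal subgroup $GL(n_1)\times\cdots\times GL(n_l)$, so $\dim\tilde O=N^2-\sum_{i=1}^l n_i^2$, and I must exhibit a common kernel of the differentials of exactly this dimension.

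Set $P(x)=(x-\mu_1)\cdots(x-\mu_l)=\sum_k c_k x^k$. The differential of the matrix equation $P(A)=0$ at $o$ is the linear operator on $\End(\C^N)$ given by
\[
dP(o)(X)=\frac{d}{dt}\Big|_{t=0}P(o+tX)=\sum_k c_k\sum_{j=0}^{k-1}o^j X o^{k-1-j}.
\]
Decomposing $X=(X_{ab})$ into blocks along the eigenspace decomposition of $o$, indexed by $a,b\in\{1,\ldots,l\}$ with corresponding eigenvalues $\mu_a,\mu_b$, the standard divided-difference identity yields $(dP(o)X)_{ab}=P[\mu_a,\mu_b]\,X_{ab}$, where $P[\mu_a,\mu_b]=(P(\mu_a)-P(\mu_b))/(\mu_a-\mu_b)$ for $a\neq b$ and $P[\mu_a,\mu_a]=P'(\mu_a)$. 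Since the $\mu_i$ are pairwise distinct and $P(\mu_i)=0$ for every $i$, the off-diagonal blocks $X_{ab}$, $a\neq b$, lie in the kernel for free, whereas on each diagonal block $dP(o)$ acts as the nonzero scalar $P'(\mu_a)=\prod_{j\neq a}(\mu_a-\mu_j)$, forcing $X_{aa}=0$. Hence $\ker dP(o)$ is the space of matrices with vanishing block diagonal, of dimension $N^2-\sum_i n_i^2$.

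To finish, I would verify that the trace equations contribute no new constraints at $o$: their differentials are $d\Tr(A^k)(X)=k\,\Tr(o^{k-1}X)=k\sum_a \mu_a^{k-1}\Tr(X_{aa})$, which vanishes whenever every diagonal block $X_{aa}$ is zero. Thus the combined differential of \eqref{gl_class} has the same kernel as $dP(o)$, and its rank is $\sum_i n_i^2=N^2-\dim(\tilde O)$, as required by Proposition~\ref{radical_gen}.

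The only delicate point is the diagonalisation of $dP(o)$ by divided differences; once that is in hand, both the dimension count for $\ker dP(o)$ and the automatic vanishing of the trace differentials on this kernel are immediate. It is worth emphasising that the trace conditions, although redundant for the smoothness test at points of $\tilde O$, are essential globally for cutting out $\tilde O$ itself rather than the reducible variety of all matrices whose minimal polynomial divides $P$.
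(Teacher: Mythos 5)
Your proposal is correct and follows essentially the same route as the paper: fix the diagonal point $o$, decompose $\End(\C^N)$ into the blocks determined by the eigenprojectors of $o$, observe that the differential of the minimal polynomial kills the off-diagonal blocks and acts by the nonzero scalar $\prod_{j\neq a}(\mu_a-\mu_j)$ on the diagonal ones, and note that the trace differentials are redundant on this kernel. The only cosmetic difference is that you justify the block computation via the divided-difference identity, where the paper simply states the result of the direct calculation.
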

\begin{proof}
Take $o=\diag(\underbrace{\mu_1,\ldots,\mu_1}_{n_1},\ldots, \underbrace{\mu_l,\ldots,\mu_l}_{n_l})\in \tilde O$ for
 the initial point in
$\tilde O$.
The matrix $o$ can be  written
as $o=\sum_{i=1}^l \mu_i P_i$, where
$P_i \colon \C^N\to \C^{n_i}$ is the diagonal projector to the $\mu_i$-eigenspace of $o$.
 Denote by $E_{ij}$, $i,j=1,\ldots, l$, the
subspace of matrices $P_i \End(\C^{N}) P_j$.
We have $\End(\C^N)=\oplus_{i,j=1}^l E_{ij}$ for the entire matrix algebra, and
$\tilde \k=\oplus_{i=1}^l E_{ii}$ for the Lie algebra $\tilde \k$  of the stabilizer
of $o$. The tangent space $T_o(\tilde G)$ is naturally identified with
$\tilde \m=\oplus_{ i,j=1\atop i\not =j}^l E_{ij}$. The class $\tilde O$ is the zero locus of the
system of equations (\ref{gl_class}).
To prove the statement, it is sufficient to check the rank of the system (\ref{gl_class}) at the  point $o$.

Denote by $F$ the matrix polynomial $\prod_{i=1}^l(A-\mu_i)$ and
by $\vt_k$ the trace $\Tr(A^k)$, $k=1, \ldots, N$.
The system of relations involves  $N\times  N$ matrix entries $F_{ij}$ and $N$ differences
$\vt_k-\sum_{i=1}^l n_i \mu_i^k$.
It is easy to check that
$$
dF(\xi)=0, \quad d\vt_k(\xi)=0,\quad k=1,\ldots, N,
$$
for all $\xi\in E_{ij}$  with $i\not=j$ and
$$
dF(\xi)=\prod_{i=1\atop i\not =j}^l(\mu_j-\mu_i)\>\xi, \quad
d\vt_k(\xi)=k \mu_j^k\Tr(\xi) ,\quad k=1,\ldots, N,
$$
for all $\xi\in E_{jj}$.
Note that the equations with $d\vt_k$ are redundant as
$\ker (dF)\subset \ker d\vt_k$. To see this, one should differentiate the trace of $F(\xi)$.
The left equation  tells us that
$\im dF=\tilde \k$, as the numerical coefficient before $\xi$ does not vanish. This proves the assertion.
\end{proof}

Based on Theorem \ref{prop_clas_gl}, we apply Proposition \ref{radical_gen} to describe the
defining ideals of closed conjugacy classes of the symplectic groups.

\begin{thm}
The system of polynomial relations (\ref{min_pol_cl}) and (\ref{tr_cl}) along
with the defining relations of the group (\ref{ideal_group})  generate the defining ideal of the
class $O\subset SP(2n)$.
\label{prop_clas_sp}
\end{thm}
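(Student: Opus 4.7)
The plan is to apply Proposition~\ref{radical_gen} in the ambient affine space $Y=\End(\C^{2n})$ with $X=O$, using the combined system of (\ref{ideal_group}), (\ref{min_pol_cl}) and (\ref{tr_cl}). By Theorem~\ref{prop_clas_gl}, the relations (\ref{min_pol_cl})--(\ref{tr_cl}) cut out the $GL(2n)$-orbit $\tilde O$ of $o$ inside $\End(\C^{2n})$, while (\ref{ideal_group}) cut out $G$. Set-theoretically the combined system therefore defines $\tilde O\cap G$, which equals $O$: for semisimple elements of $SP(2n)$ the $G$-class is determined by the spectrum with multiplicities, since $C$ pairs $V_\mu$ with $V_{\mu^{-1}}$ non-degenerately and restricts to a symplectic form on $V_{\pm 1}$, so that any such matrix can be brought into the standard form of $o$ by an element of $G$. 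Since the system is conjugation-equivariant and $O$ is homogeneous, it suffices to check the condition of Proposition~\ref{radical_gen} at the single point $o$.

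I will use the kernel formulation of the criterion: one must show that $\dim\bigl(\bigcap_i\ker dF_i|_o\bigr)=\dim O$. The group relations (\ref{ideal_group}) cut out the smooth variety $G$, so the intersection of their kernels at $o$ is the tangent space $T_oG=o\cdot\g$. Under the adjoint action of $o$, the Lie algebra decomposes into eigenvalue blocks $\g=\bigoplus_{a,b}\g_{ab}$ with $\g_{ab}=\g\cap E_{ab}$, and one has $\k=\bigoplus_a\g_{aa}$ and $\m:=\bigoplus_{a\neq b}\g_{ab}$, of dimension $\dim\g-\dim\k=\dim O$.

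For the differentials of (\ref{min_pol_cl}) and (\ref{tr_cl}), I will replay, in the ambient $\g\l(2n)$, the block computation from the proof of Theorem~\ref{prop_clas_gl}, with the list of distinct eigenvalues $\{\mu_1,\ldots,\mu_\ell,-1,1,\mu_\ell^{-1},\ldots,\mu_1^{-1}\}$. For $F(A)=(A-\mu_1)\cdots(A-\mu_1^{-1})$ one finds $dF(\xi)=0$ on every block $E_{ab}$ with $a\neq b$, while $dF(\xi)=\prod_{k\neq a}(\mu_a-\mu_k)\,\xi$ on $E_{aa}$; the scalar is non-zero by the standing assumptions $\mu_i\neq\mu_j^{\pm 1}$ for $i\neq j$ and $\mu_i\neq\pm 1$ for $i\leq\ell$. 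Each trace differential $d\vt_k(\xi)=k\Tr(o^{k-1}\xi)$ automatically vanishes on off-diagonal $\xi$, since $o^{k-1}\xi$ is then off-diagonal and hence traceless. The kernel of the combined differential on the whole of $\End(\C^{2n})$ is thus the off-diagonal part $\bigoplus_{a\neq b}E_{ab}$; its intersection with $T_oG=o\cdot\g$ is $o\cdot\m$, of dimension $\dim O$. Proposition~\ref{radical_gen} then identifies the ideal generated by (\ref{ideal_group}), (\ref{min_pol_cl}) and (\ref{tr_cl}) with the defining ideal of $O$.

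The substantive part of the argument is the block-differential computation, which is already packaged in the proof of Theorem~\ref{prop_clas_gl} and transfers to the symplectic setting simply by intersection with $\g$. The only separate point requiring care is the set-theoretic identity $\tilde O\cap G=O$, which is handled by the symplectic normal-form argument sketched above.
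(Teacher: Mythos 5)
Your strategy coincides with the paper's: apply Proposition~\ref{radical_gen} at the single point $o$ by computing $\ker dH_o\cap\ker dF_o$, reusing the block computation from the proof of Theorem~\ref{prop_clas_gl}. Your separate verification that the zero locus of the combined system is exactly $O$ (via the symplectic normal form) makes explicit a hypothesis of Proposition~\ref{radical_gen} that the paper leaves tacit; that is a welcome addition.

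However, the middle step contains a genuine error: the claim that ``$\g$ decomposes into eigenvalue blocks $\g=\bigoplus_{a,b}\g_{ab}$ with $\g_{ab}=\g\cap E_{ab}$'' is false. The symplectic condition on $\End(\C^{2n})$ is the fixed-point condition for the involution $\si\colon\xi\mapsto -C\xi^tC^{-1}$, and $\si$ maps $E_{ab}$ to $E_{b^*a^*}$, where $a\mapsto a^*$ pairs $\mu_a$ with $\mu_a^{-1}$. Whenever $(a,b)\neq(b^*,a^*)$ one has $\g\cap E_{ab}=\{0\}$, even though $\g\cap(E_{ab}\oplus E_{b^*a^*})\neq\{0\}$; so $\bigoplus_{a,b}(\g\cap E_{ab})$ is a \emph{proper} subspace of $\g$. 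Concretely, for $\s\p(4)$ with regular $o=\diag(\mu_1,\mu_2,\mu_2^{-1},\mu_1^{-1})$, one has $\g\cap E_{aa}=\{0\}$ for every $a$, whereas the centralizer $\k$ is the two-dimensional Cartan subalgebra, and $\bigoplus_{a\neq b}(\g\cap E_{ab})$ is four-dimensional while $\dim O=8$. Consequently the identity $\k=\bigoplus_a\g_{aa}$, the count $\dim\bigl(\bigoplus_{a\neq b}\g_{ab}\bigr)=\dim\g-\dim\k=\dim O$, and the equality $\bigl(\bigoplus_{a\neq b}E_{ab}\bigr)\cap o\g=o\cdot\m$ all fail for your $\m$: the left side equals $o\g\cap\tilde\m$, which is in general strictly larger than $o\cdot\bigoplus_{a\neq b}\g_{ab}$. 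The device the paper uses instead is $o\g=(o\g\cap\tilde\k)\oplus(o\g\cap\tilde\m)$ with $\tilde\k=\bigoplus_aE_{aa}$, $\tilde\m=\bigoplus_{a\neq b}E_{ab}$. That splitting is legitimate because the projectors onto $\tilde\k$ and $\tilde\m$ are polynomial in $\Ad(o)$, hence preserve $\g$, and because left multiplication by $o$ preserves $\tilde\k$ and $\tilde\m$. Replacing your blockwise $\m$ by $\m:=\g\cap\tilde\m$ gives $\dim\m=\dim\g-\dim\k=\dim O$ and your closing paragraph then goes through unchanged.
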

\begin{proof}
As shown in the proof of Theorem \ref{prop_clas_gl}, the differential
on the trace functions is linear dependent of the differential of
the minimal polynomials. Therefore, the essential part of the Jacobian
comes from the minimal polynomial and the equation of the group.
 The tangent space $T_o(G)$ is the set of fixed points
of the linear automorphism  $\si_o\colon \xi\mapsto -o\si(\xi)o$, where $\si$ is the  involutive
algebra anti-automorphism $\xi\mapsto -C \xi^t C$. Clearly
the tangent space can be presented as $o\g$ using the matrix multiplication and
the embedding $\g\subset \End(2n)$.

The automorphism
$\si_o$ is an involution, so the tangent space $T_o(G)$  is the image of the idempotent
 $\frac{\id+\si_o}{2}$.
Using the same notation as in the proof of Theorem \ref{prop_clas_gl}, we
split the tangent space  into the  direct sum $o\g=\k_o\oplus \m_o$,
where $\k_o=o\g\cap \tilde \k$ and $\m_o=o\g\cap \tilde \m$.
This is possible because the spaces $\tilde \k$ and $\tilde \m$ are
stable under multiplication by $o$.

Let $H(A)$ denote the endomorphism $\End(\C^{2n})\to \End(\C^{2n})$, $A\mapsto ACA^tC+1$.
We need to find the rank of the differential of the mapping $\End(\C^{2n}) \to \End(\C^{2n}) \oplus \End(\C^{2n})$,
$A\mapsto H(A)\oplus F(A)$, at the point $A=o$. Equivalently we can find its kernel,
which is the intersection $\ker dH_o\cap \ker dF_o$.
The tangent space $T_o\bigl(\End(\C^{2n})\bigr)$ splits into the direct sum
$o\g^\perp\oplus o\g$. The kernel of  $dH_o$ is exactly $o\g$, hence
 $\ker(dH_o\oplus dF_o)$ is just $\ker dF_o|_{o\g}$.
In the course of the proof of Theorem \ref{prop_clas_gl}  we saw that $\m_o\subset  \ker dF_o|_{o\g}$. This
inclusion is, in fact, an equality.
Indeed, $\k_o\subset \tilde \k$, and  $dF_o$ is injective on $\tilde \k$.
Hence it is injective on $\k_o$. Thus,
the kernel of the differential $dH_o\oplus dF_o$ is exactly $\m_o$.
But $\m_o\simeq T_o(O)$, and the rank of the map
$dH_o\oplus dF_o\colon \End(\C^{2n})\to \End(\C^{2n})\oplus \End(\C^{2n})$ is equal
to the codimension of $G$. This completes the proof.
\end{proof}
Although non-Levi conjugacy classes are of our main concern,
Theorem \ref{prop_clas_sp} holds true for any semi-simple conjugacy class.
It generalizes for the orthogonal groups in the obvious way, with the only stipulation for the $D$-series:
the traces of matrix powers are not enough to fix a class, and one
has to add one more condition on the invariants of $G$, see e.g. \cite{M2}.

\section{Quantum group $U_\hbar\bigl(\s\p(2n)\bigr)$}
\label{secQG}
Recall the definition of the quantum group
$U_\hbar\bigl(\s\p(2n)\bigr)$, which is a deformation of the universal enveloping
algebra $U\bigl(\s\p(2n)\bigr)$ along the formal parameter $\hbar$ in the class of Hopf algebras, \cite{D}.
Let $R$ and  $R^+$ denote respectively the root system and the subset of positive roots of the Lie algebra $\g=\s\p(2n)$.
Let  $\Pi_+=(\al_1,\al_1,\ldots, \al_{n})$ be the set of simple positive roots.
By $(\>.\>,\>.\>)$ we designate the canonical inner form on the linear span of $\Pi^+$.
The set $\Pi^+$ can be  conveniently expressed through an orthogonal basis $(\ve_i)_{i=1}^n$ by
$\al_i=\ve_i-\ve_{i+1}$, $i=1,\ldots, n-1$, $\al_n =2\ve_n$. We reserve the special notation
$\bt$ for the long root $\al_n$.

The inner product establishes
a linear isomorphism between the linear span $\C\Pi^+$ and its dual, $\h$.
We define $h_\la\in \h$
for every $\la\in \h^*=\C\Pi^+$ as the image of $\la$ under this isomorphism:  $\mu(h_\la)=(\la,\mu)$ for all $h\in \h$.
In particular, we set $h_\rho$ for  the half-sum of all positive roots $\rho=\frac{1}{2}\sum_{\al\in \Rm_+}\al $.

The quantum group $U_\hbar(\g)$ is a $\C[\![\hbar]\!]$-algebra generated by the simple root vectors (Chevalley generators)
$e_{\mu}$, $f_{\mu}$,
and the Cartan generators $h_{\mu}\in \h$,
$\mu\in \Pi^+$.
The vector space $\h$  generates the commutative
Cartan  subalgebra $U_\hbar(\h)$ in $U_\hbar(\g)$. The elements $h_\mu$  obey the following commutation
relations with $e_{\nu}$, $f_{\nu}$:
$$
[h_{\mu},e_{\nu}]= (\mu,\nu) e_{\nu},
\quad
[h_{\mu},f_{\nu}]= -(\mu,\nu) f_{\nu},
\quad \mu,\nu\in \Pi^+.
$$
In these formulas, the only non-zero inner products are
$$
(\al_i,\al_i)=2, \quad (\al_{i-1},\al_i)=-1,
\quad
(\bt,\bt)=4, \quad (\bt,\al_{n-1})=-2,
$$
where $i$ takes all admissible values in the range $1,\ldots,n-1$.
Note that the Cartan generators $h_\mu$ are different from those of \cite{ChP}, which are obtained
from $h_\mu$ via division by $\frac{(\mu,\mu)}{2}$.

The positive  and negative Chevalley generators commute to $U_\hbar(\h)$:
$$
[e_{\mu},f_{\nu}]=\delta_{\mu,\nu} \frac{q^{h_{\mu}}-q^{-h_{\mu}}}{q_\mu-q^{-1}_\mu},\quad \mu \in \Pi^+,
$$
where $q_\mu= q=e^\hbar$ for $\mu\not =\bt$ and $q_\bt= q^2$.

The non-adjacent positive Chevalley generators commute. The adjacent generators satisfy
the Serre relations
$$
e_{\mu}^{2}
e_{\nu}
-
(q+q^{-1})
e_{\mu}e_{\nu}e_{\mu}
+
e_{\nu}e_{\mu}^{2}=0, \quad\mbox{for }\quad \mu,\nu\not =\bt,  \quad\mbox{and }
$$
$$
e_{\bt}^{2}
e_{\mu}
-
(q^2+q^{-2})
e_{\bt}e_{\mu}e_{\bt}
+
e_{\mu}e_{\bt}^{2}=0,
$$$$
\quad
e_{\mu}^{3}
e_{\bt}
-
(q^2+1+q^{-2})e_{\mu}^{2}
e_{\bt}e_{\mu}
+
(q^2+1+q^{-2})e_{\mu}
e_{\bt}e_{\mu}^{2}
-
e_{\bt}e_{\mu}^{3}
=0
$$
for $\mu=\al_{n-1}$.
Similar relations holds for the negative Chevalley generators $f_\mu$.

The  involution $\omega\colon e_\mu\leftrightarrow f_\mu$ and  $\omega(h_\mu)=-h_\mu$, $\mu\in \Pi^+$, extends to an algebra
automorphism of $U_\hbar(\g)$

The comultiplication $\Delta$ and antipode $\gm$ are defined on the generators by
$$
\Delta(h_\mu)=h_\mu\tp 1+1\tp h_\mu, \quad \gm(h_\mu)=-h_\mu,
$$
$$
\Delta(e_\mu)=e_\mu\tp 1+q^{h_\mu}\tp e_\mu, \quad \gm(e_\mu)=-q^{-h_\mu}e_\mu,
$$
$$
\Delta(f_\mu)=f_\mu\tp q^{-h_\mu}+1\tp f_\mu, \quad \gm(f_\mu)=-f_\mu q^{h_\mu},
$$
for all $\mu\in \Pi_+$.
The counit homomorphism $\ve\colon U_\hbar(\g)\to \C[\![\hbar]\!]$ annihilates $e_\mu$, $f_\mu$, $h_\mu$.
Note that we use the opposite coalgebra structure on $U_\hbar(\g)$ as compared to \cite{ChP}.

Besides the Cartan subalgebra $U_\hbar(\h)$, the quantum group $U_\hbar(\g)$ contains the following Hopf subalgebras.
 The positive and negative
Borel subalgebras $U_\hbar(\b^\pm)$ are generated over $U_\hbar(\h)$ by
$\{e_\mu\}_{\mu\in \Pi^+}$ and $\{f_\mu\}_{\mu\in \Pi^+}$, respectively. For any Levi subalgebra $\l\subset \g$
corresponding to a subset $\Pi^+_\l\subset \Pi^+$, the universal enveloping algebra $U(\l)$ is quantized to a Hopf subalgebra
$U_\hbar(\l)\subset U_\hbar(\g)$, along with the parabolic subalgebras $U_\hbar(\p^\pm)$ generated by $U_\hbar(\b^\pm)$
over $U_\hbar(\l)$.

The triangular decomposition $\g=\n_\l^-\op \l \op \n_\l^+$ gives rise to the factorization
\be
\label{tr_fac}
U_\hbar(\g)=U_\hbar(\n_\l^-) U_\hbar(\l)U_\hbar(\n_\l^+),
\ee
where $U_\hbar(\n_\l^\pm)$ are certain subalgebras in $U_\hbar(\b^\pm)$, \cite{Ke}.
This factorization makes $U_\hbar(\g)$ a free $U_\hbar(\n_\l^-)-U_\hbar(\n_\l^+)$-bimodule generated by $U_\hbar(\l)$.
In the special case of this decomposition relative to $\l=\h$, we use the notation
$U_\hbar(\g_\pm)=U_\hbar(\n^\pm_\h)$. Note that, contrary to the classical situation,
$U_\hbar(\n^\pm_\l)$ are not Hopf subalgebras in $U_\hbar(\g)$.

We shall also deal with the  Hopf subalgebra $U_q(\g)\subset U_\hbar(\g)$ generated by the
simple root vectors and the exponentials $t^{\pm}_{\al_i}=q^{\pm h_{\al_i}}$, $\al_i\in \Pi_+$.
Let us stress that we regard $U_q(\g)$ as a $\C[\![\hbar]\!]$-algebra.
The other mentioned subalgebras of $U_\hbar(\g)$ have their counterparts in $U_q(\g)$, and
we use the subscript $q$ for their notation. The roles of quantum groups $U_\hbar(\g)$ and $U_q(\g)$
are different in what follows. While $U_q(\g)$ is a source of  non-commutative
functions on quantum geometric spaces, $U_\hbar(\g)$ is a measure of their symmetry. This difference
is somewhat camouflaged in the classical geometry but becomes more distinctive in quantum.

\section{Quantum subgroup $U_\hbar\bigl(\g\l(n)\bigr)$}
The quantum group $U_\hbar\bigl(\s\p(2n)\bigr)$ contains the quantum subgroup $U_\hbar\bigl(\g\l(n)\bigr)$
corresponding to the positive simple roots $(\al_1,\ldots, \al_{n-1})\subset \Pi^+$. We need a few technical facts about
this subalgebra, which are used in the sequel.

Fix a pair of integers $i,j$ such that $i<j<n$ and put $\mu=\al_i+\ldots +\al_j\in R^+$. Along with $(\al_1,\ldots, \al_{n-1})$,
such $\mu$ exhaust all of positive roots of $\g\l(n)$. The integer $j-i+1$ is called  height of the root $\mu$
and denoted by $\mathrm{ht}(\mu)$.
Define elements $f_{\mu},\tilde f_{\mu}\in U_\hbar\bigl(\g\l(n)\bigr)$ by
$$
f_{\mu}=[f_{\al_{i}},\ldots [f_{\al_{j-1}},f_{\al_{j}}]_q]_q\ldots ]_q,\quad
\tilde f_{\mu}=[f_{\al_{i}},\ldots [f_{\al_{j-1}},f_{\al_{j}}]_{q^{-1}}]_{q^{-1}}\ldots ]_{q^{-1}},
$$
were  $[x,y]_a$ designates  the combination $xy-ayx$ with a scalar $a$.
It is also convenient to put  $\tilde f_{\al_i}=f_{\al_i}$ for the simple roots $\al_i$, $i=1,\ldots, n-1$.
Here are some commutation relations involving these root vectors.
\begin{lemma}
\label{lem_aux1}
Let $\mu=\al_i+\ldots+ \al_j$ and $f_\mu$ and $f_{\tilde\mu}$ be as above. Suppose the integer $k$ is such that $i< k <j$. Then
$
[e_{\al_k},f_{\mu}]=0,\quad [e_{\al_k},\tilde f_{\mu}]=0.
$
Further,
$$
[e_{\al_i},f_{\mu}]=f_{\mu'}q^{-h_{\al_{i}}}
,\quad
[e_{\al_j},f_{\mu}]=-q f_{\mu''}q^{h_{\al_{j}}}
,\quad
[e_{\al_i},\tilde f_{\mu}]=
\tilde f_{\mu'}q^{h_{\al_{i}}}
,\quad
[e_{\al_j},\tilde f_{\mu}]=-q^{-1} \tilde f_{\mu''}q^{-h_{\al_{j}}},
$$
where $\mu'=\al_{i+1}+\ldots+\al_{j}$ and $\mu''=\al_{i}+\ldots+\al_{j-1}$.
\end{lemma}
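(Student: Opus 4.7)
The plan is to proceed by induction on the height $\mathrm{ht}(\mu) = j - i + 1$, handling the statements for $f_\mu$ in detail; those for $\tilde f_\mu$ then follow by the same argument with $q$ systematically replaced by $q^{-1}$, since all the relevant structural data (the $q$-bracket definition, the Chevalley relation $[e_{\al_k}, f_{\al_l}] = \dt_{kl}\frac{q^{h_{\al_l}} - q^{-h_{\al_l}}}{q - q^{-1}}$, and the Cartan shifts $q^{\pm h_{\al_k}} f_\nu = q^{\mp(\al_k,\nu)} f_\nu q^{\pm h_{\al_k}}$) are manifestly invariant under this substitution. The structural lever is the left-nested recursion $f_\mu = [f_{\al_i}, f_{\mu_1}]_q$ with $\mu_1 = \al_{i+1} + \ldots + \al_j$ of strictly smaller height and of the same shape. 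Since $e_{\al_k}$ acts almost as a derivation on a $q$-commutator, any $[e_{\al_k}, f_\mu]$ unfolds into four terms governed by $[e_{\al_k}, f_{\al_i}]$ and $[e_{\al_k}, f_{\mu_1}]$, which the inductive hypothesis resolves; the remaining task is then to push the accumulating Cartan exponentials into their final resting place and verify that the scalars assemble correctly.

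The base case $\mathrm{ht}(\mu) = 2$ is verified by direct expansion from the definition $f_\mu = f_{\al_i} f_{\al_{i+1}} - q f_{\al_{i+1}} f_{\al_i}$ (the interior-$k$ statement is vacuous). In the inductive step, the case $k = i$ is the cleanest: $e_{\al_i}$ commutes with every simple vector $f_{\al_l}$ for $l \geq i+1$ and hence with $f_{\mu_1}$, so only the $[e_{\al_i}, f_{\al_i}]$ contributions survive, and a short simplification using $(\al_i, \mu_1) = -1$ yields $f_{\mu'} q^{-h_{\al_i}}$. For interior $k$ with $k > i+1$, both $[e_{\al_k}, f_{\al_i}] = 0$ (distinct simple roots) and $[e_{\al_k}, f_{\mu_1}] = 0$ (by applying the interior-$k$ induction clause to $\mu_1$), so $[e_{\al_k}, f_\mu] = 0$. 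The delicate interior case is $k = i+1$: induction produces $[e_{\al_{i+1}}, f_{\mu_1}] = f_{\mu_1'} q^{-h_{\al_{i+1}}}$ with $\mu_1' = \al_{i+2} + \ldots + \al_j$, and after expanding the outer $q$-commutator and pulling the Cartan factor across $f_{\al_i}$ the result collapses to $[f_{\al_i}, f_{\mu_1'}] q^{-h_{\al_{i+1}}}$; this vanishes because $f_{\al_i}$ commutes with each $f_{\al_l}$ for $l \geq i+2$ by the non-adjacency clause of the paper's Serre data, hence with the polynomial $f_{\mu_1'}$. For $k = j$, induction supplies $[e_{\al_j}, f_{\mu_1}] = -q f_{\mu_1''} q^{h_{\al_j}}$ with $\mu_1'' = \al_{i+1} + \ldots + \al_{j-1}$; substituting, using $(\al_j, \al_i) = 0$ (valid since $j \geq i+2$ in the inductive step) to commute the Cartan factor past $f_{\al_i}$, and recognising the residual combination as $[f_{\al_i}, f_{\mu_1''}]_q = f_{\mu''}$, one obtains the required $-q f_{\mu''} q^{h_{\al_j}}$.

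The main obstacle is not conceptual but arithmetical: each Cartan exponential $q^{\pm h_{\al_k}}$ accumulates a scalar prefactor $q^{\mp(\al_k,\nu)}$ each time it traverses a root vector $f_\nu$, and the stated formulas require these prefactors together with the explicit $q$-brackets from the Chevalley relation to assemble into exactly $1$, $-q$, $q^{-h_{\al_i}}$, or $q^{h_{\al_j}}$, with no leftover $q$-powers. Organising the induction around the \emph{left}-nested form $f_\mu = [f_{\al_i}, f_{\mu_1}]_q$ (rather than a right-nested one) is what makes the $k = i$ and $k = j$ cases fall out as mirror images and keeps the Cartan bookkeeping tractable; once this is checked, the $\tilde f_\mu$ statements are obtained essentially for free by the $q \leftrightarrow q^{-1}$ symmetry noted above.
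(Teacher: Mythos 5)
Your proof is correct. It does, however, differ organizationally from the paper's: where you run a strict induction on $\mathrm{ht}(\mu)$ using only the defining left-nested recursion $f_\mu = [f_{\al_i}, f_{\mu_1}]_q$, the paper instead reassociates the nested $q$-bracket to pinpoint the single relevant Chevalley factor in place --- namely $f_\mu = [f_{\mu_1},[f_{\al_k},f_{\mu_2}]_q]_q$ for interior $k$, and $f_\mu = [f_{\mu''},f_{\al_j}]_q$ for the right endpoint --- then computes $[e_{\al_k},f_\mu]$ directly by replacing $f_{\al_k}$ with $[e_{\al_k},f_{\al_k}]$ and pushing the resulting Cartan factor through. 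The paper's route is shorter but takes the reassociation identity as an ``elementary corollary of the definition'' (it follows from the Jacobi identity (\ref{Jacobi}) together with commutativity of non-adjacent simple root vectors); your inductive route avoids invoking any reassociation, trading that for a case split at $k=i$, $k=i+1$, interior $k>i+1$, and $k=j$, each of which you resolve cleanly, including the base-case check at height two. Both arguments deploy the same two ingredients --- the Chevalley commutation $[e_{\al_k},f_{\al_k}] = (q^{h_{\al_k}}-q^{-h_{\al_k}})/(q-q^{-1})$ and the Cartan shift rule --- and both correctly observe that the $\tilde f_\mu$ claims follow by the $q \leftrightarrow q^{-1}$ symmetry; yours is perhaps preferable if one wants a fully self-contained argument, while the paper's is faster once the reassociation lemma is granted.
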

\begin{proof}
It is sufficient to check only the group of equalities involving $f_\mu$, as the
equalities with $\tilde f_\mu$ can be obtained by the formal
replacement $q\to q^{-1}$.
Let us start with  the special case of $k=i+1$, $j=i+2$:
$$
[e_{\al_{i+1}},f_{\mu}]
\sim [f_{\al_{i}},[q^{h_{\al_{i+1}}}-q^{-h_{\al_{i+1}}},f_{\al_{i+2}}]_q]_q\sim
[f_{\al_{i}},f_{\al_{i+2}}q^{-h_{\al_{i+1}}}]_q=[f_{\al_{i}},f_{\al_{i+2}}]q^{-h_{\al_{i+1}}}=0.
$$
The general case is verified in a similar way based on the formula
$f_\mu=[f_{\mu_1},[f_{\al_k},f_{\mu_2}]_q]_q$, where  $\mu_1=\al_{i}+\ldots +\al_{k-1}$
and $\mu_2=\al_{k+1}+\ldots +\al_{j}$.
 This formula is an elementary corollary of the definition of $f_\mu$.
Further,
\be
[e_{\al_i},f_{\mu}]&=&[\frac{q^{h_{\al_{i}}}-q^{-h_{\al_{i}}}}{q-q^{-1}},f_{\mu'}]_q=-\frac{1}{q-q^{-1}}[q^{-h_{\al_{i}}},f_{\mu'}]_q=
f_{\mu'}q^{-h_{\al_{i}}},
\nn\\[1pt]
[e_{\al_j},f_{\mu}]&=&[f_{\mu''},\frac{q^{h_{\al_{i}}}-q^{-h_{\al_{i}}}}{q-q^{-1}}]_q=\frac{1}{q-q^{-1}}[f_{\mu''},q^{h_{\al_{j}}}]_q
=\frac{(1-q^2)}{q-q^{-1}}f_{\mu''}q^{h_{\al_{j}}}=-q f_{\mu''}q^{h_{\al_{j}}},
\nn
\ee
as required.
\end{proof}
In particular, a positive Chevalley generator $e_\al$ commutes with $f_\mu$,  $\mathrm{ht}(\mu)>1$, unless
$\mu-\al$ is a root. If $\mu-\al$ is a root, then $\al$ is either the smallest or the greatest simple root
in $\mu$ (under the natural ordering of $\{\al_i\}_{i=1}^{n-1}$). The commutator $[e_\al,f_\mu]$ includes the
factor
$f_{\mu-\al}$ in both cases, but we find it convenient to keep reference to the position
of $\al$ within $\mu$  using notation $\mu'$ and $\mu''$.
\begin{lemma}
Suppose $\mu=\al_i+\ldots+ \al_j$ and  $g$ is a monomial (word)  in the simple root vectors $\{f_{\al_k}\}_{k=i}^j$
that contains $f_{\al_i}$ and $f_{\al_j}$ at most once.
Then
\begin{enumerate}
\item $[g,f_\mu]=0$ if both $f_{\al_i}$ and $f_{\al_j}$ enter $g$ or none,
\item $[g,f_\mu]_{q^{-1}}=0$ if $g$ contains only $f_{\al_i}$,
\item $[g,f_\mu]_{q}=0$ if $g$ contains only  $f_{\al_j}$.
\end{enumerate}
In particular,
$
[\tilde f_\mu,f_\mu]=0,
$
$
[\tilde f_{\mu'}, f_\mu]_q=0,$
and
$
[\tilde f_{\mu''}, f_\mu]_{q^{-1}}=0.
$
\label{[tilde,nontilde]}
\end{lemma}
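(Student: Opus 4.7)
My plan is to handle the three commutation formulas uniformly. Assign to an admissible monomial $g$ a weight $w(g) \in \{1, q, q^{-1}\}$ according to whether $g$ contains: neither or both of $f_{\al_i}, f_{\al_j}$ (then $w = 1$), only $f_{\al_i}$ ($w = q^{-1}$), or only $f_{\al_j}$ ($w = q$). The lemma then collapses to the single assertion $[g, f_\mu]_{w(g)} = 0$. Since each endpoint enters $g$ at most once, any splitting $g = g_1 g_2$ partitions the endpoints consistently between the factors, and a direct check on cases gives the multiplicativity $w(g_1 g_2) = w(g_1) w(g_2)$.

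The inductive step on the length of $g$ is handled by the $q$-Leibniz identity
\[
[g_1 g_2, x]_{ab} = g_1 [g_2, x]_a + a [g_1, x]_b \, g_2,
\]
an elementary expansion valid for any scalars $a,b$. Taking $a = w(g_2)$, $b = w(g_1)$ and invoking the inductive hypothesis on each factor reduces everything to the base case $g = f_{\al_k}$ for a single $k \in \{i,\ldots,j\}$, which I would prove by a secondary induction on $\mathrm{ht}(\mu)$.

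For the base case $k = i$, writing $f_\mu = [f_{\al_i}, f_\nu]_q$ with $\nu = \al_{i+1} + \ldots + \al_j$ turns $[f_{\al_i}, f_\mu]_{q^{-1}}$ into the Serre polynomial $f_{\al_i}^2 f_\nu - (q+q^{-1}) f_{\al_i} f_\nu f_{\al_i} + f_\nu f_{\al_i}^2$; using the commutativity of $f_{\al_i}$ with $f_{\al_\ell}$ for $\ell \geq i+2$ and the decomposition $f_\nu = [f_{\al_{i+1}}, f_{\nu'}]_q$, this reduces by an inner induction to the genuine Serre relation between $f_{\al_i}$ and $f_{\al_{i+1}}$. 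The case $k = j$ is symmetric, based on the dual decomposition $f_\mu = [f_\xi, f_{\al_j}]_q$, $\xi = \al_i + \ldots + \al_{j-1}$, which one establishes by a short $q$-Jacobi regrouping using that $f_{\al_i}$ commutes with $f_{\al_j}$.

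The hard case, and the expected main obstacle, is $i < k < j$, where there is no grading to exploit. Here I would apply the ungraded Jacobi identity $[a, [b, c]_q] = [[a, b], c]_q + [b, [a, c]]_q$ with $a = f_{\al_k}$, $b = f_{\al_i}$, $c = f_\nu$: whenever $k \geq i+2$, the first term vanishes because $[f_{\al_k}, f_{\al_i}] = 0$, and the second vanishes by the inner inductive hypothesis on $\nu$. The boundary subcase $k = i+1$ with $\mathrm{ht}(\mu) \geq 4$ is treated symmetrically via the dual decomposition, where $k = i+1$ is still strictly interior to $\xi$. The sole irreducible small case, $\mathrm{ht}(\mu) = 3$ with $k = i+1 = j-1$, I would dispatch by a direct computation using both Serre relations among $f_{\al_i}, f_{\al_{i+1}}, f_{\al_{i+2}}$. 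The three explicit corollaries follow on inspection: $\tilde f_\mu$ is a sum of monomials each containing $f_{\al_i}$ and $f_{\al_j}$ exactly once (case 1), $\tilde f_{\mu'}$ contains only $f_{\al_j}$ (case 3), and $\tilde f_{\mu''}$ contains only $f_{\al_i}$ (case 2).
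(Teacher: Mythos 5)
Your proposal is correct and follows the same skeleton as the paper's own proof: reduce the general statement to the three single-letter base cases $g=f_{\al_k}$ via a $q$-Leibniz expansion that multiplies the weights, then verify the base cases from the quantum Serre relations and non-adjacency. The paper is far terser — it simply records the base cases (citing \cite{M4} for the interior $k$ and quoting the ``higher order Serre relations'' $f_{\al_i}f_\mu=q^{-1}f_\mu f_{\al_i}$, $f_{\al_j}f_\mu=q\,f_\mu f_{\al_j}$) and declares that they ``readily imply the statement'' — so your multiplicative weight bookkeeping and the double induction for the base cases are exactly the details the paper leaves implicit.
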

\begin{proof}
It is known that $f_{\al_k}$ commutes with $\tilde f_{\gm}$ if $i<k<j$, see e.g. \cite{M4}.
Further, the "higher order Serre relations"
\be
f_{\al_i}f_\mu&=&f_{\al_i}[f_{\al_i},f_{\mu'}]_q=q^{-1}[f_{\al_i},f_{\mu'}]_qf_{\al_i}=q^{-1}f_{\mu}f_{\al_i},
\nn\\
f_{\al_j}f_\mu&=&f_{\al_j}[f_{\mu''},f_{\al_j}]_q=q[f_{\mu''},f_{\al_j}]_q f_{\al_j}=qf_\mu f_{\al_j}.
\nn
\ee
in $U_\hbar\bigl(\g\l(n)\bigr)$ readily imply the statement.
\end{proof}

\section{Generalized Verma module $M_\la$}
\label{secGVMM}
We need to set up a few conventions about representations of quantum groups.
We assume that they are free modules over the ring of scalars, and their rank will
be referred to as  dimension.
We call a $U_\hbar(\g)$-module irreducible if it is irreducible over $U(\g)$ in the classical limit.
As $U_q(\g)$ and $U_\hbar(\g)$ have different Cartan subalgebras, their sets of weights are different.
Still we prefer to use additive language for $U_q(\g)$-weights, which are then parameterized
by the assignment $\la\mapsto q^\la$.

We shall be dealing with weight modules over $U_\hbar(\g)$ generated by a weight vector
$v$ annihilated by  the positive
Chevalley generators. Under our convention, all weights in such modules belong to $-\Z\Pi^++\la$, where
$\la\in \h^*[\![\hbar]\!]$ is the highest weight supported by $v$. However, such representations are not sufficient for us,
because they yield $\lim_{q\to 1} q^{2 \la}=1$. In our  further constructions,
$\lim_{q\to 1} \diag(q^{2(\la,\ve_1)},\ldots ,q^{2(\la,\ve_n)},q^{-2(\la,\ve_n)},\ldots ,q^{-2(\la,\ve_1)})\in G$ is
 the initial point of  the conjugacy class under study. Therefore, we should include highest weights
from $\hbar^{-1}\h^*[\![\hbar]\!]$. These are actually weights  of $U_q(\g)$
as they are well defined on $q^{\pm h_{\al_i}} \in U_q(\g)$.
It is sufficient for our purposes to restrict $\la$ to $\hbar^{-1}\h^*\oplus \h^*\subset \hbar^{-1}\h^*[\![\hbar]\!]$.

The presence of the $\hbar$-irregular term in  $\hbar^{-1}\h^*\op \h^*$ implies that the representation of $U_q(\g)$ on
such a module, $V$, cannot be extended to a representation of  $U_\hbar(\g)$. We have to extend $V$ over the field of Laurent
series $\C(\!(\hbar)\!)$ (bounded from below), where the action of $U_\hbar(\g)$ can
be defined. The corresponding extension $\End(V)(\!(\hbar)\!)$ contains $\End(V)$ as a $\C[\![\hbar]\!]$-submodule. It turns out that the locally finite part of $\End(V)$, which is
still a $\C[\![\hbar]\!]$-submodule, does admit a $U_\hbar(\g)$-action, cf. Section \ref{secQCC}.

Let $L\subset K$ denote the Levi subgroup
$$
L=GL(n_1)\times \ldots \times GL(n_\ell)\times GL(m)\times SP(2p)
$$
for  $K$ as in  (\ref{gr_K}).
The difference between $L$ and $K$ is only one Cartesian factor $GL(m)\subset SP(2m)$. By $\l$ we denote
the Lie algebra of $L$. It is a reductive subalgebra in $\g$ of maximal rank $n$.

By $\c_\l\subset \h$ we denote the centre of $\l$.
In the presence of the canonical inner product on $\h^*$, we identify its dual $\c_\l^*$ with the subspace in $\h^*$
which is orthogonal to the annihilator of $\c_\l$.
Any element $\la\in \hbar^{-1} \c_\l^*\op  \c_\l^*$ defines
a one-dimensional representation of $U_q(\l)$ denoted by  $\C_\la$.
It assigns nil to the Chevalley generators of $U_q(\l)$ and acts by $q^{h_\al}\mapsto q^{(\la,\al)}$ on the Cartan subalgebra.
This representation extends to the parabolic subalgebra $U_q(\p^+)$ as trivial on
 $U_q(\b^+)\subset U_q(\p^+)$.
Denote by $\hat M_\la=U_q(\g)\tp_{U_q(\p^+)}\C_\la$ the parabolic Verma $U_q(\g)$-module induced from $\C_\la$.
It is generated by the highest weight vector, which we denote by $v$.
We impose the condition  on $\la$   that $\hat M_\la$ admits a singular vector of weight $-\dt+\la$, where
$\dt=2\al_{n-p}+\ldots+ 2\al_{n-1}+\bt\in R^+$. Such a vector generates a submodule $\hat M_{\la-\dt}\subset \hat M_{\la}$.
The quotient module $\hat M_\la/\hat M_{\la-\dt}$ is the subject of our interest.

We consider $\hat M_\la/\hat M_{\la-\dt}$ as a candidate in order to realize  the quantization
of $G/K$ in $\End(\hat M_\la/\hat M_{\la-\dt})$, by the analogy with the Levi class $G/L$.
The tangent space to  $G/L$ at the initial point is naturally identified with $\n^+_\l\oplus \n^-_\l$.
 The locally finite part of $\End(\hat M_\la)$ is isomorphic to $\hat M_\la^*\tp \hat M_\la$,
 where $\hat M_\la^*$ is the dual parabolic Verma module $U_q(\g)\tp_{U_q(\p^-)}\C_{-\la}$.
For generic $\la$, a quantization of $G/L$ can be realized in $\End(\hat M_\la)$,
\cite{M2}. As a vector space, $\End(\hat M_\la)$ is isomorphic to $ U_\hbar(\n^+_\l)\tp U_\hbar(\n^-_\l)$,
and the functional dimension of $\hat M_\la$ is equal to $\dim \n^-_\l=\frac{1}{2}\dim G/L$.
 The tangent space at the initial point
 of $G/K$ is transversal to   $\m^-\oplus \m^+$, where $\m^-=\ad(\l)(f_\dt)$ and
 $\m^+=\ad(\l)(e_\dt)$. It is presentable as
 $\n^+_\k\oplus \n^-_\k$ on setting $\n^\pm_\k=\n^\pm_\l \ominus \m^\pm$. A
 module that supports quantization
 of $G/K$ should have the functional dimension $\dim \n^-_\k=\frac{1}{2}\dim G/K$.
Such is the quotient $\hat M_\la/\hat M_{\la-\dt}$,  where the  vector $f_\dt$  vanishes along with a
$q$-version of $\m^-$,
cf. Section \ref{SecPBW}.

For the sake of technical convenience, we assume that $\ell=0$, $m=1$, $n=1+p$. This restriction will be relaxed later on.
In this setting, the root $\al_1$ is distinguished, as $f_{\al_1}$ is the only negative
Chevalley generator which does not belong to $U_q(\l)$ and does not kill the highest weight vector  $v\in \hat M_\la$.

\begin{lemma}
\label{pre_dt'}
For all $i=1,\ldots,p$ and  $k=2,\ldots ,p+1$, $k\not = i+1$, the vector $f_{\al_k} f_{\al_i}\stackrel{>}{\ldots} f_{\al_1}v\in \hat M_\la$
is nil.
\end{lemma}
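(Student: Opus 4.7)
The plan is to reduce the statement to the single identity $f_{\al_k} f_{\mu_i} v = 0$, where $\mu_i := \al_1 + \cdots + \al_i \in R^+$, and then invoke Lemma~\ref{[tilde,nontilde]}. In the present setting $\ell = 0$, $m = 1$, $n = 1 + p$, the Levi $\l = \g\l(1) \op \s\p(2p)$ has simple roots $\Pi^+_\l = \{\al_2, \ldots, \al_{p+1}\}$, so the highest-weight generator $v$ of $\hat M_\la$ satisfies $f_{\al_k} v = 0$ for all $k \in \{2, \ldots, p+1\}$.

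\emph{Step 1: $w_i := f_{\al_i} f_{\al_{i-1}} \cdots f_{\al_1} v$ is a nonzero scalar multiple of $f_{\mu_i} v$.} Write $T_j := [f_{\al_j}, [f_{\al_{j+1}}, \ldots, [f_{\al_{i-1}}, f_{\al_i}]_q \ldots ]_q]_q$, so $T_i = f_{\al_i}$ and $f_{\mu_i} = T_1$. Since every monomial in the PBW expansion of $T_j$ (for $j \geq 2$) has some $f_{\al_k}$, $k \geq 2$, as its rightmost factor, we have $T_j v = 0$. Expanding $T_j = f_{\al_j} T_{j+1} - q T_{j+1} f_{\al_j}$ and using the non-adjacency commutations $[f_{\al_k}, f_{\al_l}] = 0$ for $|k-l| \geq 2$ to push $T_{j+1}$ past $f_{\al_{j-1}}, \ldots, f_{\al_1}$ and annihilate against $v$, an outside-in induction on $j$ yields
$$
T_j f_{\al_{j-1}} \cdots f_{\al_1} v = (-q)^{i-j} w_i, \qquad j = 2, \ldots, i,
$$
whence $f_{\mu_i} v = T_1 v = -q\, T_2 f_{\al_1} v = (-q)^{i-1} w_i$.

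\emph{Step 2: $f_{\al_k} f_{\mu_i} v = 0$ in the two cases.}

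\emph{Case $k \geq i+2$.} The generator $f_{\al_k}$ is non-adjacent in the Dynkin diagram of $\g$ to every $\al_j$ in the support $\{\al_1, \ldots, \al_i\}$ of $\mu_i$: both $k \pm 1$ exceed $i$, and in the boundary sub-case $k = p+1$ (which forces $i \leq p-1$) the long root $\bt$ is adjacent only to $\al_p$, which lies outside the support. Hence $[f_{\al_k}, f_{\mu_i}] = 0$, so $f_{\al_k} f_{\mu_i} v = f_{\mu_i} f_{\al_k} v = 0$.

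\emph{Case $2 \leq k \leq i$.} I would apply Lemma~\ref{[tilde,nontilde]} with $\mu = \mu_i$, whose endpoints in the lemma's notation are $\al_1$ and $\al_i$. If $k = i$, the monomial $g = f_{\al_i}$ contains only the upper endpoint, so part~(3) yields $[f_{\al_i}, f_{\mu_i}]_q = 0$; if $2 \leq k \leq i-1$, then $g = f_{\al_k}$ contains neither endpoint, and part~(1) yields $[f_{\al_k}, f_{\mu_i}] = 0$. In either subcase, $f_{\al_k} f_{\mu_i} v$ reduces to a scalar multiple of $f_{\mu_i} f_{\al_k} v$, which vanishes since $k \geq 2$.

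The main technical point is Step~1 -- the explicit collapse of $f_{\mu_i} v$ onto the single surviving monomial $w_i$. Given this, Case~2 is a direct application of the previously established Lemma~\ref{[tilde,nontilde]}, and Case~1 is the standard non-adjacency observation in the Dynkin diagram.
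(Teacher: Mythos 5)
Your proof is correct, and it takes a genuinely different route than the paper's. The paper argues by a direct case-by-case induction on $i$, invoking the Serre relations of $U_q\bigl(\g\l(n)\bigr)$ to rearrange $f_{\al_k}f_{\al_i}\cdots f_{\al_1}v$ until $v$ is killed; your argument instead first identifies the tail $w_i=f_{\al_i}\cdots f_{\al_1}v$ with (an invertible multiple of) the single root vector $f_{\mu_i}v$, and then reduces the lemma to the commutation $[f_{\al_k},f_{\mu_i}]_a v=0$, which follows from the already-established Lemma~\ref{[tilde,nontilde]} (or plain non-adjacency for $k\geq i+2$). I verified both steps: the descending recursion $T_j f_{\al_{j-1}}\cdots f_{\al_1}v=(-q)^{i-j}w_i$ is valid because the first term $f_{\al_j}T_{j+1}f_{\al_{j-1}}\cdots f_{\al_1}v$ vanishes by pushing $T_{j+1}$ (built from $f_{\al_{j+1}},\dots,f_{\al_i}$) past $f_{\al_{j-1}},\dots,f_{\al_1}$; and the Dynkin-diagram boundary sub-case $k=p+1$, $\al_k=\bt$, is correctly handled since $\bt$ is adjacent only to $\al_p$, outside the support of $\mu_i$. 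Your approach is arguably cleaner --- it replaces the paper's three-case combinatorial rearrangement by a single structural observation that the whole weight space $\hat M_\la[\la-\mu_i]$ is a line through $f_{\mu_i}v$, and it factors all the commutator bookkeeping through one prior lemma rather than redoing it. The paper's approach, by contrast, is self-contained modulo the Serre relations and does not depend on Lemma~\ref{[tilde,nontilde]}, so it is slightly more modular in terms of logical dependencies. One very minor terminological quibble: in Step~1 the phrase ``PBW expansion of $T_j$'' should simply read ``expansion of $T_j$ into Chevalley monomials'' --- $T_j$ is not being written in a PBW basis, just as a noncommutative polynomial in the $f_{\al_l}$ --- but your meaning and the argument are clear.
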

\begin{proof}
We prove it by induction on $i$. The case $i=1$ is obvious,
as $f_{\al_k}$ commutes with $f_{\al_1}$ for all $k=3,\ldots, p+1$. Suppose we have done
it for all $i=1,\ldots, l-1$ and let us prove it for $i=l$. Consider the following three cases first.
\begin{itemize}
\item[a)] $k\geqslant l+2$. The generator $f_{\al_k}$ can be freely pushed to the right until it kills $v$.
\item[b)] $k=l$. Plugging $f_{\al_l}^2f_{\al_{l-1}}=(q+q^{-1})f_{\al_l}f_{\al_{l-1}}f_{\al_l}-f_{\al_{l-1}}f_{\al_l}^2$
into $f_{\al_l} f_{\al_l}\stackrel{>}{\ldots} f_{\al_1}v$ moves at least one copy of $f_{\al_l}$ to the right of $f_{\al_{l-1}}$.
This proves b) by reducing it to  a) for $i=l-2$ and applying the induction assumption.
\item [c)] $k=l-1$. Plug
$f_{\al_{l-1}}f_{\al_{l}}f_{\al_{l-1}}=\frac{1}{q+q^{-1}}(f_{\al_{l-1}}^2f_{\al_{l}}+f_{\al_{l}}f_{\al_{l-1}}^2)$
into $f_{\al_{l-1}} f_{\al_l}f_{\al_{l-1}}\stackrel{>}{\ldots} f_{\al_1}v$.
The first summand falls into a) for $i=l-2$, and the second into the b) for $i=l-1$. They both vanish by the induction assumption.
\end{itemize}
Finally, if $k<l-1$, then $f_{\al_k}$ is pushed to the right till it meets $f_{\al_{k+1}}$. This falls into the case c) for
$i=k+1$ and vanishes by the induction assumption.
\end{proof}
We apply Lemma \ref{pre_dt'} to analyze the structure of certain weight subspaces in $\hat M_\la$.
\begin{lemma}
\label{dt'}
Put $\delta'=\al_1+2\al_2+\ldots +2\al_p+\bt$. The subspace of weight
$-\dt'+\la$ in $\hat M_\la$ is spanned by
the vector
$
f_{\al_2}\stackrel{<}{\ldots} f_{\al_p}f_{\bt}f_{\al_p}\stackrel{>}{\ldots}f_{\al_2}f_{\al_1}v.
$
\end{lemma}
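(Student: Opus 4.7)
The plan is to combine a PBW dimension count with a direct verification that the stated vector is non-zero. First I will invoke the PBW factorisation (\ref{tr_fac}): since the parabolic Verma module $\hat M_\la$ is free of rank one over $U_q(\n_\l^-)$ with generator $v$, its weight space of weight $-\dt'+\la$ is in bijection with the $-\dt'$ piece of $U_q(\n_\l^-)$, which carries a PBW basis of ordered monomials in root vectors $f_\gm$ indexed by $\gm\in R^+\setminus R^+_\l$. Under the current restriction $\ell=0$, $m=1$, $n=1+p$, the subalgebra $\l=\g\l(1)\op\s\p(2p)$ has $R^+_\l$ equal to the positive root system of the $\s\p(2p)$-factor supported on $\ve_2,\ldots,\ve_{p+1}$, so $R^+\setminus R^+_\l$ consists precisely of the roots $\ve_1\pm\ve_j$ for $2\le j\le p+1$ together with $2\ve_1$, every one of which has $\ve_1$-coefficient at least one.

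Since $\dt'=\al_1+2\al_2+\ldots+2\al_p+\bt$ telescopes to $\ve_1+\ve_2$, it has $\ve_1$-coefficient exactly one. Any PBW partition of $\dt'$ into roots from $R^+\setminus R^+_\l$ must therefore use a single summand of $\ve_1$-coefficient one, and matching the residual $\ve_2$-coefficient pins this summand down to $\ve_1+\ve_2$ itself. Consequently $(\hat M_\la)_{-\dt'+\la}$ is at most one-dimensional, spanned by $f_{\ve_1+\ve_2}v$ for any chosen root vector $f_{\ve_1+\ve_2}\in U_q(\n_\l^-)$, provided the space is non-trivial.

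To finish, I will show that the vector of the statement is non-zero. Factor it as $w = X f_{\al_1}v$ with $X = f_{\al_2}\stackrel{<}{\ldots} f_{\al_p}f_{\bt}f_{\al_p}\stackrel{>}{\ldots}f_{\al_2}\in U_q(\l^-)$. Repeated $q$-commutator manipulations of the type provided by Lemma~\ref{lem_aux1} identify $X$ with a non-zero scalar multiple of the long negative root vector $f_{2\ve_2}$ of the $\s\p(2p)$-subalgebra. Since $v$ carries a one-dimensional character of $U_q(\p^+)$, every negative Chevalley generator of $\l$ annihilates it, so $f_{2\ve_2}v = 0$. Pushing $f_{2\ve_2}$ past $f_{\al_1}$ inside the rank-two $\s\p(4)$-subalgebra generated by $\al_1$ and $2\ve_2$ then gives $f_{2\ve_2}f_{\al_1}v = c\,f_{\ve_1+\ve_2}v$ with $c$ a non-zero scalar, because by construction an appropriate quantum commutator $[f_{2\ve_2},f_{\al_1}]_{q^a}$ is a non-zero multiple of the PBW root vector $f_{\ve_1+\ve_2}$. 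Hence $w$ is a non-zero multiple of $f_{\ve_1+\ve_2}v$ and spans the weight space, which yields the lemma.

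The hard part will be controlling the $q$-scalars in the identification of $X$ with $f_{2\ve_2}$ and in the commutator $[f_{2\ve_2},f_{\al_1}]_{q^a}$, to be sure none of them vanishes. Each such scalar is a product of $q$-numbers of the form $(q^a-q^{-a})/(q-q^{-1})$ with $a$ a positive integer, hence non-vanishing for formal $\hb$; the verification reduces to elementary bookkeeping inside the rank-two subalgebras $U_q\bigl(\g\l(p+1)\bigr)$ and $U_q\bigl(\s\p(4)\bigr)$ of $U_q(\g)$.
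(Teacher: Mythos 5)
Your dimension count is sound and runs parallel to what the paper ultimately relies on: $\dt'=\ve_1+\ve_2$ has $\ve_1$-coefficient one, while every root in $R^+\setminus R^+_\l$ already has $\ve_1$-coefficient at least one (and $2\ve_1$ has coefficient two), so the only PBW partition is the single root $\ve_1+\ve_2$, giving $\dim\bigl(\hat M_\la\bigr)_{-\dt'+\la}=1$. That part is fine.

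The genuine gap is in your non-vanishing argument. The assertion that
$X = f_{\al_2}\stackrel{<}{\ldots} f_{\al_p}f_{\bt}f_{\al_p}\stackrel{>}{\ldots}f_{\al_2}$
``is a non-zero scalar multiple of the long negative root vector $f_{2\ve_2}$'' is false whenever $p\ge 2$. Already classically $X$ is a degree-$(2p-1)$ element of $U(\l_-)$ with a non-trivial leading symbol in the associated graded $S(\l_-)$, whereas $f_{2\ve_2}$ sits in filtration degree one; they cannot be proportional. Concretely, for $p=2$ one has $f_{\al_2}f_\bt f_{\al_2}=q^{-2}f_{\al_2}^2f_\bt-q^{-2}f_{\al_2}f_{\al_2+\bt}$ in a standard PBW basis of $U_q(\s\p(4)_-)$, so the coefficient of $X$ along $f_{2\al_2+\bt}=f_{2\ve_2}$ is zero — the opposite of what your argument needs. (The weight subspace of $U_q(\l_-)$ at $-2\ve_2$ has dimension $p$, not $1$, so weight considerations alone do not pin down $X$.) As a result the subsequent ``push $f_{2\ve_2}$ past $f_{\al_1}$'' step has nothing to stand on, and $Xf_{\al_1}v\ne 0$ is not established.

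This matters because your route deliberately bypasses Lemma~\ref{pre_dt'}, but the shortcut does not hold. The paper goes the other way: it uses Lemma~\ref{pre_dt'} to show that every Chevalley monomial of weight $-\dt'$ applied to $v$ is forced into the single shape $f_{\al_2}\stackrel{<}{\ldots} f_{\al_p}f_{\bt}f_{\al_p}\stackrel{>}{\ldots}f_{\al_2}f_{\al_1}v$ (any deviation, after reordering, hits the vanishing pattern of that lemma), so the stated vector \emph{spans} the weight space, and then the dimension-one fact from PBW forces it to be non-zero. If you want to keep your direct approach, you would have to carry out the commutation of $X$ through $f_{\al_1}$ honestly — expand $X$ in a PBW basis of $U_q(\l_-)$, move the factors past $f_{\al_1}$ with Lemma~\ref{lem_aux1}, and verify that the resulting coefficient of the PBW generator $f_{\ve_1+\ve_2}$ is invertible in $\C[\![\hbar]\!]$ — which amounts to re-deriving the content of Lemma~\ref{pre_dt'}.
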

\begin{proof}
The subspace of weight $-\dt'+\la$ is spanned by  Chevalley monomials applied to the highest weight vector
$v$. They are products of $f_{\al_1},f_\bt$, and two copies of $f_{\al_i}$, $i=2,\ldots,p$, each.
Every monomial must have the rightmost factor $f_{\al_1}$ as it is the only generator that does not kill $v$.
By Lemma \ref{pre_dt'} all the monomials should have the
factor $f_\bt f_{\al_p}\stackrel{>}{\ldots} f_{\al_1}$ on the right.
We conclude that   vectors of weight $-\dt'+\la$ should be combinations of
$
\phi_\si=g_\si f_\bt f_{\al_p}\stackrel{>}{\ldots} f_{\al_2}f_{\al_1}v
$
with $g_\si=\si(f_{\al_2}\stackrel{<}{\ldots} f_{\al_p})$, where $\si$ is a permutation
of the factors. Suppose $g_\si=\ldots f_{\al_i}f_{\al_k}\stackrel{<}{\ldots} f_{\al_p}$
for some  $k=2,\ldots, p+1$ and $i<k-1$ (we assume formally that
$f_{\al_i}$ is in the rightmost position in $g_\si$  if $k=p+1$). Then
$f_{\al_i}$ can be pushed through to the right of $f_{\bt}$, and this falls under Lemma \ref{pre_dt'}.
Thus, the factors in $g_\si$ are all  ordered as stated, and the permutation
$\si$ is identical.

Finally, the  vector of concern is not zero. Indeed, the
subspace of weight $-\dt'+\la$ in $\hat M_\la$ has the same dimension as the subspace of
weight $-\dt'$ in $U(\n_\l^-)$, which is exactly $1$, due to the Poincar\'e-Birkhoff-Witt (PBW) basis
in $U(\n_\l^-)$.
\end{proof}

Put $\gm=\al_{1}+\ldots +\al_p$, $\dt=2\gm+\bt$,  and  introduce the vector
\be
f_\dt=[f_{\gm},[ \tilde f_{\gm},f_\bt]_{q^{-2}}]_{q^2}
=[ \tilde f_{\gm},[f_{\gm},f_\bt]_{q^2}]_{q^{-2}},
\label{delta_root}
\ee
Remark that the right equality holds by virtue of  Lemma \ref{[tilde,nontilde]}.
Note that $f_\dt$ is not a member of the  standard PBW basis associated with
a reduced decomposition of the longest  Weyl group element, see
Section \ref{SecPBW} for more details.
\begin{lemma}
\label{rearrange_f_delta}
The vector $f_\dt$ is presentable in the form
$$
[f_{\al_1},[f_{\al_{2}}, \ldots [f_{\al_{p}},[f_{\al_1}, \ldots [f_{\al_{p-1}},[f_{\al_p},f_\bt]_{q^2}]_q   \ldots]_{q}]_{q^{-1}}\ldots]_{q^{-1}}]_{q^{-2}}.
$$
\end{lemma}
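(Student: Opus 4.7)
The plan is to identify the inner chain of the target with $W := [f_\gm, f_\bt]_{q^2}$ and then derive the outer chain from the second form of $f_\dt$ in (\ref{delta_root}).

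For the inner chain, I would prove by induction on $p$ the auxiliary identity
\[
[f_\gm, f_\bt]_{q^2} \;=\; [f_{\al_1}, [f_{\al_2}, \ldots, [f_{\al_{p-1}}, [f_{\al_p}, f_\bt]_{q^2}]_q \ldots]_q]_q,
\]
using the recursion $f_\gm = [f_{\al_1}, f_{\gm - \al_1}]_q$ together with the elementary commutator identity $[[A,B]_x, C]_y = [A, [B, C]_y]_x$, valid whenever $AC = CA$ by direct four-term expansion. The hypothesis $[f_{\al_i}, f_\bt] = 0$ for $1 \leq i < p$, which holds since $\al_i$ is non-adjacent to $\bt = \al_n$ in the Dynkin diagram, supplies the required commutativity at each recursive step. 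This identifies the inner block of the lemma's target with $W$.

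For the outer chain, by the second equality in (\ref{delta_root})---which rests on $[\tilde f_\gm, f_\gm] = 0$ from Lemma \ref{[tilde,nontilde]}---we have $f_\dt = [\tilde f_\gm, W]_{q^{-2}}$, and it suffices to establish
\[
[\tilde f_\gm, W]_{q^{-2}} \;=\; [f_{\al_1}, [f_{\al_2}, \ldots, [f_{\al_p}, W]_{q^{-1}} \ldots]_{q^{-1}}]_{q^{-2}}.
\]
Substituting $\tilde f_\gm = [f_{\al_1}, \tilde f_{\gm - \al_1}]_{q^{-1}}$ and expanding the left side produces a main term $[f_{\al_1}, [\tilde f_{\gm - \al_1}, W]_{q^{-2}}]_{q^{-1}}$ together with a correction of the form $-q^{-1}[\tilde f_{\gm - \al_1}, [f_{\al_1}, W]]_{q^{-1}}$, reflecting the failure of $f_{\al_1}$ to commute with $W$ (since $W$ itself contains $f_{\al_1}$).

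The hard step is the treatment of this correction. One computes $[f_{\al_1}, W] = [[f_{\al_1}, f_\gm], f_\bt]_{q^2}$, and $[f_{\al_1}, f_\gm]$ is nonzero but reducible via the quantum Serre relation $f_{\al_1}^2 f_{\al_2} - (q+q^{-1}) f_{\al_1} f_{\al_2} f_{\al_1} + f_{\al_2} f_{\al_1}^2 = 0$, together with the commutativity of $f_{\al_1}$ with $f_{\al_k}$ for $k > 2$. One then tracks how these residual terms propagate through the $p$ nested levels and shows that their net effect is exactly to shift the outermost commutator parameter from the naive $q^{-1}$ to the observed $q^{-2}$. The base case $p = 1$ reduces to the direct verification $[f_{\al_1}, [f_{\al_1}, f_\bt]_{q^{-2}}]_{q^2} = [f_{\al_1}, [f_{\al_1}, f_\bt]_{q^2}]_{q^{-2}}$, both sides expanding symmetrically to $f_{\al_1}^2 f_\bt - (q^2 + q^{-2}) f_{\al_1} f_\bt f_{\al_1} + f_\bt f_{\al_1}^2$; this is an instance of the elementary symmetry $[A, [A, B]_a]_b = [A, [A, B]_b]_a$. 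Iterating the outer recursion upward then closes the induction.
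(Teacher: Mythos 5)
Your overall decomposition --- identify the inner $q$-chain with $W=[f_\gm,f_\bt]_{q^2}$, then unwind $\tilde f_\gm$ in the second form $f_\dt=[\tilde f_\gm,W]_{q^{-2}}$ of (\ref{delta_root}) to produce the outer $q^{-1}$-chain --- is the same plan the paper follows, and your treatment of the inner chain (via $[[A,B]_x,C]_y=[A,[B,C]_y]_x$ when $AC=CA$, using $[f_{\al_i},f_\bt]=0$ for $i<p$) is correct.

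The problem lies in the outer chain, where you stop short of a proof. After writing $\tilde f_\gm=[f_{\al_1},\tilde f_{\gm-\al_1}]_{q^{-1}}$ you apply a specific Jacobi-type identity that produces the correction term $-q^{-1}\bigl[\tilde f_{\gm-\al_1},[f_{\al_1},W]\bigr]_{q^{-1}}$ with an \emph{ordinary} bracket $[f_{\al_1},W]$. You correctly observe this is nonzero and then say ``one tracks how these residual terms propagate\ldots and shows that their net effect is exactly to shift the outermost commutator parameter.'' That tracking is precisely the content of the lemma and is never carried out; moreover your ``main term'' $[f_{\al_1},[\tilde f_{\gm-\al_1},W]_{q^{-2}}]_{q^{-1}}$ has the $q^{-1}$ and $q^{-2}$ on the opposite commutators from the target, so the correction would have to fix both scalars at once --- a substantial computation, not a routine bookkeeping step. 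As written this is a gap.

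The missing idea is that the deformed Jacobi identity is a one-parameter family,
$$[x,[y,z]_a]_b=[[x,y]_c,z]_{ab/c}+c\,[y,[x,z]_{b/c}]_{a/c},$$
and you should choose $c$ so that the correction term carries the $q^{-1}$-deformed bracket rather than the ordinary one. Taking $c=q^{-1}$, $b=q^{-2}$, $a=q^{-1}$ gives
$$[\tilde f_\gm,W]_{q^{-2}}=[[f_{\al_1},\tilde f_{\gm-\al_1}]_{q^{-1}},W]_{q^{-2}}
=[f_{\al_1},[\tilde f_{\gm-\al_1},W]_{q^{-1}}]_{q^{-2}}-q^{-1}[\tilde f_{\gm-\al_1},[f_{\al_1},W]_{q^{-1}}]_{1},$$
and now the correction vanishes \emph{identically}: since $f_{\al_1}$ commutes with $f_\bt$, $[f_{\al_1},W]_{q^{-1}}=[[f_{\al_1},f_\gm]_{q^{-1}},f_\bt]_{q^2}$, and $[f_{\al_1},f_\gm]_{q^{-1}}=0$ by case~2 of Lemma~\ref{[tilde,nontilde]} (a monomial in $\{f_{\al_k}\}$ containing $f_{\al_1}$ but not $f_{\al_p}$). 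Thus there is no residual to propagate: $f_\dt=[f_{\al_1},[\tilde f_{\gm-\al_1},W]_{q^{-1}}]_{q^{-2}}$ on the nose, and the induction continues with the $q^{-2}$ fixed on the outermost bracket and each subsequent peel contributing a clean $q^{-1}$-bracket. (At later steps the relevant vanishing is even easier, since $[f_{\al_k},f_\gm]=0$ for $1<k<p$ by case~1 of the same lemma.) The paper runs essentially this induction, peeling $\tilde f_\gm$ from the other end ($f_{\al_p}$ first) and invoking Lemma~\ref{[tilde,nontilde]} at each step to kill the correction; either peeling direction works once the right value of $c$ is used, but without that choice the argument does not close.

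One minor note: your base case $p=1$ identity $[f_{\al_1},[f_{\al_1},f_\bt]_{q^{-2}}]_{q^2}=[f_{\al_1},[f_{\al_1},f_\bt]_{q^2}]_{q^{-2}}$ is exactly the equality of the two forms in (\ref{delta_root}) specialised to $\gm=\al_1$, so it needs no separate verification.
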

\begin{proof}
First of all, remark that  $p$ internal commutators amount to $[f_{\gm},f_\bt]_{q^2}$.
Further, fix  $i=2,\ldots, p+1$ and define the root $\nu$ from the equality
$\gm=\nu+\al_{i}+\ldots +\al_{p}$ ($\nu$ is simply $\gm$ if $i=p+1$).
Suppose we have proved that $f_\dt$ is presentable in the form
$
[[f_{\nu},f_{\al_{i}}]_{q^{-1}},z]_{q^{-2}},
$
where $z=[f_{\al_{i+1}}, \ldots [f_{\al_{p}},[f_{\gm},f_\bt]_{q^2}]_{q^{-1}} \ldots]_{q^{-1}}$.
In particular, this is true for $i=p+1$.
The vector $f_\nu$ commutes with everything in $z$ but $f_\gm$.
All the Chevalley generators in $f_\nu$ except from $f_{\al_1}$
commute with $f_\gm$, and $f_{\al_1}$ enters $f_\nu$ exactly once.
 Applying Lemma \ref{[tilde,nontilde]},
we conclude that $[f_\nu,z]_{q^{-1}}=0$.
Using the "Jacobi identity"
\be
[x,[y,z]_a]_b=[[x,y]_c,z]_{\frac{ab}{c}}+c [y,[x,z]_{\frac{b}{c}}]_{\frac{a}{c}},
\label{Jacobi}
\ee
which holds true in any associative algebra with scalar $a,b$, and invertible $c$,
we write
$$
[f_{\nu},[f_{\al_{i}},z]_{q^{-1}},]_{q^{-2}}
=
[[f_{\nu},f_{\al_{i}}]_{q^{-1}},z]_{q^{-2}}
+
q^{-1}[[f_{\nu},f_{\al_{i}}]_{q^{-1}},z],
$$
for $a=c=q^{-1}$, $b=q^{-2}$.
The second term vanishes, and we come to the equality
$
[[f_{\nu},f_{\al_{i}}]_{q^{-1}},z]_{q^{-2}}=[f_{\nu},[f_{\al_{i}},z]_{q^{-1}},]_{q^{-2}}
$. Descending induction on $i=p+1,\ldots, 2$ completes the proof.
\end{proof}

Now we lift the assumption $\ell=0$, $m=1$ and work out the  case of  general
$\k$ and $\l$:
\be
\k&=&\g\l(n_1)\oplus \ldots \oplus\g\l(n_\ell)\oplus \s\p(2m)\oplus\s\p(2p),
\label{stab_subalg}\\
\l&=&\g\l(n_1)\oplus \ldots \oplus\g\l(n_\ell)\oplus \g\l(m)\oplus\s\p(2p).
\label{Levi_subalg}
\ee
Let $\g'=\s\p(2+2p)$ be the Lie subalgebra in $\g$ corresponding to the positive simple roots $(\al_{n-p},\ldots, \al_n)$.
The vectors $f_\gm,\tilde f_\gm,f_\dt \in U_q(\g')$
are carried over to  $U_q(\g)$, where we use the same notation for them.
This
relates the case $\ell=0$, $m=1$ to the general setting.
The root $\al_{n-p}$  plays  the same role as $\al_1$ in the
symmetric case with $m=1$.
We will use the notation $\al$ for it in order to emphasize the general meaning of formulas with it.

\begin{propn}
\label{singular_Verma}
Suppose that  $q^{2(\la,\al)}=-q^{-2p}$. Then $f_\dt v$
is a singular vector in $\hat M_\la$.
\end{propn}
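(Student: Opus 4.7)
\noindent\emph{Proof proposal.}
Since $v$ is the highest weight vector of $\hat M_\la$, $e_\mu v = 0$ for every simple positive root $\mu$, so the singularity of $f_\dt v$ amounts to $[e_\mu, f_\dt] v = 0$ for each $\mu \in \Pi^+$. I would organize the verification by cases on $\mu$. For $\mu$ strictly outside the support $\{\al, \al_{n-p+1}, \ldots, \al_{n-1}, \bt\}$ of $\dt$, distinct simple Chevalley generators commute; for $\mu$ interior to the chain defining $\gm$, the same conclusions follow from Lemma \ref{lem_aux1}. In either case $e_\mu$ commutes with all of $f_\gm,\tilde f_\gm, f_\bt$, hence with $f_\dt$, and the desired equality is immediate. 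Only the three critical simple roots $\mu \in \{\al_{n-1}, \bt, \al\}$ remain.

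For $\mu = \al_{n-1}$ and $\mu = \bt$ I would show $[e_\mu, f_\dt] = 0$ identically in $U_q(\g)$ by expanding via the Leibniz-type rule $[e,[x,y]_a] = [[e,x],y]_a + [x,[e,y]]_a$ applied to $f_\dt = [\tilde f_\gm, [f_\gm, f_\bt]_{q^2}]_{q^{-2}}$. For $\mu = \al_{n-1}$, Lemma \ref{lem_aux1} produces root vectors $f_{\gm''}, \tilde f_{\gm''}$ with $\gm''$ excluding $\al_{n-1}$, so $f_{\gm''}$ commutes with $f_\bt$ (killing the inner Leibniz contribution), while the outer contribution vanishes by the $q^{-1}$-relation $\tilde f_{\gm''}\,[f_\gm,f_\bt]_{q^2} = q^{-1} [f_\gm, f_\bt]_{q^2}\,\tilde f_{\gm''}$, itself derivable from Lemma \ref{[tilde,nontilde]}. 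For $\mu = \bt$, $e_\bt$ commutes with both $f_\gm$ and $\tilde f_\gm$, so only $[e_\bt, f_\bt]$ contributes; the two resulting Leibniz summands cancel by $[\tilde f_\gm, f_\gm] = 0$ combined with the $q^{h_\bt}$-commutation relations with $f_\gm$ and $\tilde f_\gm$.

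The crucial case is $\mu = \al$, where the hypothesis on $\la$ enters. Leibniz splits the commutator as
\[
[e_\al, f_\dt] = q^{-1} [\tilde f_{\gm'}, [f_\gm, f_\bt]_{q^2}]_{q^{-1}}\, q^{h_\al} + [\tilde f_\gm, [f_{\gm'}, f_\bt]_{q^2}]_{q^{-1}}\, q^{-h_\al},
\]
where $\gm' = \al_{n-p+1} + \cdots + \al_{n-1}$. Applied to $v$, the identities $\tilde f_{\gm'} v = f_{\gm'} v = [f_{\gm'}, f_\bt]_{q^2}\, v = 0$ (each of $\tilde f_{\gm'}, f_{\gm'}$ is a word in Chevalley generators of the Levi $U_q(\l)$, which kill $v$) collapse the two summands to $q^{-1} L\,\tilde f_{\gm'}\,[f_\gm, f_\bt]_{q^2}\, v$ and $-q^{-1} L^{-1}\,[f_{\gm'}, f_\bt]_{q^2}\,\tilde f_\gm\, v$ respectively, where $L = q^{(\la, \al)}$. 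I would then substitute the explicit forms $f_\gm v = (-q)^{p-1} w$ and $\tilde f_\gm v = (-q^{-1})^{p-1} w$, with $w = f_{\al_{n-1}} \cdots f_{\al_{n-p+1}} f_\al v$ (proved inductively from the nested-commutator expansion, where only the monomial with $f_\al$ in rightmost position survives the action on $v$). Pushing $f_\bt$ through the string $w$ via Serre relations and the higher Serre identities of Lemma \ref{[tilde,nontilde]}, both summands reduce to scalar multiples of a common nonzero vector; their combined coefficient equals a nonzero unit times $q^p L + q^{-p} L^{-1}$, which vanishes precisely when $L^2 = -q^{-2p}$, that is $q^{2(\la, \al)} = -q^{-2p}$.

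The main technical obstacle is the $q$-power bookkeeping in the case $\mu = \al$: tracking how the exponent $p$ emerges as $f_\bt$ is pushed through the length-$p$ chain $f_\al, f_{\al_{n-p+1}}, \ldots, f_{\al_{n-1}}$ (involving $p-1$ short-root Serre crossings and one long-root crossing at $\bt$) requires care. One can cross-check the answer in the base case $p=1$, which recovers the $\s\p(4)$ symmetric class treated in \cite{M3}; the general case follows either by direct computation or by an induction on $p$ inside $\g' = \s\p(2+2p)$.
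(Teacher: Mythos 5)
Your proposal is correct and follows essentially the same strategy as the paper: check $[e_\mu,f_\dt]v=0$ for each simple $\mu$, dispose of the non-critical roots via commutation and Lemma~\ref{lem_aux1}, verify that $[e_{\al_{n-1}},f_\dt]$ and $[e_\bt,f_\dt]$ vanish identically, and let the hypothesis on $\la$ enter only through $\mu=\al$. Two small remarks. First, your uniform treatment of $\mu=\al_{n-1}$ via the relation $[\tilde f_{\gm''},[f_\gm,f_\bt]_{q^2}]_{q^{-1}}=0$ (which indeed follows from Lemma~\ref{[tilde,nontilde]} together with $[\tilde f_{\gm''},f_\bt]=0$) is marginally tidier than the paper's case split between $p=2$ and $p>2$; the paper's route through Lemma~\ref{rearrange_f_delta} is what you are implicitly bypassing. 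Second, and more to the point, the ``main technical obstacle'' you flag for $\mu=\al$ is exactly what Lemma~\ref{dt'} is designed to resolve: it shows the weight-$(\la-\dt')$ subspace of $\hat M_\la$ is one-dimensional and (via Lemma~\ref{pre_dt'}) that all non-identity permutation monomials in $f_{\gm'}$ and $\tilde f_{\gm'}$ annihilate $f_\bt w$, so that $\tilde f_{\gm'}f_\bt w=[f_{\gm'},f_\bt]_{q^2}w=f_{\al_2}\!\cdots f_{\al_p}f_\bt w$ with coefficient exactly $1$ on both sides. Without that, ``both summands reduce to scalar multiples of a common nonzero vector'' does not by itself force the coefficient to be $q^pL+q^{-p}L^{-1}$ rather than $q^pL+c\,q^{-p}L^{-1}$ for some other unit $c$, and the vanishing condition on $\la$ would come out wrong. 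Invoking Lemma~\ref{dt'} at this point closes your argument and reproduces the paper's computation.
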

\begin{proof}
At first, we return to the symmetric case $\ell=0$ with $m=1$.
Furthermore, as the case $p=1$ has been studied  in \cite{M3}, we assume $p>1$.

Applying $e_\bt$ to $f_\dt v$ we obtain, up to a non-zero scalar factor,
$$
[f_{\gm},[ \tilde f_{\gm},q^{h_\bt}-q^{-h_\bt}]_{q^{-2}}]_{q^2}
\sim [f_{\gm},[ \tilde f_{\gm},q^{-h_\bt}]_{q^{-2}}]_{q^2}
\sim [f_{\gm}, \tilde f_{\gm}q^{-h_\bt}]_{q^2}=[f_{\gm}, \tilde f_{\gm}q^{-h_\bt}]_{q^2}=
[f_{\gm}, \tilde f_{\gm}]q^{-h_\bt}.
$$
The last commutator is zero, by Lemma \ref{[tilde,nontilde]}.

If $1<i<n-1$, then $e_{\al_i}$ commutes with $f_\gm$, $\tilde f_\gm$, and hence with
$[f_{\gm},[ \tilde f_{\gm},f_\bt]_{q^{-2}}]_{q^2}$, by Lemma \ref{lem_aux1}. Therefore  $e_{\al_i}f_\dt v=0$ for such $i$. Thus, we only need to check that
 $f_\dt v$ is  annihilated by $e_{\al}=e_{\al_1}$ and $e_{\al_p}=e_{\al_{n-1}}$.

Using the formulas
$
[e_{\al_p},f_{\gm}]=-q f_{\gm''}q^{h_{\al_{p}}}
,\quad
[e_{\al_p},\tilde f_{\gm}]=-q^{-1} \tilde f_{\gm''}q^{-h_{\al_{p}}}
$
from Lemma \ref{lem_aux1} we get for $[e_{\al_p},f_\dt]$ an expression, which is proportional to
$$
q [f_{\gm''}q^{h_{\al_{p}}},[ \tilde f_{\gm},f_\bt]_{q^{-2}}]_{q^2}+q^{-1} [f_{\gm},[\tilde f_{\gm''}q^{-h_{\al_{p}}},f_\bt]_{q^{-2}}]_{q^2}.
$$
The second term vanishes, because $[\tilde f_{\gm''}q^{-h_{\al_{p}}},f_\bt]_{q^{-2}}\sim [\tilde f_{\gm''},f_\bt]q^{-h_{\al_{p}}}$=0.
Let us check that the first term is nil too.

For $p=2$, the first term is proportional to
$$
[f_{\al_{p-1}}q^{h_{\al_{p}}},[ \tilde f_{\gm},f_\bt]_{q^{-2}}]_{q^2}
=q[f_{\al_{p-1}},[ \tilde f_{\gm},f_\bt]_{q^{-2}}]_{q}q^{h_{\al_{p}}}
=q[[f_{\al_{p-1}}, \tilde f_{\gm}]_{q},f_\bt]_{q^{-2}}q^{h_{\al_{p}}}=0,
$$
as $[f_{\al_{p-1}}, \tilde f_{\gm}]_{q}=[f_{\al_{1}}, \tilde f_{\gm}]_{q}=0$ by Lemma \ref{[tilde,nontilde]}.

If $p>2$, we present $f_{\dt}$ as
$
f_{\dt}=[ \tilde f_{\nu},[f_{\al_{p-1}},[f_{\al_{p}},[f_{\gm},f_\bt]_{q^2}]_{q^{-1}}]_{q^{-1}}]_{q^{-2}},
$
where  $\nu=\gm -\al_{p-1}-\al_p$ (we did it in the proof of Lemma \ref{rearrange_f_delta}).
This way we unveil $f_{\al_{p-1}}$ hidden in $\tilde f_\gm$.
Although it commutes with $e_{\al_p}$, it does not commute with $q^{\pm h_{\al_p}}$ arising from $[e_{\al_p},f_{\al_p}]$.
Observe that $[e_{\al_p},[f_\gm,f_\bt]_{q^2}]\sim [f_{\gm''}q^{h_{\al_{p}}},f_\bt]_{q^{2}}\sim [f_{\gm''},f_\bt]q^{h_{\al_{p}}}$=0.
Then $[e_{\al_p},f_{\dt}]$ is proportional to
$$
[ \tilde f_{\nu},[f_{\al_{p-1}},[q^{h_{\al_{p}}}-q^{-h_{\al_{p}}},[f_{\gm},f_\bt]_{q^2}]_{q^{-1}}]_{q^{-1}}]_{q^{-2}}
=[ \tilde f_{\nu},[f_{\al_{p-1}},[q^{h_{\al_{p}}},[f_{\gm},f_\bt]_{q^2}]_{q^{-1}}]_{q^{-1}}]_{q^{-2}}=0,
$$
because
$
[f_{\al_{p-1}},[q^{h_{\al_{p}}},[f_{\gm},f_\bt]_{q^2}]_{q^{-1}}]_{q^{-1}}
\sim [[f_{\al_{p-1}},f_{\gm}],f_\bt]_{q^2}q^{h_{\al_{p}}}=0,
$
by Lemma \ref{[tilde,nontilde]}.

We have shown that $f_\dt v$ is annihilated by $e_{\al_i}\in U_q\bigl(\s\p(2p)\bigr)\subset U_q(\g)$.
Next we check that it is killed by $e_{\al_1}=e_\al$.
Based on Lemma \ref{lem_aux1}, we find that $e_{\al} f_\dt v=[e_{\al}, f_\dt] v$ is equal to
$$
[f_{\gm'}q^{-h_{\al}},[ \tilde f_{\gm},f_\bt]_{q^{-2}}]_{q^2}v+[f_{\gm},[\tilde f_{\gm'}q^{h_{\al}},f_\bt]_{q^{-2}}]_{q^2}v=
$$
$$
=q^{1-(\al,\la)}[f_{\gm'},[ \tilde f_{\gm},f_\bt]_{q^{-2}}]_{q}v+q^{(\al,\la)}[f_{\gm},[\tilde f_{\gm'},f_\bt]_{q^{-2}}]_{q}v.
$$
Using Lemma \ref{dt'}, we develop the commutators in $[e_{\al}, f_\dt] v$ and find it  proportional to
$$
(q^{-(\al,\la)-p}+q^{(\al,\la)+p})f_{\al_2}\stackrel{<}{\ldots} f_{\al_p}f_{\bt}f_{\al_p}\stackrel{>}{\ldots} f_{\al_2}f_{\al}v.
$$
It turns zero if and only if $q^{2(\al,\la)}=-q^{-2p}$. This completes the proof for $\ell=0$, $m=1$.

The vector $f_\dt v\in \hat M_\la$ has been shown to be singular with respect to the  subalgebra
$U_q(\g')$, where $\g'=\s\p(2+2p)$ is defined above. Therefore $f_\dt v$ is singular with
respect to entire $U_q(\g)$,
as  $f_\dt\in U_q(\g')$ commutes with simple root vectors $e_\mu\not =U_q(\g')$.
\end{proof}
The following statement presents $f_\dt v$ as a linear combination
of Chevalley monomials.
\begin{propn}
\label{basis_for_f_delta}
The  vector $f_\dt v$ is a linear combination of the monomials
$$
f_{\al_{i}}\stackrel{<}{\ldots} f_{\al_{n-1}} f_\bt f_{\al_{i-1}}\stackrel{>}{\ldots} f_{\al_{n-p}}
f_{\al_{n-1}}\stackrel{>}{\ldots}f_{\al_{n-p}}v,\quad i=n-p+1,\ldots, n.
$$
\end{propn}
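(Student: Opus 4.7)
The plan is to expand $f_\dt v$ using the nested $q$-commutator presentation of Lemma \ref{rearrange_f_delta} and compute its action on $v$ from the innermost layer outward, repeatedly invoking the annihilation relations $f_\bt v = 0$ and $f_{\al_k} v = 0$ for $k \neq n-p$.

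First, I would establish by induction on $p$ that $f_\gm v = (-q)^{p-1} u$, where $u := f_{\al_{n-1}}\stackrel{>}{\ldots} f_{\al_{n-p}} v$ is the rightmost tail common to every listed monomial. The factorization $f_\gm = [f_{\al_{n-p}}, X]_q$, with $X$ lying in the subalgebra generated by $f_{\al_{n-p+1}}, \ldots, f_{\al_{n-1}}$---which is contained in $U_q(\l)$ and therefore kills $v$---gives $f_\gm v = -q X f_{\al_{n-p}} v$ and reduces to the inductive step. Consequently the inner block $B := [f_\gm, f_\bt]_{q^2}$ satisfies $Bv = -q^{p+1}(-1)^{p-1} f_\bt u$. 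Each outer commutator $[f_{\al_k}, \cdot]_{q^{-1}}$ with $k > n-p$ acts on $v$ by simple prepending, since $[f_{\al_k}, Y]_{q^{-1}} v = f_{\al_k}(Yv)$ whenever $f_{\al_k} v = 0$. Iterating inside out thus prepends $f_{\al_{n-1}}, f_{\al_{n-2}}, \ldots, f_{\al_{n-p+1}}$ in that order, producing an intermediate vector proportional to $f_{\al_{n-p+1}}\cdots f_{\al_{n-1}} f_\bt u$.

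The outermost layer $[f_{\al_{n-p}}, \cdot]_{q^{-2}}$ splits into two contributions: one prepends $f_{\al_{n-p}}$ to the preceding vector, and the other applies the preceding operator chain to $v' := f_{\al_{n-p}} v$ instead of $v$. The first contribution, simplified via the commutations $[f_{\al_{n-p}}, f_\bt] = 0$ and $[f_{\al_{n-p}}, f_{\al_k}] = 0$ for $k \ge n-p+2$ together with $f_{\al_{n-p}} f_{\al_{n-p+1}} = [f_{\al_{n-p}}, f_{\al_{n-p+1}}]_q + q f_{\al_{n-p+1}} f_{\al_{n-p}}$, yields the monomial indexed by $i = n-p+1$ plus a residual term. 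For the second contribution, I would repeat the inside-out analysis with $v'$ in place of $v$: since $f_{\al_{n-p+1}} v' \neq 0$ in general, the Serre relation between $\al_{n-p}$ and $\al_{n-p+1}$, together with the standard commutations, governs branching that successively generates the monomials indexed by $i = n-p+2, n-p+3, \ldots, n$.

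The main obstacle is the combinatorial bookkeeping in this recursive branching: one must verify that each residual term from the first contribution and each sub-branch from the second contribution reduces---via the commutations $[f_{\al_j}, f_{\al_k}] = 0$ for $|j-k|>1$, $[f_{\al_k}, f_\bt] = 0$ for $k < n-1$, and the Serre relations---into a linear combination of the $p$ listed monomials, without leaving any term outside their span.
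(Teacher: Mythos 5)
Your approach differs from the paper's. The paper simply expands
$$
f_\dt=f_{\gm} \tilde f_{\gm}f_\bt-q^{-2}f_{\gm} f_\bt \tilde f_{\gm} -q^{2}\tilde f_{\gm}f_\bt f_{\gm}+f_\bt f_{\gm} \tilde f_{\gm}
$$
and invokes Lemma~\ref{pre_dt'} directly: the first term kills $v$, and for each of the other three the rightmost factor, $f_\gm$ or $\tilde f_\gm$, sends $v$ to a multiple of the tail $u=f_{\al_{n-1}}\stackrel{>}{\ldots}f_{\al_{n-p}}v$, after which Lemma~\ref{pre_dt'} (the ascending-ladder constraint) forces the surviving Chevalley words into the stated shape. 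You instead start from the nested presentation of Lemma~\ref{rearrange_f_delta} and peel commutators off from the inside out. That is a legitimate alternative, and your opening computations --- $f_\gm v=(-q)^{p-1}u$, $Bv=(-1)^p q^{p+1} f_\bt u$, the "prepending" observation for $[f_{\al_k},\cdot]_{q^{-1}}$ when $f_{\al_k}v=0$ --- are all correct.

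There are, however, two genuine problems. First, your "first contribution" $f_{\al_{n-p}}f_{\al_{n-p+1}}\cdots f_{\al_{n-1}}f_\bt\,u$ is \emph{exactly} the monomial of the stated form with index $i=n-p$. You then try to trade it for the $i=n-p+1$ monomial plus residuals via $[f_{\al_{n-p}},f_\bt]=0$ and the $\al_{n-p},\al_{n-p+1}$ relation. This rewriting cannot succeed in general: for $p=1$ (so $n-p=n-1$ and $\al_{n-p}$ is adjacent to $\bt$) your commutation hypothesis $[f_{\al_{n-p}},f_\bt]=0$ fails, and a direct computation gives $f_\dt v=-(q^2+q^{-2})\,f_{\al_{n-1}}f_\bt f_{\al_{n-1}}v+f_\bt f_{\al_{n-1}}^2 v$, where the two monomials are linearly independent in the two-dimensional weight space of $\hat M_\la$ at weight $\la-\dt$. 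In fact this points to an off-by-one in the statement itself: the list should run $i=n-p,\ldots,n$ (i.e.\ $p+1$ monomials, matching the family $\phi_1,\ldots,\phi_{p+1}$ of Section~\ref{secWMsym}, where the text asserts that $f_\dt$ is a linear combination of all $\phi_i$). Once this is recognized, the first contribution is already on the list and no rewriting is needed --- your attempted simplification is a wrong turn.

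Second, and more seriously, you never carry out the analysis of the second contribution $Z_{p-1}(f_{\al_{n-p}}v)$. You note that the prepending trick breaks because $f_{\al_{n-p+1}}$ no longer annihilates $v'=f_{\al_{n-p}}v$, and you defer to "Serre relations" and "combinatorial bookkeeping" without exhibiting what that bookkeeping is or why it terminates inside the span of the $\phi_i$'s. That is precisely where the real content lies: after the first layer one must control which Chevalley words survive on a state that already carries one $f_{\al_{n-p}}$, and the natural tool for that is exactly Lemma~\ref{pre_dt'} (suitably applied), which your plan does not invoke. As written the proposal is therefore an outline rather than a proof, and its one concrete reduction step is both unnecessary and, in the boundary case, incorrect.
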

\begin{proof}
Follows  from Lemma \ref{pre_dt'} applied to $f_\dt=
f_{\gm} \tilde f_{\gm}f_\bt-{q^{-2}}f_{\gm} f_\bt \tilde f_{\gm} -{q^{2}}\tilde f_{\gm}f_\bt f_{\gm}+f_\bt f_{\gm} \tilde f_{\gm}$.
\end{proof}

From now on, saying the  vector $f_\dt v\in \hat M_\la$ is singular,  we assume that the weight $\la$ satisfies the hypothesis of
Proposition \ref{singular_Verma}.

We  define certain weight subspaces in order to formalize further exposition.
For all $k=1,\ldots, \ell+2$ let $\nu_k$ be an element of the standard orthogonal basis $\{\ve_i\}_{i=1}^n\subset \h^*$ which is not
vanishing on the Cartan subalgebra of the $i$-th block in (\ref{Levi_subalg}), counting from the left.
Put $\mu^0_k=e^{2(\eta,\nu_k)}$ assuming $\eta \in \c^*_\l$.
This definition is independent of the choice of $\nu_k$: for any other $\nu_k$, call it $\nu_k'$, the
 difference $\nu_k-\nu_k'$ is a root of $\l$ and orthogonal to $\eta \in \c^*_\l$. In particular, one can take
 $\nu_{\ell+2}=\ve_n=\frac{1}{2}\bt$. Since $\c^*_\l$ is orthogonal to $\bt$, one has $\mu^0_{\ell+2}=1$.

Let $\c_{\l,reg}^*$ denote the set of all weights $\eta\in \c_\l^*$ such that
$\mu^0_k \not= (\mu^0_j)^{\pm1}$ for $k\not =j$.
Denote by $\c_{\k}^*$ the subset in $\c_\l^*$ such that $\mu^0_{\ell+1}=-1$
and by $\c_{\k,reg}^*$ the subspace of $\c_{\k}^*$ such that $\mu^0_k\not =(\mu^0_j)^{\pm1}$
for $k,j=1,\ldots, \ell+2$ and $k\not =j$. Clearly $\c_{\k,reg}^*$ is dense in $\c_{\k}^*$, being complementary
to a discrete family of hyperplanes. Remark that the
 vector $\mub^0=(\mu^0_i)$ belongs
to $\hat \Mc_K$, which parameterizes the moduli space $\Mc_K$ of classes with fixed $K$.
All elements of $\hat \Mc_K$ can be obtained this way.

The sets $\c^*_{\k}$ and $\c^*_{\k,reg}$ are explicitly described as follows.
Introduce $\ell+2$ weights $\E_i\in \h^*$:
$$\mathcal{E}_1=\ve_1+\ldots +\ve_{n_1}, \quad \mathcal{E}_2=\ve_{n_1+1}+\ldots +\ve_{n_1+n_2},\quad \ldots,\quad\mathcal{E}_{\ell+2}=
\ve_{n-p+1}+\ldots+\ve_n,$$
so that $(\mathcal{E}_i,\nu_k)=\dt_{ik}$.
Then $\c_{\l}^*$ is formed by the combinations $\sum_{i=1}^ {\ell+1} \La_i \E_i$ with arbitrary complex coefficients $\La_i$.
 The subset
$\c_{\k}^*\subset \c_{\l}^*$ is characterized  by the condition $\La_{\ell+1}= \frac{\sqrt{-1}\>\pi}{2}+\sqrt{-1\>}\Z\pi$.
The subsets $\c_{\l,reg}^*\subset \c_{\l}^*$ and $\c_{\k,reg}^*\subset \c_{\k}^*$ are specified  by
$\La_k\not\in  \sqrt{-1}\pi \Z$ and
$\La_k\pm \La_j\not\in  \sqrt{-1}\pi \Z$, for $k,j=1,\ldots,\ell+1$, $k\not =j$.

Finally, we introduce two subsets $\mathfrak{C}^*_{\k,reg}\subset\mathfrak{C}^*_{\k}$ in $\frac{1}{\hbar}\c_\l^*\oplus \c_\l^*$ by
$\mathfrak{C}^*_{\k}=\hbar^{-1} \c_{\k}^*-p\E_{\ell+1}$ and $\mathfrak{C}^*_{\k,reg}=\hbar^{-1} \c_{\k,reg}^*-p\E_{\ell+1}$; obviously $\mathfrak{C}^*_{\k,reg}$ is dense in $\mathfrak{C}^*_{\k}$.
By construction, all weights from $\mathfrak{C}^*_{\k}$ satisfy
$q^{2(\al,\la)}=-q^{-2p}$.

\begin{definition} Assuming $\la \in \mathfrak{C}^*_{\k}$, we denote by $\hat M_{\la-\dt}\subset \hat M_\la$
the submodule generated by $f_\dt v$ and we denote by $M_\la$ the quotient module  $\hat M_\la/\hat M_{\la-\dt}$.
\end{definition}
The module  $M_\la$ is the subject of our further study.

\section{On a basis in $M_\la$}
\label{SecPBW}
In this section we prove that the quotient module $M_\la$ is free over $\C[\![\hbar]\!]$ and construct a PBW basis for it. To that end, we
need a version of the PBW theorem for the nilponent subalgebra $U_\hbar(\g_-)\subset U_\hbar(\g)$. Our consideration is
based on the standard PBW theorem for $U_\hbar(\g)$ with  the advantage of working over $\C[\![\hbar]\!]$,
which makes us more flexible in the choice of basis.

Recall that the quantum version of the higher root vectors in $\g$ can be associated with a reduced decomposition
of the longest element $w_0$ of the Weyl group of $\g$, \cite{ChP}.
Every such decomposition gives rise to an ordered set of root vectors $f_i$, $i=1,\ldots, r=|R^+|$, generating a PBW basis in $U_q(\g_-)$. By construction, higher root vectors are expressed through deformed commutators of the Chevalley generators. By a deformed commutator, we mean $[x,y]_a$, with $a\in \C[\![\hbar]\!]$ satisfying $a=1\mod \hbar$.

We argue that, over $\C[\![\hbar]\!]$, we can a) arbitrarily change the order of the $f_i$'s and b) redefine $f_i$ in an
appropriate although rather general way.
Namely, for $\mu\in R^+$ the root vector $f_\mu$ is a combination $\sum_l a_l \phi_l$,
where $\phi_l$ is a monomial in $\{f_{\al}\}_{\al\in \Pi^+}$ and $a_l \in \C[\![\hbar]\!]$ is a scalar.
Let $\hat f_\mu$ be an element of $U_\hbar(\g_-)$ obtained from $f_\mu$ by replacing
$a_l$  with their deformations $\hat a_l\in \C[\![\hbar]\!]$, i.e. with any $\hat a_l=a_l \mod \hbar$. Note that $a_l$
are themselves deformations of the classical coefficients participating in the
 classical counterpart of $f_\mu$.
 Moreover, if the classical prototype of $f_\mu$ is a combination of commutators
in the Chevalley generators, we can replace them with any deformed commutators. This way  we get rid of
the sequence of  deformed commutators prescribed by the reduced decomposition of $w_0$. Then
$\hat f_\mu$ is still a deformation of $f_\mu$, i.e. $\hat f_\mu=f_\mu\mod \hbar$, and the omitted terms
have the same weight $\mu$.

Consider, for instance, the case of $\s\p(4)$ treated in \cite{M3}. Let $\al,\bt$ be the short and long simple roots, respectively.
There are two more positive roots $\gm=\al+\bt$ and $\dt=2\al+\bt$. The classical root vectors are
$f_\gm=[f_\al,f_\bt]$, $f_\dt=[f_\al,[f_\al,f_\bt]]$, and their standard $q$-counterparts associated with
the decomposition $w_0=s_\bt s_\al s_\bt s_\al$ (simple relfections) are $f_\gm=[f_{\al},f_\bt]_{q^{2}}$, $f_\dt=[f_{\al},[f_{\al},f_\bt]_{q^{2}}]$.
Our version is $\hat f_\gm=[f_{\al},f_\bt]_{q^{2}}$, $\hat f_\dt=[f_{\al},[f_{\al},f_\bt]_{q^{2}}]_{q^{-2}}$, i. e.
the external commutator in $\hat f_\dt$ is deformed as compared to $f_\dt$.
In terms of  Chevalley monomials, the $\dt$-root vectors read
$$
f_\dt=f_\al^2f_\bt-(q^{2}+1)f_\al f_\bt f_\al+q^{2}f_\bt f_\al^2, \quad
\hat f_\dt=f_\al^2f_\bt-(q^2+q^{-2})f_\al f_\bt f_\al+f_\bt f_\al^2.
$$
The coefficients in $\hat f_\dt$ are deformation of the coefficients in
$f_\dt$. In the standard PBW basis  corresponding to the ordering $f_\bt, f_\gm ,f_\dt, f_\al$, one has $\hat f_\dt=f_\dt+(1-q^{-2})f_\gm f_\al= f_\dt \mod \hbar$.

 We fix an arbitrary order on the set $\{\hat  f_i\}$.
 Let $\kb=(k_1,\ldots, k_r)$ denote a multivector with non-negative integer components.
\begin{propn}
The monomials $\hat f_{\kb} =\hat f_1^{k_1}\ldots \hat f_r^{k_r}\subset U_\hbar (\g_-)$ form a basis.
\label{PBW}
\end{propn}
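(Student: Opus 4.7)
The plan is to reduce the claim to a finite-dimensional linear-algebra statement in each weight component and then apply the Nakayama lemma over the local ring $\C[\![\hbar]\!]$, using classical PBW as the input.

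First, I would decompose $U_\hbar(\g_-)$ according to its weight grading, $U_\hbar(\g_-)=\bigoplus_{\nu\in \Z_{\geqslant 0}\Pi^+}U_\hbar(\g_-)_{-\nu}$. Each weight subspace is a free $\C[\![\hbar]\!]$-module of finite rank $d_\nu$ (the number of standard PBW monomials $f_\kb$ of weight $-\nu$), because the standard PBW theorem already provides such a basis in $U_\hbar(\g_-)$. Since $\hat f_i$ has the same weight as $f_i$, the monomials $\hat f_\kb$ of weight $-\nu$ are in natural bijection with the $f_\kb$ of weight $-\nu$. Thus it suffices to show that within each fixed weight component the $d_\nu$ monomials $\{\hat f_\kb\}$ form a $\C[\![\hbar]\!]$-basis.

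Next, I would examine the classical limit $\hbar\to 0$. By construction, each deformed commutator $[x,y]_{\hat a}$ with $\hat a\equiv 1\mod \hbar$ reduces to the classical commutator $[x,y]$ modulo $\hbar$, and each rescaled coefficient $\hat a_l$ reduces to the classical coefficient $a_l$. Hence the quantum root vector $\hat f_\mu$ reduces, modulo $\hbar$, to the classical root vector $f_\mu^{\mathrm{cl}}\in U(\g_-)$ built from the Chevalley generators by (classical) iterated commutators. Consequently, $\hat f_\kb\mod \hbar$ is a classical ordered monomial $(f_1^{\mathrm{cl}})^{k_1}\cdots (f_r^{\mathrm{cl}})^{k_r}$ relative to the chosen (arbitrary) ordering of root vectors. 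The classical PBW theorem asserts that for any linear ordering of a basis of $\n^-$, the resulting ordered monomials form a basis of $U(\n^-)$; hence within the weight space of weight $-\nu$, the $d_\nu$ reduced monomials $\overline{\hat f_\kb}$ form a $\C$-basis of $U(\g_-)_{-\nu}$.

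Finally, I would invoke the Nakayama lemma over the local ring $\C[\![\hbar]\!]$ with maximal ideal $(\hbar)$. The elements $\{\hat f_\kb\}$ of the rank $d_\nu$ free module $U_\hbar(\g_-)_{-\nu}$ reduce, modulo $\hbar$, to a $\C$-basis of the residue quotient $U_\hbar(\g_-)_{-\nu}/\hbar U_\hbar(\g_-)_{-\nu}\cong U(\g_-)_{-\nu}$; equivalently, the transition matrix from the standard PBW basis $\{f_\kb\}$ to $\{\hat f_\kb\}$ has nonzero determinant modulo $\hbar$, hence is a unit in $\C[\![\hbar]\!]$ and thus invertible. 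This forces $\{\hat f_\kb\}$ to be a $\C[\![\hbar]\!]$-basis of $U_\hbar(\g_-)_{-\nu}$. Summing over all weights yields the proposition.

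There is no serious obstacle: the only point requiring mild care is to verify that under the liberal notion of "deformation" allowed here (reshuffling the order of factors, deforming the bracket parameters, and deforming the scalar coefficients in the Chevalley-monomial expansion), $\hat f_\mu$ still reduces to the classical root vector $f_\mu^{\mathrm{cl}}$ modulo $\hbar$. This follows from the hypothesis $\hat a_l\equiv a_l\mod \hbar$ and the fact that each deformed commutator is $\hbar$-adically close to the classical one; everything else is standard commutative algebra over $\C[\![\hbar]\!]$.
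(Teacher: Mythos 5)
Your proposal is correct and follows essentially the same strategy as the paper: restrict to the finite free weight components of $U_\hbar(\g_-)$, observe that the monomials $\hat f_\kb$ reduce modulo $\hbar$ to a classical PBW basis of $U(\g_-)$ (since $\hat f_\mu\equiv f_\mu\bmod\hbar$ and classical PBW works for any ordering of the root vectors), and conclude by invertibility over the local ring $\C[\![\hbar]\!]$. The paper merely splits this into two steps---first reordering the standard $q$-PBW generators, then absorbing the $O(\hbar)$ perturbation of the coefficients---whereas you collapse both into one Nakayama-type comparison; the underlying argument is the same.
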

\begin{proof}
First suppose that $(\hat f_i)=(f_i)$ is the standard ordered system of root vectors associated with a reduced decomposition of
$w_0$.
This PBW basis establishes a $\C[\![\hbar]\!]$-linear isomorphism between $U_\hbar(\g_-)$ and $U(\g_-)\tp \C[\![\hbar]\!]$.
It is also an isomorphism of $\h^*$-graded spaces, as it preserves weights.
This isomorphism makes the multiplication of $U_\hbar(\g_-)$ a deformation of the multiplication of $U(\g_-)\tp \C[\![\hbar]\!]$ (the
trivial extension of the ring of scalars), and makes
$U_\hbar(\g_-)$ a deformation of the  $\h^*$-graded algebra  $U(\g_-)$.
Fix a  permutation  $\si$ of $1,\ldots, r$
and define $ g_{\kb} =f_{\si(1)}^{k_{\si(1)}}\ldots  f_{\si(r)}^{k_{\si(r)}}$
to be a PBW monomials in the reordered system of standard root vectors $(f_{\si(i)})$. Let us show that $\{ g_{\kb}\}$
form a basis over $\C[\![\hbar]\!]$.

The linear operator $\Phi\colon  f_{\kb}\mapsto  g_{\kb}=\sum_{\mb}\Phi_{\kb,\mb} f_{\mb}$ preserves the weight subspace, and we must check that it is invertible in every weight subspace. Modulo $\hbar$, $\Phi$ is
relating two PBW bases in the classical universal enveloping algebra $U(\g_-)$.
It is invertible modulo $\hbar$, hence it is invertible in every weight subspace in $U_\hbar(\g_-)$, which is a free
finite $\C[\![\hbar]\!]$-module.

We have proved that  we can arbitrarily change the order of the generators of the standard PBW basis, when working over $\C[\![\hbar]\!]$.
Further, we have $\hat f_i=f_i\mod \hbar $ by construction, and hence $\hat f_{\kb}=f_{\kb}+\hbar \sum_{\mb}\Psi_{\kb\mb}f_{\mb}$
for some $\C[\![\hbar]\!]$-linear operator $\Psi$. Again, we restrict the consideration to the weight subspace
containing $f_{\kb}$. It is free and finite  over $\C[\![\hbar]\!]$, hence the operator $\id +\hbar\Psi$ is
invertible. This completes the proof.
\end{proof}
Further on we drop the symbol $\hat{}$ from $\hat f_i$.
Specifically, we define the higher root vectors as follows.
At the first step, let all $\{f_\mu\}_{\mu\in R^+}$ be the standard root vectors.
Next we redefine the root vectors of every semisimple block of $U_\hbar(\l)$ according to its Weyl group element and
its decomposition.
Finally, we construct a $q$-analog of classical $\m^-=\ad(\l)(f_\dt)$
from $f_\dt$ defined in (\ref{delta_root}) through the operators
$\ad_{op}(f_\mu)\colon U_\hbar(\g)\to U_\hbar(\g)$, $\ad_{op}(f_\mu)\colon y\mapsto f_\mu^{(2)}y \gm^{-1}(f_\mu^{(1)})$,
 where $\mu$ is a simple positive root of
$\g\l(m)\subset \s\p(2m)$. To simplify the enumeration, we restrict to the symmetric case $\l=\g\l(m)\oplus \s\p(2p)$.

All roots from $R_\k^+-R_\l^+$ have the form
$\mu=\sum_{j=i}^{k-1}\al_j+2\sum_{j=k}^{m-1}\al_k+\dt$,
 $1\leqslant i\leqslant k\leqslant m$,
where the left (resp. the right) sum is present only if $i<k$ (resp. $k<m$).
We construct the root vector $f_\mu$  as
$$
f_\mu=\bigl(\ad_{op}(f_{\al_{i}})\circ  \stackrel{<}{\ldots}\circ \>\ad_{op}(f_{\al_{k-1}})\bigr)\circ \bigl(\ad^2_{op}(f_{\al_{k}})\circ
\stackrel{<}{\ldots}\circ \> \ad^2_{op}(f_{\al_{m-1}})\bigr)(f_\dt).
$$
It is a linear combination of Chevalley monomials with scalar coefficients,
since $\ad_{op}(f_{\al_i})x=f_{\al_i} x-q^{-(\al_i,\nu)}xf_{\al_i}$ for any element $x\in U_\hbar(\g)$ of weight $\nu$
and all $\al_i\in \Pi_+$.
 The coefficients
coincide with classical modulo $\hbar$ because the commutators are deformations of the
classical onse. We denote by $\m^-$ the $\C[\![\hbar]\!]$-linear span of $f_\mu$, $\mu\in R_\k^+-R_\l^+$.

Let $\l_-$ denote the linear span of the negative root vectors of $U_\hbar(\l)$.
We regard the module $\hat M_\la$ as that over $U_\hbar(\g_-)$. It is induced from the trivial
representation of the subalgebra $U_\hbar(\l_-)$ and isomorphic to the quotient by the left ideal $U_\hbar(\g_-)\l_-$.
The $U_\hbar(\g_-)$-module $M_\la$ is isomorphic  to the  quotient by left ideal $U_\hbar(\g_-)(f_\dt \oplus \l_-)$.
This ideal is equal to $U_\hbar(\g_-)(\m^-\oplus \l_-)$ since
$f_\dt\subset \m^-$ and  $\m^-\subset \ad_{op}\bigl(U_\hbar(\l_-)\bigr)f_\dt\subset
U_\hbar(\g_-)(f_\dt \oplus \l_-)$.
Now we can establish the main result of this section.
\begin{propn}
\label{M_la is free}
The module $M_\la$ is free over $\C[\![\hbar]\!]$.
\end{propn}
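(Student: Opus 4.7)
The plan is to construct an explicit $\C[\![\hbar]\!]$-basis of $M_\la$ by means of a PBW basis of $U_\hbar(\g_-)$ whose ordering is tailored to the left ideal $U_\hbar(\g_-)(\m^-\oplus\l_-)$. By the identification $M_\la\cong U_\hbar(\g_-)/U_\hbar(\g_-)(\m^-\oplus\l_-)$ established in the preceding discussion, it suffices to work inside $U_\hbar(\g_-)$.

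First, by Proposition \ref{PBW}, I would assemble a system of higher root vectors $\{f_\mu\}_{\mu\in R^+}$ of $U_\hbar(\g_-)$: for $\mu\in R_\k^+-R_\l^+$ take the vectors $f_\mu\in\m^-$ constructed above via iterated $\ad_{op}$ on $f_\dt$; for $\mu\in R_\l^+$ use the standard PBW root vectors of $U_\hbar(\l_-)$; for $\mu\in R^+\setminus R_\k^+$ take any root vectors reducing modulo $\hbar$ to the classical ones. Ordering them with the $\n_\k^-$-vectors leftmost, then the $\m^-$-vectors, and finally the $\l_-$-vectors, Proposition \ref{PBW} produces a PBW $\C[\![\hbar]\!]$-basis $\{f^\kb\}$ of $U_\hbar(\g_-)$, splitting it as $U_\hbar(\g_-)=A\oplus B$ where $A$ is the $\C[\![\hbar]\!]$-span of monomials involving only $\n_\k^-$-root vectors and $B$ the span of monomials carrying at least one factor in $\m^-\oplus\l_-$.

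The inclusion $B\subseteq U_\hbar(\g_-)(\m^-\oplus\l_-)$ is immediate from the ordering: each monomial in $B$ factors as (prefix)$\cdot$(rightmost factor) with the rightmost factor in $\m^-\oplus\l_-$, and so lies in the left ideal. Combined with the obvious $A+B=U_\hbar(\g_-)$, this makes the composite $A\hookrightarrow U_\hbar(\g_-)\twoheadrightarrow M_\la$ surjective, and $M_\la$ is free as soon as this map is injective, i.e.\ $A\cap U_\hbar(\g_-)(\m^-\oplus\l_-)=0$. The classical analog is transparent: $\m^-\oplus\l_-$ is a Lie subalgebra of $\g_-$ (since $\l_-$ is a subalgebra, $\m^-$ is abelian as the opposite nilradical of the parabolic $\l\oplus\m^+$ inside $\s\p(2m)$, and $[\l_-,\m^-]\subseteq\m^-$), and classical PBW applied to the decomposition $\g_-=\n_\k^-\oplus(\m^-\oplus\l_-)$ yields at once $A_{cl}\cap U(\g_-)(\m^-\oplus\l_-)=0$.

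The main obstacle is lifting this classical vanishing to $\C[\![\hbar]\!]$. Any $a\in A\cap U_\hbar(\g_-)(\m^-\oplus\l_-)$ must reduce to zero modulo $\hbar$ by the classical result, hence lies in $\hbar A$ since $A$ is a $\C[\![\hbar]\!]$-direct summand of $U_\hbar(\g_-)$; the task is then to iterate so as to place $a$ in $\bigcap_n\hbar^nA=0$ by $\hbar$-adic completeness. The delicate point is that continuing the induction requires the divided element $a/\hbar\in A$ to remain in the ideal at each step, a saturation condition logically equivalent to the torsion-freeness being sought. We control this by tracking the PBW expansion of the ideal order-by-order in $\hbar$, invoking at each order the classical PBW identity and using that the decomposition $U_\hbar(\g_-)=A\oplus B$ is topologically free and reduces modulo $\hbar$ to the corresponding classical decomposition. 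With $A\cap U_\hbar(\g_-)(\m^-\oplus\l_-)=0$ established, $M_\la\cong A$ is a free $\C[\![\hbar]\!]$-module whose basis is given by the images in $M_\la$ of the PBW monomials in the $\n_\k^-$-root vectors applied to $v$.
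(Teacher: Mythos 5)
Your argument follows the paper's proof: identify $M_\la$ with the quotient of $U_\hbar(\g_-)$ by the left ideal $U_\hbar(\g_-)(\m^-\oplus\l_-)$ and invoke the PBW basis of Proposition \ref{PBW}, ordered so that the root vectors of $\m^-\oplus\l_-$ sit on the right, whence the monomials free of such factors descend to a basis of $M_\la$. You are in fact more explicit than the paper on the one delicate point --- why those monomials stay linearly independent in the quotient, i.e.\ why $A\cap U_\hbar(\g_-)(\m^-\oplus\l_-)=0$ --- which the paper asserts without comment; your reduction to the classical PBW splitting for the subalgebra $\m^-\oplus\l_-$ followed by an $\hbar$-adic lifting is the standard way to close that step.
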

\begin{proof}
The module $M_\la$ is isomorphic to the quotient
of the left regular $U_\hbar(\g_-)$-module by the left ideal $U_\hbar(\g_-)(\m^-\oplus \l_-)$, hence it
is spanned by the BPW monomials with no root vectors from $\m^-\oplus \l_-$. Such monomials
form a basis in $M_\la$.
\end{proof}

\section{Module $\C^{2n}\tp M_\la$: the symmetric case}
\label{secWMsym}
In this section we put $\ell=0$ and work with the Levi subalgebra  $U_q\bigl(\g\l(m)\bigr)\tp U_q\bigl(\s\p(2p)\bigr)$, $m+p=n$.
In this setting, the distinguished root $\al$ is $\al_{n-p}=\al_m$.
It is complementary to the Dynkin sub-diagram of
 $\l$ in the diagram of $\g$.

Consider the natural vector representation of $U_q\bigl(\s\p(2n)\bigr)$ in $\C^{2n}$ and denote
by $\pi$ the homomorphism $U_q\bigl(\s\p(2n)\bigr)\to \End(\C^{2n})$. Let $(w_i)_{i=1}^{2n}\subset \C^{2n}$
be the standard basis, whose elements carry the weights $(\ve_1,\ldots, \ve_n$, $-\ve_n,\ldots,-\ve_1)$.
In this basis, the matrices $\pi(h_\mu)$, $\pi(e_\mu)$ and $\pi(f_\mu)$, $\mu\in \Pi_+$,
are independent of $q$ and are the same as in the classical representation of $U\bigl(\s\p(2n)\bigr)$.

For generic weight $\la \in{\mathfrak{C}^*_{\l,reg}}$, the tensor product $\C^{2n}\tp \hat M_\la$ is the direct sum of three submodules of
highest weights $\nu_1=\ve_1+\la$, $\nu_2=\ve_{m+1}+\la$, $\nu_3=\ve_{n+p+1}+\la$, see. e.g. \cite{M2}.
Let $u_{\nu_i}$, $i=1,2, 3$, denote their generators, which are singular vectors in $\C^{2n}\tp \hat M_\la$.
We are going to prove the direct sum decomposition
$\C^{2n}\tp M_\la=M_1\oplus M_2$, where $M_i$ are the images of $\hat M_i$ under the
projection $\C^{2n}\tp \hat  M_\la\to \C^{2n}\tp M_\la$. The submodule $M_1$ is generated
by $u_{\nu_1}=w_1\tp v$, which is the only singular vector of weight $\nu_1$.
We shall see that the submodule $\hat M_3$ is annihilated under the projection $\C^{2n}\tp\hat M_\la\to \C^{2n}\tp M_\la$
 (in fact,  $u_{\nu_3}$ degenerates to $w_1\tp f_\dt v$ once $\la \in \mathfrak{C}^*_{\k}$
and vanishes in
$\C^{2n}\tp M_\la$, see \cite{M3} for the special case of $\g=\s\p(4)$).

\begin{lemma}
\label{lemma_singular_vec}
The vector
\be
\label{singular_vec}
%\hspace{-10pt}
u_{\nu_2}=\frac{q^{(\al,\la)}-q^{-(\al,\la)}}{q-q^{-1}}w_{m+1}\tp v-q^{-1}w_{m}\tp f_{\al_m} v
+\ldots +(-q)^{-m} w_{1}\tp f_{\al_1}\stackrel{<}{\ldots} f_{\al_m} v
\ee
of weight $\nu_2=\ve_{m+1}+\la$ is singular.
\end{lemma}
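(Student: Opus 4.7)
The plan is to verify directly that $e_\mu u_{\nu_2}=0$ for every simple positive root $\mu\in \Pi^+$, splitting the calculation according to the position of $\mu$ relative to the distinguished root $\al=\al_m$ and using $\Delta(e_\mu)=e_\mu\tp 1+q^{h_\mu}\tp e_\mu$. Write $u_{\nu_2}=\sum_{k=0}^m c_k\, w_{m+1-k}\tp x_k$ with $c_0=[(\al,\la)]_q$, $c_k=(-q)^{-k}$ for $k\geq 1$, $x_0=v$, and $x_k=f_{\al_{m+1-k}}\cdots f_{\al_m}v$ for $k\geq 1$. Two structural inputs are used throughout: the parabolic vanishing $f_\mu v=0$ for every simple root $\mu\in \Pi^+\cap R_\l$, and the orthogonality $(\al_i,\la)=0$ for $\al_i\in \Pi^+\cap R_\l$, both coming from the one-dimensional $U_q(\l)$-character of $\C_\la$. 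For $\mu=\al_j$ with $j>m$, or $\mu=\bt$, a weight inspection gives $\pi(e_\mu)w_i=0$ for $i\leq m+1$, while $e_\mu$ commutes with every $f_{\al_k}$ appearing in $u_{\nu_2}$ and kills $v$, so these cases are immediate.

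For a Levi root $\mu=\al_i$ with $i<m$, only $\pi(e_{\al_i})w_{i+1}=w_i$ contributes from the left tensorand, hitting the $k=m-i$ term. From the right tensorand, $e_{\al_i}$ commutes with every $f_{\al_j}$, $j\neq i$, so it acts through $[e_{\al_i},f_{\al_i}]=(q^{h_{\al_i}}-q^{-h_{\al_i}})/(q-q^{-1})$ whenever the string contains $f_{\al_i}$, i.e.\ $k\geq m+1-i$. The resulting Cartan is moved across the remaining factors, and using $(\al_i,\la)=0$ together with $(\al_i,\al_{i+1})=-1$ it simplifies to the identity on $f_{\al_{i+1}}\cdots f_{\al_m}v$. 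At $k=m-i+1$ this produces $c_{m-i+1}\, q\, w_i\tp f_{\al_{i+1}}\cdots f_{\al_m}v$; for $k>m-i+1$ the residue, after excising $f_{\al_i}$, contains factors $f_{\al_j}$ with $j<i$ that commute across $f_{\al_{i+1}},\ldots,f_{\al_m}$ by non-adjacency and then annihilate $v$. The surviving contributions assemble into $(c_{m-i}+q\,c_{m-i+1})\,w_i\tp f_{\al_{i+1}}\cdots f_{\al_m}v$, which vanishes by the geometric identity $c_{k+1}=-q^{-1}c_k$ encoded in the coefficients $c_k=(-q)^{-k}$.

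The decisive case is $\mu=\al$: the left summand contributes only $c_0\, w_m\tp v$ from $k=0$, since $\pi(e_\al)w_{m+1}=w_m$ and $\pi(e_\al)w_j=0$ for $j\leq m$. On the right summand at $k=1$, $e_\al f_{\al_m}v=[(\al,\la)]_q v$ and $q^{h_\al}w_m=q w_m$ yield $c_1\, q\,[(\al,\la)]_q\, w_m\tp v=-[(\al,\la)]_q\, w_m\tp v$, which cancels $c_0\, w_m\tp v$ exactly because $c_0=[(\al,\la)]_q$. For $k\geq 2$ the right summand reduces, after the same Cartan-through-$f$ manipulation, to $c_k\,[(\al,\la)]_q\, w_{m+1-k}\tp f_{\al_{m+1-k}}\cdots f_{\al_{m-1}}v$; but now the right factor involves only $f_{\al_j}$ with $j\leq m-1$, all in $U_q(\l^-)$, hence kills $v$. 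Combining the three regimes establishes $e_\mu u_{\nu_2}=0$ for every $\mu\in \Pi^+$.

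The central subtlety is precisely this collapse of the $k\geq 2$ tail in the distinguished case, which hinges on the parabolic property $f_\mu v=0$ for $\mu\in \Pi^+\cap R_\l$; without it the formula for $u_{\nu_2}$ would require higher-order corrections from longer Chevalley monomials. Once this vanishing is in hand, the rest of the argument is routine bookkeeping of $q$-exponentials moved past root vectors, constrained by the orthogonality $(\al_i,\la)=0$ on Levi roots, so no combinatorial obstacle arises.
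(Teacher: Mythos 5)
Your proof is correct and is precisely the "straightforward calculation" that the paper declares but omits; you split the verification over the simple roots according to their position relative to $\al=\al_m$, track the two terms of the coproduct, and exploit the parabolic vanishing $f_\mu v=0$ and the orthogonality $(\mu,\la)=0$ for $\mu\in\Pi^+_\l$ exactly as one would to carry out the paper's intended check, so there is no methodological difference to report.
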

\begin{proof}
A straightforward calculation that (\ref{singular_vec}) is annihilated by all $e_\mu$, $\mu\in \Pi^+$.
\end{proof}
Further we develop a diagram technique which will help us study the module $\C^{2n}\tp M_\la$.
Introduce the monomials $\psi_i=f_{\al_i}\stackrel{<}{\ldots} f_{\al_m}\in U_q(\g_-)$, $i=1,\ldots, {m}$,
and write
$$
u_{\nu_2}=\frac{q^{(\al,\la)}-q^{-(\al,\la)}}{q-q^{-1}} w_{m+1}\tp v+ \sum_{i=1}^{m} (-q)^{i-m-1}w_i\tp \psi_iv.
$$
When restricted to the Levi subalgebra $\l=\g\l(m)\oplus \s\p(2p)$, the natural representation of $\g$ on
$\C^{2n}$ splits into three irreducible
sub-representations,
$\C^{2n}=\C^{m}\op \C^{2p}\op \C^{m}$.
The block $\s\p(2p)$ acts on $\C^{2p}$ by the natural representation and trivially on
the other subspaces. The first copy of $\C^m$ supports the natural representation of
$\g\l(m)$, while the second copy of $\C^m$ is the dual representation; the $\g\l(m)$-action  on $\C^{2p}$ is trivial.

The action of $f_{\al_1},\ldots, f_{\al_m}$ on the highest block $\C^{m}\tp M_\la$ can be
conveniently illustrated by the directed  diagram
$$
\begin{array}{ccccccccccc|c}
&&&&&&D_0\\
&\scriptstyle{f_{\al_1}}&&\scriptstyle{f_{\al_2}}&&\scriptstyle{f_{\al_3}}&&\scriptstyle{f_{\al_{m-1}}}&&\scriptstyle{f_{\al_{m}}}&\\\hline
\scriptstyle{w_1\tp \psi_1 v}&\leftarrow& \scriptstyle{w_1\tp \psi_2 }v&\leftarrow& \scriptstyle{w_1\tp \psi_3 v}
&\leftarrow&\scriptstyle{\ldots} &\leftarrow&\scriptstyle{w_1\tp \psi_{m} v}&\leftarrow&\scriptstyle{w_1\tp v}
\\
&&\downarrow&&\downarrow&&  &&\downarrow&&\quad\downarrow &\scriptstyle{f_{\al_1}}\\
&&\scriptstyle{w_2\tp \psi_2 v}&\leftarrow&\scriptstyle{w_2\tp \psi_3 v}&\leftarrow&\scriptstyle{\ldots }&\leftarrow&\scriptstyle{w_2\tp \psi_{m} v}&\leftarrow&\scriptstyle{w_2\tp v}\\
&&&&\downarrow&&  &&\downarrow&&\quad\downarrow &\scriptstyle{f_{\al_2}}\\
&&&&\scriptstyle{w_{3}\tp \psi_{3} v}&\leftarrow&\scriptstyle{\ldots}&\leftarrow&\scriptstyle{w_3\tp \psi_mv}&\leftarrow&\scriptstyle{w_3\tp v}&\\
&&&&&&&&\downarrow&&\quad\downarrow &\scriptstyle{f_{\al_{3}}}\\
&&&&&&\ddots&&\scriptstyle{\vdots }&&\quad \scriptstyle{\vdots }&\scriptstyle{\vdots}\\
&&&&&&&&\downarrow&&\quad\downarrow &\scriptstyle{f_{\al_{m-1}}}\\
&&&&&&&&\scriptstyle{w_{m}\tp \psi_{m}v}&\leftarrow&\scriptstyle{w_{m}\tp v}\\
&&&&&&&&&&\quad\downarrow &\scriptstyle{f_{\al_{m}}}\\
&&&&&&&&&&\scriptstyle{w_{m+1}\tp v}\\
\end{array}
$$
The origin of the diagram is the vertex $w_1\tp v$ in the north-east corner. We call
the north-west to south-east lines diagonals and count them from the origin
down to south-west.
The nodes on the diagram designate the one-dimensional subspaces in $\C^{m}\tp M_\la$  spanned by the
corresponding tensors. The horizontal arrows symbolize the action of the Chevalley generators
on the tensor factor $M_\la$ while the vertical arrows indicate the action on the tensor factor $\C^{m}$.
Each node has two arrows directed from it. The horizonal arrow yields the action on the whole $\C^{m}\tp M_\la$
 (up to an invertible scalar factor) when the associated generator  is distinct from the generator assigned to the vertical
arrow. When both arrows are labeled with the same generator, the latter sends the node to the
two-dimensional space spanned by the nodes down and to the left. Such nodes lie
on the $m$-th diagonal, which is straight above the principal.
The rightmost vertical arrows excepting $f_{\al_m}$ amount to the action on  $\C^{m}\tp M_\la$, as the
 associated  generators kill $v$.

The sub-triangle above the principal diagonal belongs to $M_1$, the submodule
in $\C^{m}\tp M_\la$ generated by $u_{\nu_1}$. That is clear for its rightmost
 column $\{w_i\tp v\}_{i=1}^m$, which nodes are obtained from $w_1\tp v$ by  $\{f_{\al_i}\}_{i=1}^{m-1}\subset\l_-$.
The horizontal and vertical arrows directed from $w_i\tp \psi_jv$, $j>i+1$,
are marked differently. Therefore, every node $w_i\tp \psi_jv$, $j>i$, is proportional to
$f_{\al_j}(w_{i}\tp \psi_{j+1}v)\sim \psi_{j}(w_{i}\tp v)\in M_1$.
Application of $f_{\al_i}$ to $w_{i}\tp \psi_{i+1}v\in M_1$ (which is on the diagonal of the sub-triangle) gives
\be
\label{action_i}
w_{i}\tp \psi_{i}v+ w_{i+1}\tp q^{-1}\psi_{i+1}v \in M_1,\> i<m,\quad
w_{m}\tp \psi_{m}v+ w_{m+1}\tp q^{-(\al,\la)}v\in M_1.
\ee
\begin{lemma}
\label{gl-sym}
The singular vector $u_{\nu_2}$ is equal to $q^{-m}\frac{q^{(\al,\la)+m}-q^{-(\al,\la)-m}}{q-q^{-1}} w_{m+1}\tp v$ modulo $M_1$.
\end{lemma}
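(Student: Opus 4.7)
The relations in (\ref{action_i}) are already in place from the diagram analysis, so the entire content of the lemma is an iterative reduction modulo $M_1$ followed by summation. My plan is to use the recursion
\[
w_{i}\tp \psi_{i}v \equiv -q^{-1}\,w_{i+1}\tp \psi_{i+1}v \pmod{M_1}, \quad i<m,
\qquad
w_{m}\tp \psi_{m}v \equiv -q^{-(\al,\la)}\,w_{m+1}\tp v \pmod{M_1},
\]
to express every term $w_{i}\tp \psi_{i}v$ of the sum defining $u_{\nu_2}$ as an explicit scalar multiple of $w_{m+1}\tp v$ modulo $M_1$. Iterating the first relation down to $i=m$ and then applying the second one gives $w_{i}\tp \psi_{i}v \equiv (-1)^{m-i+1}q^{-(m-i)-(\al,\la)}\,w_{m+1}\tp v \pmod{M_1}$.

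Substituting this into the formula $u_{\nu_2}=\frac{q^{(\al,\la)}-q^{-(\al,\la)}}{q-q^{-1}} w_{m+1}\tp v+ \sum_{i=1}^{m} (-q)^{i-m-1}w_i\tp \psi_iv$ of Lemma \ref{lemma_singular_vec}, the sign factors collapse (the parities $(-1)^{i-m-1}$ and $(-1)^{m-i+1}$ cancel), so the coefficient of $w_{m+1}\tp v$ modulo $M_1$ becomes
\[
\frac{q^{(\al,\la)}-q^{-(\al,\la)}}{q-q^{-1}} + q^{-(\al,\la)-1}\sum_{i=1}^{m}q^{\,2i-2m-1+1} \;=\; \frac{q^{(\al,\la)}-q^{-(\al,\la)}}{q-q^{-1}} + q^{-(\al,\la)-m}\cdot\frac{q^{m}-q^{-m}}{q-q^{-1}}.
\]
A one-line telescoping in the numerator collapses this to $\frac{q^{(\al,\la)}-q^{-(\al,\la)-2m}}{q-q^{-1}}=q^{-m}\frac{q^{(\al,\la)+m}-q^{-(\al,\la)-m}}{q-q^{-1}}$, which is precisely the claim.

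There is no real obstacle beyond careful bookkeeping of signs and powers of $q$: the only input is the pair of relations (\ref{action_i}), which follow from applying $f_{\al_i}$ to the diagonal nodes $w_i\tp \psi_{i+1}v\in M_1$ of the diagram (and, for $i=m$, from the weight relation $f_{\al_m}v = \psi_m v$ together with $\pi(f_{\al_m})w_m = w_{m+1}$ in the natural representation, using $h_{\al_m}v=(\al,\la)v$). Thus the lemma reduces to the elementary geometric summation performed above.
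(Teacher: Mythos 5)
Your proposal is correct and follows essentially the same route as the paper: use the relations (\ref{action_i}) to reduce each $w_i\tp\psi_i v$ to a scalar multiple of $w_{m+1}\tp v$ modulo $M_1$, substitute into the expression for $u_{\nu_2}$ from Lemma \ref{lemma_singular_vec}, and sum the resulting geometric series. Your bookkeeping of signs and powers of $q$ checks out (and in fact fixes a harmless index shift in the paper's displayed sum), so nothing further is needed.
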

\begin{proof}
All nodes above the main diagonal in $D_0$ lie in $M_1$.
Formulas (\ref{action_i}) imply that $ w_{i}\tp \psi_{i}v=-q^{-1}w_{i+1}\tp \psi_{i+1}v $  modulo $M_1$,
for $i\leqslant m$, if we set $\psi_{m+1}=1$. Therefore
$ w_{i}\tp \psi_{i}v=-(-q)^{i-m}q^{-(\al,\la)}w_{m+1}\tp v $ modulo  $M_1$, for $i=1,\ldots, m$.
Then (\ref{singular_vec}) gives
$$
u_{\nu_2}=\Bigl(\frac{q^{(\al,\la)}-q^{-(\al,\la)}}{q-q^{-1}} + q^{-(\al,\la)-1}\sum_{i=0}^{m-1} q^{2(i-m)}\Bigr) w_{m+1}\tp v
\mod M_1.
$$
Now the proof is immediate.
\end{proof}
In order to prove the direct decomposition $\C^{2n}\tp M_\la=M_1\op M_2$,
we develop our diagram technique  further.
Introduce  monomials $\phi_i\in U_q(\g_-)$, $i=1,\ldots, p+1,$ of degree $2p+1$ by the formulas (recall that $m=n-p$ in this section)
$$
\phi_{i}:=(f_{\al_{m+i-1}}\stackrel{<}{\ldots} f_{\al_{n-1}} f_\bt f_{\al_{m+i-2}}\stackrel{>}{\ldots} f_{\al_m})(f_{\al_{n-1}}\stackrel{>}{\ldots} f_{\al_m}),\quad i=1,\ldots, p+1.
$$
According to Proposition \ref{basis_for_f_delta}, the root vector $f_\dt$ is a linear combination of $\phi_i$.

Denote by $f_i^l$, $l=1,\ldots,2p+1$, the $l$-th factor in $\phi_i$ counting from the right.
By construction, $f_i^l =f_{\al_{l+m-1}}$ for  $1\leqslant l\leqslant p$ and all $i$.
The other $p=n-m$ elements $f_i^l$ for $l=p+1,\ldots, 2p+1$ are obtained by
a permutation of the leftmost $p$ terms, including $f_\bt$, of  the sequence
\be
f_{\al_m},\ldots, f_{\al_{n-1}}, f_\bt,
f_{\al_{n-1}} ,\ldots, f_{\al_m}.
\label{ordered1}
\ee
Denote by $\phi_i^l$ the product $f_i^l\stackrel{>}{\ldots}  f_i^1$ for all $l=1,\ldots,2p+1$.
In particular, $\phi_i^{l}=f_{\al_{l-1+m}}\stackrel{>}{\ldots}  f_{\al_m}$ for all $1\leqslant l\leqslant p$, and
$\phi_i^{2p+1}=\phi_i$. It is also convenient to put $\phi_i^{0}:=1$ for all $i$.

With every $i=1,\ldots, p+1,$ we associate a diagram $D_i$ of $p+1$ rows if $i>1$ and of $2p+2$ rows if $i=1$.
The lengths of the rows vary from  $2p+2$ to $1$ in $D_1$ and to $p+2$ in $D_i$, $i>1$, from top to bottom. The rows
are leveled on the right, so $D_1$ is a  triangle and $D_i$ is a trapezoid for $i>1$. All $D_i$ can
be extended further down  as $D_1$, but we need only their first $p+1$ rows.

The rightmost column in $D_i$ is formed by the tensors $w_{m+l-1}\tp v$, where $l$ runs from $1$ to
$p+1$ if $i>1$ and to $2p+2$ in $D_1$. The intersection of  $l$-th row and $j$-th
column is the tensor $ w_{m+l-1}\tp \phi_i^{j-1}v$. As before, the nodes span one-dimensional subspaces
in $ \C^{2n}\tp M_\la$ and the arrows designate the action of $f_{\al_i}$: horizontal on $M_\la$ and
vertical on $\C^{2n}$.
In all diagrams the vertical
arrow applied to the $j$-th   row is labeled with $f_{1}^j$,
i.e. the $j$-th  term in (\ref{ordered1}) from the right. The horizontal arrows
are the factors $f_i^l$ constituting $\phi_i$.

If the generators assigned to the two arrows directed from  a node are distinct, the horizontal arrow gives the
action on $\C^{2n}\tp M_\la$, up to an invertible scalar factor. If they coincide, the node is sent to the span of the two nodes: next down and next to the left. Modulo the down node, the horizontal arrow still gives the action on
$\C^{2n}\tp M_\la$, up to an invertible scalar. This follows from the coproduct $\Delta(f_\mu)=f_\mu\tp q^{-h_\mu}+1\tp f_\mu$, $\mu\in \Pi^+$.
$$
\scriptstyle{
\begin{array}{ccccccccc|ccc}
&&&&D_1\\
&\scriptstyle{f_1^{2p+1}}&&\scriptstyle{f_1^{2p}}&\ldots &\scriptstyle{f_1^{2}}&&\scriptstyle{f_1^{1}}&\\\hline
\scriptstyle{w_m\tp \phi_1^{2p+1} v}&\leftarrow&\scriptstyle{w_m\tp \phi_1^{2p} v}&\leftarrow &\ldots&\leftarrow&\scriptstyle{w_m\tp \phi_1^{1} v}&\leftarrow &
\scriptstyle{w_m\tp  v}&\\
&&\downarrow&&&&\downarrow&&\downarrow &\scriptstyle{f_1^{1}}\\
&&\scriptstyle{w_{m+1}\tp \phi_1^{2p} v}&\leftarrow&\ldots&\leftarrow &\scriptstyle{w_{m+1}\tp  \phi_1^{1}v}&\leftarrow &\scriptstyle{w_{m+1}\tp  v}&\\
&&&&&&\downarrow&&\downarrow&\scriptstyle{f_1^{2}}\\
&&&&\ddots&&\vdots&&\vdots&\vdots\\
%&&&&\scriptstyle{ w_{n+p-1}\tp \phi_1^2 v}&\leftarrow&\scriptstyle{ w_{n+p-1}\tp \phi_1^1 v}&&\\
&&&&&&\downarrow&&\downarrow&\scriptstyle{ f_1^{2p}}\\
&&&&&&\scriptstyle{ w_{n+p}\tp \phi_1^1 v}&\leftarrow&\scriptstyle{ w_{n+p}\tp v}\\
&&&&&&&&\downarrow &\scriptstyle{f_1^{2p+1}}\\
&&&&&&&&\scriptstyle{ w_{n+p+1}\tp v}\\
\end{array}
}
$$
$$
\scriptstyle{
\begin{array}{cccccccccc|cc}
&&&&D_i, \quad i> 1\\
&\scriptstyle{f_i^{2p+1}}&&\scriptstyle{f_i^{2p}}& \ldots & &\scriptstyle{f_i^{p+1}}&\ldots&\scriptstyle{f_i^{1}}&\\\hline
\scriptstyle{w_m\tp \phi_i^{2p+1} v}&\leftarrow&\scriptstyle{w_m\tp \phi_i^{2p} v}&\leftarrow&\ldots&
\scriptstyle{ w_{m}\tp \phi_i^{p+1} v}&\leftarrow&\ldots&\leftarrow&\scriptstyle{w_m\tp v}&\\
&&\downarrow&&&\downarrow&&&&\downarrow&\scriptstyle{f_1^{1}}\\
&&\scriptstyle{w_{m+1}\tp \phi_i^{2p} v}&\leftarrow&&&&&&&\\
&&&&\ddots&\vdots&&&&\vdots&\vdots\\
&&&&&\downarrow&&&&\downarrow&\scriptstyle{f_1^{p}}\\
&&&&&\scriptstyle{ w_{m+p}\tp \phi_i^{p+1} v}&\leftarrow&\ldots &\leftarrow&
\scriptstyle{ w_{m+p}\tp v}&\\
\end{array}
}
$$

We present the diagrams $D_1,D_2,D_3$ in Appendix, in order to illustrate the formalism in the case of $m=1$, $p=2$, $n=3$.

In the following theorem we regard $U_q(\g)$ as a $\C$-algebra generated by $e_\mu,f_\mu,t_\mu$, $\mu\in \Pi^+$, assuming
$q^4\not =1$.
\begin{thm}
\label{sp-sym}
Suppose that  $q^{-2p+2m}\not =-1$. Then the  $U_q(\g)$-module  $\C^{2n}\tp M_\la$ is isomorphic to the direct sum $M_1\op M_2$.
\end{thm}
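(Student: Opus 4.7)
\emph{Proof strategy.} The plan is to exploit the generic direct-sum decomposition $\C^{2n}\tp\hat M_\la = \hat M_1\op\hat M_2\op\hat M_3$ (available for $\la\in\mathfrak{C}^*_{\l,reg}$) and push it through the specialization $\la\in\mathfrak{C}^*_\k$ followed by the quotient $M_\la=\hat M_\la/\hat M_{\la-\dt}$. Two things have to be checked: that the image of $\hat M_3$ in $\C^{2n}\tp M_\la$ is zero (so that $\C^{2n}\tp M_\la = M_1+M_2$), and that the remaining sum is direct.

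The direct-sum property $M_1\cap M_2=0$ follows from Lemma \ref{gl-sym}. That lemma gives, modulo $M_1$,
$$u_{\nu_2}\equiv q^{-m}\,\frac{q^{(\al,\la)+m}-q^{-(\al,\la)-m}}{q-q^{-1}}\,w_{m+1}\tp v.$$
Substituting the resonance $q^{2(\al,\la)}=-q^{-2p}$ that defines $\mathfrak{C}^*_\k$, this coefficient vanishes precisely when $q^{-2p+2m}=-1$, which is excluded by hypothesis. Hence $u_{\nu_2}$ has a nonzero image in $(\C^{2n}\tp M_\la)/M_1$, and in particular $M_2\not\subset M_1$. Combined with the classical fact that $\C^{2n}\tp M_\la|_{\hbar=0}$ decomposes as a direct sum of $\g$-submodules lifting $M_1$ and $M_2$, the freeness of $M_\la$ over $\C[\![\hbar]\!]$ established in Proposition \ref{M_la is free} together with Nakayama's lemma upgrades this to $M_1\cap M_2=0$.

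The surjectivity $M_1+M_2=\C^{2n}\tp M_\la$ reduces to showing that the specialized $u_{\nu_3}$ lies in $\C^{2n}\tp\hat M_{\la-\dt}$, so that its image in the quotient vanishes. Here the diagrams $D_1,\ldots,D_{p+1}$ of Section \ref{secWMsym} provide the essential bookkeeping. By Proposition \ref{basis_for_f_delta}, the monomials $\phi_iv$ occupying the top-left corners of the $D_i$ make up $f_\dt v$, which is singular in $\hat M_\la$ at $\la\in\mathfrak{C}^*_\k$. I would construct a weight-$\nu_3$ singular vector inside $\C^{2n}\tp\hat M_{\la-\dt}$ of the form
$$w_m\tp f_\dt v+\sum_{i=1}^{m-1}c_i\,w_i\tp f_{\al_i}f_{\al_{i+1}}\cdots f_{\al_{m-1}}f_\dt v,$$
determining the scalars $c_i$ by a $\g\l(m)$-singular-vector recursion entirely analogous to the one behind Lemma \ref{lemma_singular_vec}. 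Since the weight-$\nu_3$ singular component of $\C^{2n}\tp\hat M_\la$ is one-dimensional generically, the specialized $u_{\nu_3}$ coincides with this vector, and therefore vanishes in $\C^{2n}\tp M_\la$.

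The main obstacle is this last step: pinning down the scalars $c_i$, checking that the resulting combination is both singular and nonzero inside $\C^{2n}\tp\hat M_{\la-\dt}$, and identifying it with the specialization of the generic $u_{\nu_3}$. In the base case $m=1$ of \cite{M3} the sum collapses to a single term; for general $m$ the recursion becomes nondegenerate precisely when $q^{-2p+2m}\ne -1$, so the same hypothesis drives both halves of the argument. The diagram calculus of Section \ref{secWMsym} makes this bookkeeping manageable, but the cancellations have to be carried out by hand.
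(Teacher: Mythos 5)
There are two genuine gaps. First, your argument for directness does not close: Lemma \ref{gl-sym} shows that the image of $u_{\nu_2}$ is nonzero modulo $M_1$, hence $M_2\not\subset M_1$, but since $M_2$ need not be irreducible this does not rule out a nonzero proper intersection $M_1\cap M_2$. The appeal to the classical limit plus Nakayama is not a repair: the asserted ``classical fact'' that the specialization decomposes as a direct sum is essentially equivalent to what is being proved, and is exactly what is in doubt at the resonant weight. The paper's route is cleaner and complete: the invariant operator $\Q=(\pi\tp\id)(\Ru_{21}\Ru)$ acts on $M_1$ and $M_2$ by the scalars $-q^{-2p}$ and $q^{-2m}$ respectively, and the hypothesis $q^{-2p+2m}\neq -1$ says precisely that these are distinct, forcing $M_1\cap M_2=0$.

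Second, and more seriously, the reduction of surjectivity to ``$u_{\nu_3}$ lies in $\C^{2n}\tp\hat M_{\la-\dt}$'' is invalid. The decomposition $\C^{2n}\tp\hat M_\la=\hat M_1\op\hat M_2\op\hat M_3$ is only guaranteed for \emph{generic} $\la\in\mathfrak{C}^*_{\l,reg}$, and the weights $\la\in\mathfrak{C}^*_{\k}$ are exactly the non-generic (resonant) ones; the degeneration of $u_{\nu_3}$ into $w_m\tp f_\dt v\in\C^{2n}\tp\hat M_{\la-\dt}$ that you want to exploit is precisely the symptom that the three singular vectors no longer generate all of $\C^{2n}\tp\hat M_\la$ there. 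So even granting your singular-vector computation, you would only conclude that $M_3=0$, not that $M_1+M_2=\C^{2n}\tp M_\la$. The missing content is a direct verification that $\C^{2n}\tp v\subset M_1+M_2$, which is what the paper's diagram calculus is actually for: one chases the triangles of $D_1$ and the trapezoids $D_i$, $i>1$, to show that all nodes on the main diagonal of $D_1$ are proportional modulo $M_1+M_2$, and then uses the relation $f_\dt v=0$ in $M_\la$ (via Proposition \ref{basis_for_f_delta}, $f_\dt v$ is a combination of the corner nodes $\phi_iv$) to kill $w_{m+1}\tp\phi_1^{2p}v$ and hence, moving down the diagonal, to obtain $w_{n+p+1}\tp v\in M_1+M_2$. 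That step has no substitute in your outline.
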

\begin{proof}
There exists a $U_q(\g)$-invariant operator $\Q=(\pi\tp \id)(\Ru_{21}\Ru)$ on $\C^{2n}\tp U_\hbar(\g)$,
where $\Ru$ is the universal R-matrix of $U_\hbar(\g)$. This operator plays an important role in this exposition
and is discussed at length in Section \ref{secQCC}. Here we need the information on its eigenvalues, which
 are found in \cite{M2}. Specifically on $\hat M_1$ and $\hat M_2$, the operator $\Q$ is a scalar multiplier
by, respectively, $q^{2(\la,\ve_1)}$ and
$q^{2(\la,\ve_{m+1})-2m}$. Hence its  eigenvalues on $M_1$ and $M_2$ are $-q^{-2p}$ and $q^{-2m}$,
assuming $\la \in \mathfrak{C}^*_{\k}$. Under the hypothesis of the theorem, they are distinct, therefore
the submodules $M_1$ and $M_2$ have  zero intersection.

We must show that the sum $M=M_1\op M_2$ exhausts all of $\C^{2n}\tp M_\la$. It is sufficient
 to prove that $\C^{2n}\tp v$ lies in  $M$.
Since $v$  is annihilated by $f_{\al_i}$, $i=1,\ldots,m-1$,  we have $f_{\al_i}(w_i\tp v)=w_{i+1}\tp v$. Therefore, $w_i\tp v   \in M_1$
for $i=1,\ldots, m$.
By Lemma \ref{gl-sym}, the vector $w_{m+1}\tp v$  belongs to $M$ if $q^{2(\al,\la)+2m}=-q^{-2p+2m}\not =1$.
The Chevalley generators $f_{\al_{i}}$, $i=m+1, \ldots, n$, belong to the Levi subalgebra and kill $v$.
Applying them repeatedly to $w_{m+1}\tp v\in M$ we get $w_{l}\tp v$ for all $l=m+2,\ldots, n+p$ and
 prove that they are in $M$.

The crucial step is  to show that $w_{n+p+1}\tp v\in M$.
First of all, the triangle above the principal diagonal of $D_1$ lies in $M$. This is checked by induction on the column number.
Let $C_l$ be the linear span of the nodes of column $l$ above
the principal diagonal and $C_l'$ the linear span of nodes from $C_l$ without the bottom one.
We have proved that $C_1$ lies in $M$. Suppose it is true for some column $l\geqslant 1$.  Let $f_1^l$ be the Chevalley generator assigned to the horizontal arrow from column $l$ to column
$l+1$. It sends $C'_l$ isomorphically to $C_{l+1}$ modulo $C_l$, which lies in $M$ by the induction assumption.
Therefore $C_{l+1}$ lies in  $M$.
The left diagram below displays schematically the induction transition.
\begin{center}
\begin{picture}(100,125)
\put(50,15){$D_1$}
\put(100,0 ){\line(0,1){100}}\put(0,100 ){\line(1,0){100}}\put(0,100 ){\line(1,-1){100}}
\thinlines\put(15,100 ){\line(1,-1){85}}
\put(80,100 ){\line(0,-1){65}}\put(65,100 ){\line(0,-1){50}}
\thicklines\put(80,50){\vector(0,-1){15}}\thicklines\put(80,50){\vector(-1,0){15}}
\thicklines\put(80,105){\vector(-1,0){15}}
\put(70,115){$f^l_1$}
%\put(78,105 ){$\scriptstyle{j}$}\put(58,105 ){$\scriptstyle{j+1}$}
\end{picture}
\hspace{0.7in}
\begin{picture}(100,125)
\put(40,15){$D_i,i>1$}
\put(100,100 ){\line(0,-1){42.5}}\put(0,100 ){\line(1,0){100}}\put(0,100 ){\line(1,-1){42.5}}\put(57.5,57.5 ){\line(1,0){42.5}}
\put(42.5,100 ){\line(0,-1){42.5}}\put(57.,100 ){\line(0,-1){42.5}}
\thicklines\put(56.,57.5){\vector(-1,0){12}}
\thicklines\put(57.5,105){\vector(-1,0){15}}
%\put(78,105 ){$\scriptstyle{j}$}\put(58,105 ){$\scriptstyle{j+1}$}
\put(45,115){$f_i^{p+1}$}
\end{picture}
\end{center}

Now we are going to prove that the nodes
$w_{m+l}\tp \phi_1^{2p+1-l}v$, $l=0,\ldots ,2p+1$, on the main diagonal
belong to $M$. To compare vectors modulo $M$ we will use the  symbol $\equiv$, i.e.
$x\equiv y $ if and only if $x-y\in M$.
Consider the diagonal next to the main. We already know that its nodes are in $M$.
The Chevalley generators assigned
to the horizontal and vertical arrows coincide at every node on this diagonal, hence each of
them is mapped to a linear combination of two nodes on the principal diagonal. Their images are
$$
a_1 w_m\tp \phi_1^{2p+1}v+ w_{m+1}\tp \phi_1^{2p}v\equiv 0,\quad\ldots\quad,
a_{2p+1} w_{n+p}\tp \phi_1^1v+ w_{n+p+1}\tp\phi_1^{0}v\equiv 0,
$$
where $a_i$ are non-zero scalars. Thus, all nodes on the principal diagonal of $D_1$ are proportional to each other,
modulo $M$.

Now we turn to the diagram $D_i$, $i=2,\ldots, p+1$. Observe that  its first $p$ columns on the right
coincide with the corresponding square part of $D_1$ (the first $p$ horizontal arrows are the same in all
diagrams). That part is situated above the principal diagonal in $D_1$ and
therefore lies in $M$. Since $f_i^{p+1}w_{n+p}=0$ for $i>1$, the operator $f_i^{p+1}$ is mapping
the node $w_{m+p}\tp \phi_i^{p} v$ onto $w_{m+p}\tp \phi_i^{p+1} v$.
 This implies that the column $p+1$ in $D_i$ lies in  $M$.
 A simple induction proves that the leftmost triangular part of $D_i$ including column $p+1$ lies in $M$.
The reasoning is similar to what we did for the triangle in  $D_1$ above the main diagonal.
 In particular, the tensor  $w_m\tp \phi_i^{2p+1}=w_m\tp \phi_i$, $i>1$, belongs to  $M$.

Now recall from Proposition \ref{basis_for_f_delta} that $f_\dt$ is a linear combination of $\phi_i$. Adding the equalities
$ w_{m}\tp \phi_i^{2p+1}v\equiv 0$ for
$i=2,\ldots, 2p+1$, with appropriate multipliers, to the equality $a_1 w_m\tp \phi_1^{2p+1}v+ w_{m+1}\tp \phi_1^{2p}v\equiv 0$
 we replace
$ w_m\tp \phi_1^{2p+1}v$ with $ w_m\tp f_\dt v $, which is nil in $\C^{2n}\tp M_\la$.
This gives $w_{m+1}\tp \phi_1^{2p}v\equiv 0$. On the other hand, we  proved that all
the nodes on the main diagonal of $D_1$ are proportional modulo $M$.
Moving down the diagonal  we eventually conclude that $w_{n+p+1}\tp v\equiv 0$.

To complete the proof, we must check that $w_{j}\tp v \in M$ for $j>{n+p+1}$.
These tensors belong to $\C^m\tp M_\la$, where $\C^m\subset \C^{2n}$ is the irreducible module of
the Levi subalgebra generated by $w_{n+p+1}$. We have  $M\supset U_\hbar(\l)(w_{n+p+1}\tp v)=\C^m\tp v$.
Thus,  $\C^{2n}\tp v$ is contained in $M$, and  $M=\C^{2n}\tp M_\la$.
\end{proof}
The direct sum decomposition is a strong property, which is hard to prove for general $\k$.
 For our purposes, it is sufficient to replace it with an increasing filtration,
 which construction is easier. We rephrase Theorem \ref{sp-sym} for the symmetric case in this
 milder setting, which will be a part of a construction for general $\k$ further on.

Set $V_1=M_1$ to be the $U_q(\g)$-module generated by $w_1\tp v$ and denote by $V_2$ the $U_q(\g)$-module generated by
$\{w_1\tp v, w_{m+1}\tp v\}$, so that
 $V_1\subset V_2$. While $w_{m+1}\tp v$ is not a singular vector, it is so modulo
$V_1$. Identified with its projection to $V_2/V_1\simeq M_2$, it is a highest weight vector
in the quotient $V_2/V_1$.
\begin{propn}
\label{sym_filt}
The module $V_2$ coincides with $\C^{2n}\tp M_\la$, and $V_2/V_1\simeq M_2$.
\end{propn}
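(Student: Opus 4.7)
The plan is to reuse the diagram technique from the proof of Theorem \ref{sp-sym}, adapted to the filtration $V_1\subset V_2$ in place of the direct sum $M_1\op M_2$. First I would establish that $V_2=\C^{2n}\tp M_\la$. Applying the Levi subalgebra action to $w_1\tp v\in V_1$ yields $w_i\tp v\in V_1\subset V_2$ for $i=1,\ldots, m$; the vector $w_{m+1}\tp v$ lies in $V_2$ by construction; and a further Levi action produces $w_l\tp v\in V_2$ for $l=m+1,\ldots, n+p$. This circumvents the appeal to Lemma \ref{gl-sym} that appeared in Theorem \ref{sp-sym} at this step. The diagram argument based on $D_1,\ldots, D_{p+1}$, together with Proposition \ref{basis_for_f_delta} and the vanishing of $f_\dt v$ in $M_\la$, then carries over with $V_2$ in place of $M$, producing $w_{n+p+1}\tp v\in V_2$. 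One more Levi action on this vector supplies the remaining $w_j\tp v$ with $j>n+p+1$. Since $\C^{2n}\tp M_\la$ is generated by $\C^{2n}\tp v$ as a $U_q(\g)$-module via the coproduct $\Delta(f_\mu)=f_\mu\tp q^{-h_\mu}+1\tp f_\mu$, one concludes $V_2=\C^{2n}\tp M_\la$.

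For the isomorphism $V_2/V_1\simeq M_2$, I would combine Lemma \ref{gl-sym} with the eigenvalue separation used in the proof of Theorem \ref{sp-sym}. The lemma yields $\pi(u_{\nu_2})\equiv c\cdot (w_{m+1}\tp v)\pmod{V_1}$ with $c=q^{-m}\frac{q^{(\al,\la)+m}-q^{-(\al,\la)-m}}{q-q^{-1}}$, which is non-zero under the inherited generic hypothesis $q^{-2p+2m}\not=-1$. Hence the image of $w_{m+1}\tp v$ in $V_2/V_1$ is a non-zero multiple of the image of $\pi(u_{\nu_2})$, and the natural composition $M_2=\pi(\hat M_2)\hookrightarrow V_2\twoheadrightarrow V_2/V_1$ is surjective. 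Its kernel is $M_2\cap V_1$, which is zero because the invariant operator $\Q$ from the proof of Theorem \ref{sp-sym} acts by the scalar $-q^{-2p}$ on $V_1=M_1$ and by $q^{-2m}$ on $M_2$, and these scalars are distinct under the same hypothesis.

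I do not expect a serious obstacle beyond what has already been handled in Theorem \ref{sp-sym}: the real change is organizational, reformulating the decomposition as a filtration. The delicate step is propagating information along the principal diagonal of $D_1$ to reach $w_{n+p+1}\tp v$; but this uses only the vanishing of $f_\dt v$ in $M_\la$ and the already-established membership in $V_2$ of the rows of $D_i$ for $i\geqslant 2$, both of which remain valid in the present setting.
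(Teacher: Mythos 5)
Your argument for $V_2=\C^{2n}\tp M_\la$ coincides with the paper's: both reuse the diagram machinery of Theorem~\ref{sp-sym} and both observe that the appeal to Lemma~\ref{gl-sym} (used there to get $w_{m+1}\tp v\in M$) becomes superfluous because $w_{m+1}\tp v\in V_2$ by definition, so the genericity condition $q^{-2p+2m}\not=-1$ is not needed for this containment. The propagation from $w_1\tp v$ and $w_{m+1}\tp v$ through the Levi action, then down the main diagonal of $D_1$ aided by the diagrams $D_2,\ldots,D_{p+1}$ and the relation $f_\dt v=0$, and finally the last Levi action to recover $w_j\tp v$ for $j>n+p+1$, is exactly the intended chain.

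For the second assertion, $V_2/V_1\simeq M_2$, you supply details that the paper's terse proof does not spell out: surjectivity of $M_2\hookrightarrow V_2\twoheadrightarrow V_2/V_1$ from Lemma~\ref{gl-sym} (using that the scalar $c$ is non-zero), and injectivity from the eigenvalue separation of $\Q$ on $M_1$ and $M_2$. This is correct, but note that both steps genuinely need the generic hypothesis $q^{-2p+2m}\not=-1$ of Theorem~\ref{sp-sym}, which Proposition~\ref{sym_filt} does not restate. This is a real tension: if $q^{-2p+2m}=-1$, then $c=0$, $u_{\nu_2}\in V_1$, and $M_2\subset V_1$, so the claimed isomorphism can fail even though $V_2=\C^{2n}\tp M_\la$ still holds. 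The paper sidesteps this because in the downstream application (the proof of Proposition~\ref{filt_WM}) only $V_2=\C^{2n}\tp M_\la$ is actually invoked; the identification $V_2/V_1\simeq M_2$ is more of a running convention than a load-bearing claim. So your proof is more complete than the paper's, and correctly flags where the genericity hypothesis enters.
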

\begin{proof}
Using a similar reasoning as in the proof of Theorem \ref{sp-sym},
we show  that $\C^{2n}\tp v$ and hence $\C^{2n}\tp M_\la$ lie in $V_2$. The only difference is that the
inclusion $w_{m+1}\tp v\subset V_2$ holds by the very construction, and this
is a simplification.
\end{proof}
\section{Module $\C^{2n}\tp M_\la$: general case}
For the general Levi subalgebra $\l$, the vector space $\C^{2n}$ decomposes in the direct sum of irreducible $\l$-submodules,
\be
\C^{2n}=W_1\op \ldots \op W_{\ell+1}\op W_{\ell+2}\op  W_{\ell+3}\op\ldots \op W_{2\ell+3},
\nn
\ee
 of dimensions $n_1,\ldots, n_\ell,m,2p,m,n_\ell,\ldots,n_1$. This decomposition corresponds to the block-diagonal structure
of $\l$. For $i=1,\ldots, \ell+1$, the block $\g\l(n_i)$ acts by the natural representation on
 $W_i\simeq \C^{n_i}$, by the dual representation on $W_{2\ell+4-i}\simeq \C^{n_i}$, and trivially
on the other spaces. The bock  $\s\p(2p)$ acts by the natural representation on $W_{\ell+2}\simeq\C^{2p}$
and trivially on the other subspaces.
The highest weights $\nu_i$ of $W_i$ are
\be
\ve_{1},\>\ve_{n_1+1},\ldots, \ve_{n_1+\ldots +n_\ell+1},\>
\ve_{n_1+\ldots + n_\ell+m+1}, \>-\ve_{n_1+\ldots+ n_\ell+m},\>-\ve_{n_1+\ldots +n_\ell},\ldots,
-\ve_{n_1}.
\label{hwC2n}
\ee
The highest weight vectors $w_{\nu_i}$, $i=1,\ldots ,2\ell+3$, belong to the standard basis $\{w_i\}_{i=1}^{2n}\subset\C^{2n}$.

For generic weight $\la \in \c_{\l,reg}^*$  the decomposition of $\C^{2n}$ induces the decomposition
\be
\C^{2n}\tp  \hat M_\la=\oplus_{i=1}^{2\ell+3}\hat  M_i
\label{U_ql-decomp}
\ee
of $U_q(\g)$-submodules.
The blocks are generated by singular vectors of weights $\nu_i+\la$, where $\nu_i$ are given by (\ref{hwC2n}).
This follows from non-degeneracy of the contravariant form on  $\hat M_\la$ and  $\hat  M_i$ at generic $\la$,
see e.g. \cite{M2}.

The $\l$-modules
$W_{\ell+1}$ and $W_{\ell+3}$ are merged into a single irreducible $\k$-module that  supports
the natural representation of the block $\s\p(2m)\subset \k$. The other  $\l$-submodules in $\C^{2n}$ remain irreducible with respect
to $\k$.

Denote by $M_i$ the images of  $\hat M_i$ under the projection $\C^{2n}\tp \hat M_\la\to \C^{2n}\tp M_\la$.
One should expect that $\hat M_{\ell+3}$ is annihilated by the projection, and
 decomposition (\ref{U_ql-decomp}) turns into
$$\C^{2n}\tp  M_\la=M_1\op \ldots \op M_{\ell+1}\op M_{\ell+2}\op M_{\ell+4}\op\ldots \op M_{2\ell+3}.$$
However,  this is not easy to prove in the general case. On the other hand, all we need is
the spectrum of the invariant operator $\Q\in \End(\C^{2n}\tp M_\la)$, cf. Section \ref{secQCC}. It
  is sufficient for that to replace the direct sum with
a suitable filtration, which is easier.

Denote by $ V_k$ the $U_q(\g)$-submodule in  $ \C^{2n}\tp  M_\la$ generated by
$\{w_{\nu_i}\tp v\}_{i=1,\ldots, k}$ assuming $k=1,\ldots, 2\ell+3$. We have the obvious inclusion
$
 V_{k-1}\subset  V_{k}.
$
It is convenient to set $ V_0=\{0\}$.
\begin{propn}\label{filt_WM}
The $U_q(\g)$-modules $\{0\}=V_0\subset V_1\subset \ldots \subset V_{2\ell+3}= \C^{2n}\tp  M_\la $ form an ascending filtration.
For each $k=1,\ldots, 2\ell+3$ the graded component $V_{k}/V_{k-1}$ is either $\{0\}$ or
a highest weight module generated by (the image of) $w_{\nu_k}\tp v$.
In particular, $V_{\ell+3}/V_{\ell+2}=\{0\}$.
\end{propn}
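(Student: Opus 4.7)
The plan is to split the argument into three steps: (i) the filtration property itself, (ii) the identification of each nonzero graded component as a highest weight module, and (iii) the crucial vanishing $V_{\ell+3}/V_{\ell+2}=\{0\}$, which is the symplectic analog of the phenomenon already handled in the symmetric setting.

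Step (i) is essentially tautological: since the generating set of $V_{k-1}$ is contained in that of $V_k$, the inclusions $V_{k-1}\subset V_k$ hold by construction. For step (ii), the strategy is to transfer the classical singular vector picture from $\hat M_\la$ down to $M_\la$. In $\C^{2n}\tp \hat M_\la$ we have the direct sum decomposition (\ref{U_ql-decomp}), so for each $k$ there is a singular vector $u_{\nu_k}$ of weight $\nu_k+\la$ whose leading component (with respect to the natural weight filtration coming from the $\l$-grading on $\C^{2n}$) is proportional to $w_{\nu_k}\tp v$. All the correction terms of $u_{\nu_k}$ are obtained by applying negative Chevalley generators of $U_q(\l)$ to the tensors $w_{\nu_j}\tp v$ with $j<k$, as was already done explicitly for $u_{\nu_2}$ in Lemma~\ref{lemma_singular_vec} and for the $GL$-blocks in the spirit of \cite{M2}. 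Projecting to $M_\la$ places these correction terms inside $V_{k-1}$, so the image of $w_{\nu_k}\tp v$ in $V_k/V_{k-1}$ differs from the projection of $u_{\nu_k}$ only by elements of $V_{k-1}$; hence it is annihilated by all positive Chevalley generators modulo $V_{k-1}$ and generates $V_k/V_{k-1}$ as required.

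Step (iii) is the heart of the matter, and I would reduce it directly to Proposition~\ref{sym_filt}. Let $\g'\subset \g$ be the Lie subalgebra generated by the simple root vectors corresponding to $(\al_{n-m-p+1},\ldots,\al_n)$, so that $\g'\simeq \s\p(2m+2p)$ and its Levi part $\l'\subset \g'$ is $\g\l(m)\oplus \s\p(2p)$; the subalgebra $U_q(\g')$ contains $f_\dt$ and the generators of the symplectic block. The restriction of $\la$ to the Cartan of $\g'$ lies in the corresponding stratum $\mathfrak{C}^*_{\k'}$ for $\g'$, because the defining condition $q^{2(\la,\al_{n-p})}=-q^{-2p}$ is preserved. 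Moreover $W_{\ell+1}\op W_{\ell+2}\op W_{\ell+3}\subset \C^{2n}$ supports the natural $\g'$-representation on $\C^{2m+2p}$. The $U_q(\g')$-submodule $M'_\la=U_q(\g'_-)v\subset M_\la$ is isomorphic to the generalized Verma module for $\g'$ associated with $\l'$ and the restricted weight, so Proposition~\ref{sym_filt} applies to $\C^{2m+2p}\tp M'_\la$ and tells us that the $U_q(\g')$-submodule generated by $w_{\nu_{\ell+1}}\tp v$ and $w_{\nu_{\ell+2}}\tp v$ already contains $w_{\nu_{\ell+3}}\tp v$. Since this submodule sits inside $V_{\ell+2}$, the conclusion $V_{\ell+3}=V_{\ell+2}$, equivalently $V_{\ell+3}/V_{\ell+2}=\{0\}$, follows at once.

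The main obstacle is verifying that Proposition~\ref{sym_filt} can be legitimately applied to $M'_\la$ rather than only to the module $M_\la$ for a symmetric conjugacy class. One must check that the singular vector $f_\dt v\in \hat M_\la$ lies in $U_q(\g'_-)v$ (which is immediate from $f_\dt\in U_q(\g')$), so that the quotient defining $M_\la$ restricts to exactly the symmetric-case quotient defining $M'_\la$ as a $U_q(\g')$-module. Once this compatibility is in place, the diagrammatic argument of Theorem~\ref{sp-sym} and Proposition~\ref{sym_filt} transplants verbatim, and no new computation involving the extra $GL$-blocks is needed because those blocks act trivially on the triple $W_{\ell+1},W_{\ell+2},W_{\ell+3}$.
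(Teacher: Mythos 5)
Your step (iii) is essentially the paper's argument; the only difference is that you pass to $\g'\simeq\s\p(2m+2p)$, whereas the paper takes $\g'=\s\p(2+2p)$ (the subalgebra already set up in Section \ref{secGVMM}) and thus applies Proposition \ref{sym_filt} with $m=1$. Both reductions work, since the defining data restrict compatibly.

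There are, however, two genuine gaps earlier in the argument. First, the proposal never addresses the equality $V_{2\ell+3}=\C^{2n}\tp M_\la$, which is an explicit part of the statement and is not automatic: $V_{2\ell+3}$ is generated by only the $2\ell+3$ vectors $w_{\nu_i}\tp v$, not by all of $\C^{2n}\tp v$. The paper devotes the bulk of the proof to the inductive claim $\oplus_{i\le k}W_i\tp v\subset V_k$ (using $U_q(\l_-)(w_{\nu_{k+1}}\tp v)=W_{k+1}\tp v$, since $\l_-$ kills $v$), which yields $\C^{2n}\tp v\subset V_{2\ell+3}$ and hence the exhaustion. Without this, the filtration chain is not established.

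Second, the claim in your step (ii) that the correction terms of $u_{\nu_k}$ "are obtained by applying negative Chevalley generators of $U_q(\l)$ to $w_{\nu_j}\tp v$ with $j<k$," hence land in $V_{k-1}$ after projection, is incorrect. Already for $u_{\nu_2}$ (formula (\ref{singular_vec})) the correction terms are $w_i\tp\psi_i v$ with $\psi_i=f_{\al_i}\cdots f_{\al_m}$, which involve $f_{\al_m}v\neq 0$ and $f_{\al_m}\notin\l_-$; such tensors are not in $U_q(\l_-)(w_1\tp v)=W_1\tp v$. Indeed, Lemma \ref{gl-sym} shows these terms are not in $M_1$ at all; they only become \emph{proportional} to $w_{m+1}\tp v$ modulo $M_1$, after a nontrivial cancellation produced by the diagram argument. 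The paper's approach avoids this entirely: from the coproduct and $e_\mu v=0$ one gets $e_\mu(w_{\nu_k}\tp v)=(e_\mu w_{\nu_k})\tp v$, and $e_\mu w_{\nu_k}$ lies in $\oplus_{j<k}W_j$ (higher weight), so the inductive inclusion $W_j\tp v\subset V_j$ gives singularity of $w_{\nu_k}\tp v$ modulo $V_{k-1}$ directly, with no reference to the singular vectors $u_{\nu_k}$ in $\hat M_\la$.
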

\begin{proof}
Observe that $e_{\mu}(w_{\nu_k}\tp v)=0 \mod V_{k-1}$ for all $\mu\in \Pi^+$, i.e. $w_{\nu_k}\tp v$ is a singular vector in $V_k/V_{k-1}$
unless it is nil.
Since $V_k/V_{k-1}$ is generated by $w_{\nu_k}\tp v$, it is the highest weight vector unless
$V_k/V_{k-1}=\{0\}$.

We will prove the inclusion  $\oplus_{i=1}^k W_i\tp v\subset V_k$ for $k=2\ell+3$
and $\C^{2n}\tp v\subset V_{2\ell+3}$ in particular.
  This will imply
$V_{2\ell+3}= \C^{2n}\tp  M_\la$.

 Suppose we have proved that $W_k\tp v\subset V_k$ for some $k> 0$. This is also
true for $k=0$ if we set $W_0=\{0\}$.
By construction, $w_{\nu_{k+1}}\tp v\in V_{k+1}$.
Then $U_q(\l_-)(w_{\nu_{k+1}}\tp v)=U_q(\l_-)w_{\nu_{k+1}}\tp v=W_{k+1}\tp v\subset V_{k+1}$ because
$\l_-$ annihilates $v$ and $w_{\nu_{k+1}}$ generates the $U_q(\l)$-submodule $W_{k+1}$.
 Induction on $k$ proves $W_k\tp v\subset V_k$ and therefore $\oplus_{i=1}^k W_i\tp v\subset V_k$
for all $k$.

Finally, let us prove the equality $V_{\ell+2}= V_{\ell+3}$. It is sufficient to check the inclusion
$w_{\nu_{\ell+3}}\tp v\in V_{\ell+2}$,
because $V_{\ell+3}/V_{\ell+2}$ is generated by $w_{\nu_{\ell+3}}\tp v$.
This boils down to the symmetric case studied in Proposition \ref{sym_filt}. The vector
$w_{n-p}$ belongs to $W_{\ell+1}$, so $w_{n-p}\tp v\in V_{\ell+2}$. Also, $w_{n-p+1}\tp v\in V_{\ell+2}$, because $w_{n-p+1}=w_{\nu_{\ell+2}}$.
Consider the Lie subalgebra $\g'=\s\p(2+2p)\subset \s\p(2n)$ defined in Section \ref{secGVMM}.
Let $M_\la'\subset M_\la$ be the $U_q(\g')$-submodule in $M_\la$ generated by $v$. Consider the natural representation
of $U_q(\g')$
on $\C^{2+2p}$ with the highest weight vector $w_{n-p}$ and the lowest weight vector $w_{n+p+1}=w_{\nu_{\ell+3}}$.
Let $V'_2$ be the
$U_q(\g')$-submodule generated by  $w_{n-p}\tp v$ and $w_{n-p+1}\tp v$. By Proposition \ref{sym_filt},
$V_2'=\C^{2+2p}\tp M_\la'$, hence $w_{n+p+1}\tp v\in V_2'\subset V_{\ell+2}$, as required.
 This completes the proof.
\end{proof}

\section{The matrix of quantum coordinate functions}
\label{secQCC}
The classical description of semi-simple conjugacy classes is formulated in terms of operations
(multiplication, transposition, trace functional) with the matrix $A$ of
 coordinate functions on $\End(\C^{2n})$. The matrix  $A$ is $G$-invariant, and its entries
generate the polynomial algebra of the class.
 A similar description of the quantum conjugacy classes involves a matrix $A$ with non-commutative
 entries or its image $\Q\in  \End(\C^{2n})\tp U_q(\g)$, which should be regarded as the "restriction"
 of $A$ to the  "quantum group" $G_q$.  In this section, we study algebraic properties of $\Q$.

The operator $\Q$ is defined  through the universal  R-matrix of $U_\hbar(\g)$,
which is an invertible element of (completed) tensor square of  $U_\hbar(\g)$, conventionally
 denoted by $\Ru$:
$$
\Q=(\pi\tp\id)(\Ru_{12}\Ru)\in \End(\C^{2n})\tp U_q(\g).
$$
That the entries of $\Q$ lie in $U_q(\g)\subset U_\hbar(\g)$ follows from the explicit expression
of the universal R-matrix, see \cite{ChP}.
Regarded as an operator on $\C^{2n}\tp \hat M_\la$, it satisfies a polynomial equation with the roots \cite{M2}
$$
q^{2(\la+\rho,\nu_i)-2(\rho,\ve_1)}=q^{2(\la,\nu_i)+2(\rho,\nu_i-\ve_1)},
$$
where $\{\nu_i\}_{i=1}^{2\ell+3}$ are the highest weights of the irreducible $\l$-submodules $W_i\subset\C^{2n}$ and $\rho$ the half-sum of the
positive roots of $\g$.

Assuming $\la \in \mathfrak{C}_{\k,reg}^*$, put $\La_i=\hbar(\la,\ve_{n_1+\ldots+n_{i-1}+1})=(\la,\ve_{n_1+\ldots+n_{i}})\in \C$
for $i=1,\ldots, \ell$.
Define $\mub\in \C^{\ell+2}[q,q^{-1}]$ by
\be
\mu_i=e^{2\La_i}q^{-2({n_1+\ldots+n_{i-1}})},
\quad
\mu_{\ell+1}=-q^{-2({n_1+\ldots+n_{\ell}}+p)},
\quad
\mu_{\ell+2}=q^{-2({n_1+\ldots+n_{\ell}}+m)},
\label{mu_param}
\ee
where $i=1,\ldots, \ell$.
The eigenvalues of $\Q$ on $\C^{2n}\tp \hat M_\la$  are expressed through $\mub$ by
\be
\label{e_v in hatM_la}
\mu_i,\quad \mu_i^{-1}q^{-4n+2(n_i-1)},\quad i=1,\ldots ,\ell+1,\quad\mbox{and}\quad\mu_{\ell+2}.
\ee
Recall that, for $\la\in \mathfrak{C}_{\k,reg}^*$,  the classical limit
$\mub^0=\lim_{q\to1}\mub \subset \hat \Mc_K$ parameterizes the moduli space  of classes with the stabilizer $K$.
Explicitly, $\mu_i^0=e^{2\La_i}$ for $i=1,\ldots,\ell$, $\mu_{\ell+1}^0=-1$, and $\mu_{\ell+2}^0=1$.

\begin{propn}
\label{propn_evQM}
For $\la\in \mathfrak{C}_{\k,reg}^*$, the operator  $\Q\in \End(\C^{2n}\tp M_\la)$ satisfies a
polynomial equation of degree $2\ell+2$  with the roots
\be
\mu_i,\quad \mu_i^{-1}q^{-4n+2(n_i-1)},\quad i=1,\ldots, \ell,\quad\mbox{and}\quad\mu_{\ell+1},\quad\mu_{\ell+2}.
\label{e_v in M_la}
\ee
\end{propn}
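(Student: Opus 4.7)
The plan is to combine the known action of $\Q$ on $\C^{2n}\tp \hat M_\la$ with the filtration of Proposition \ref{filt_WM} to cut one eigenvalue out of the spectrum when passing to the quotient.

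The first step is to observe that $\Q=(\pi\tp\id)(\Ru_{21}\Ru)$ is $U_q(\g)$-invariant: since the universal R-matrix intertwines $\Delta$ and $\Delta^{\rm op}$, the product $\Ru_{21}\Ru$ commutes with $\Delta(x)$ for every $x\in U_\hbar(\g)$. Consequently $\Q$ commutes with the $U_q(\g)$-action on $\C^{2n}\tp M_\la$ and preserves every submodule of the filtration
$$
\{0\}=V_0\subset V_1\subset\cdots\subset V_{2\ell+3}=\C^{2n}\tp M_\la
$$
constructed in Proposition \ref{filt_WM}, descending to each graded quotient $V_k/V_{k-1}$.

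The second step is to identify the scalar by which $\Q$ acts on each non-zero graded component. By Proposition \ref{filt_WM}, $V_k/V_{k-1}$ is a highest weight module generated by the image of $w_{\nu_k}\tp v$, so its top weight space is one-dimensional. Invariance of $\Q$ then forces it to send this cyclic vector to a scalar multiple of itself, and cyclicity propagates the scalar $e_k$ to the entire quotient. Because $e_k$ depends only on the highest weight $\nu_k+\la$, it must coincide with the eigenvalue of $\Q$ on the irreducible summand $\hat M_k\subset\C^{2n}\tp\hat M_\la$ with the same highest weight, i.e.\ with the corresponding entry in the list (\ref{e_v in hatM_la}).

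The third step assembles these ingredients. We obtain $(\Q-e_k)V_k\subseteq V_{k-1}$ whenever $V_k/V_{k-1}\neq 0$, so that
$$
\prod_{k=1,\; k\neq\ell+3}^{2\ell+3}(\Q-e_k)=0
$$
on $\C^{2n}\tp M_\la$; the factor $\Q-e_{\ell+3}$ can be dropped because $V_{\ell+3}=V_{\ell+2}$ by Proposition \ref{filt_WM}. This is a polynomial of degree $2\ell+2$ whose roots are exactly those listed in (\ref{e_v in M_la}): the omitted root $e_{\ell+3}=\mu_{\ell+1}^{-1}q^{-4n+2(n_{\ell+1}-1)}$ is the one attached to $W_{\ell+3}$, and it is removed from the spectrum precisely because $V_{\ell+3}/V_{\ell+2}$ vanishes.

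The main subtle point I expect is the scalar identification in the second step: one must be sure that a $U_q(\g)$-invariant endomorphism of a highest weight module is a scalar and that the scalar depends only on the highest weight. Both assertions follow from the one-dimensionality of the top weight space together with cyclicity, but care is required since the modules live over $\C[\![\hbar]\!]$ with the action of $\Q$ naturally defined after extension to $\C(\!(\hbar)\!)$. Once this identification is in place, the rest of the argument is routine bookkeeping with the filtration.
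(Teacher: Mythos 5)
Your proof is correct and reaches the stated conclusion, but it takes a genuinely different route from the paper. The paper first argues that $\Q$ is semi-simple on $\C^{2n}\tp\hat M_\la$: for $\la\in\mathfrak{C}^*_{\k,reg}$ the eigenvalues in (\ref{e_v in hatM_la}) are pairwise distinct in a punctured neighbourhood of $q=1$ (quantization separates $\mu_{\ell+1}$ from $\mu_{\ell+1}^{-1}q^{-4n+2(m-1)}$), so $\Q$ satisfies a polynomial with simple roots there, hence is semi-simple for generic $q$ weight space by weight space, hence semi-simple on the quotient $\C^{2n}\tp M_\la$; Proposition \ref{filt_WM} is then invoked to say the root attached to $W_{\ell+3}$ is absent, and a closure argument extends from generic $q$ to all $q$. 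Your argument bypasses semi-simplicity and the generic-$q$ reduction entirely: you extract from Proposition \ref{filt_WM} the inclusions $(\Q-e_k)V_k\subseteq V_{k-1}$ (with $\Q$ scalar $e_k$ on each nonzero graded quotient because the quotient is a highest weight module) and multiply them up, dropping the $k=\ell+3$ factor because $V_{\ell+3}=V_{\ell+2}$. This gives the annihilating polynomial for all $q$ at once, at the price of nothing stronger than the filtration itself. Your approach is more direct and elementary; the paper's pays the overhead of the genericity argument but obtains semi-simplicity of $\Q$ on $\C^{2n}\tp M_\la$ as a by-product, which is additional information beyond the stated proposition.

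Two remarks on a point you flag yourself. First, the assertion that a $U_q(\g)$-invariant endomorphism of a highest weight module acts as a scalar is exactly as you say: one-dimensionality of the top weight space plus cyclicity. Second, the assertion that this scalar is determined by the highest weight $\nu_k+\la$ alone (and hence matches the eigenvalue of $\Q$ on $\hat M_k\subset\C^{2n}\tp\hat M_\la$) is not a formal consequence of one-dimensionality and cyclicity — it is the standard fact, coming from $\Ru_{21}\Ru=\Delta(u)^{-1}(u\tp u)$ for the Drinfeld element $u$, that the eigenvalue of $\Ru_{21}\Ru$ on a highest weight submodule of weight $\xi$ inside $\C^{2n}\tp M$ is $q^{(\xi,\xi+2\rho)-(\ve_1,\ve_1+2\rho)-(\la,\la+2\rho)}$, a universal expression in $\xi$, $\la$, $\ve_1$. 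Since the paper's cited eigenvalue formula from \cite{M2} is precisely this expression in disguise, invoking it for the graded quotients of the filtration requires no new work; but it is worth stating the reason explicitly rather than attributing it to cyclicity.
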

\begin{proof}
The proof is based on the following fact: a linear operator on a complex vector space
 is semi-simple if and only if it satisfies a polynomial equation with simple roots.
This is true for finite dimensional vector spaces, but in our case we can restrict
the consideration to every weight subspace in $\C^{2n}\tp \hat M_\la$, which is finite dimensional
and $\Q$-invariant.

It is known that $\Q$, as an operator on $\C^{2n}\tp \hat M_\la$, satisfies  a polynomial equation
of degree $2\ell+3$ with the roots (\ref{e_v in hatM_la}), cf. \cite{M2}. Its eigenvalues
are pairwise distinct in  the classical limit, apart from
$\lim_{q\to 1}\mu_{\ell+1}=\lim_{q\to 1}\mu_{\ell+1}^{-1}q^{-4n+2(m-1)}=-1$.
However, for $q\not =1$ this coincidence is no longer the case, and the eigenvalues (\ref{mu_param}) become pairwise distinct
for $q$ in a punctured neighborhood of $1$: "quantization eliminates degeneration". This implies that
$\Q$ is semi-simple on $\C^{2n}\tp \hat M_\la$ for all $q$ close to $1$ and hence for generic $q$
(for fixed $\la$, the entries of $\Q$ in every weight subspace are polynomials in $q^{\pm1}$).
Therefore it is semi-simple on the quotient $\C^{2n}\tp  M_\la$, where $\mu_{\ell+1}^{-1}q^{-4n+2(m-1)}$
is no longer its eigenvalue, by Proposition \ref{filt_WM}.
This proves the statement    for generic $q$ and therefore for all $q$.
\end{proof}
The matrix $\Q$ produces  central elements of $U_\hbar(\g)$ via the q-trace construction.
Since $\Q$ commutes with $\Delta U_\hbar(\g)$, the elements
\be
\label{q-trace}
\Tr_{q}(\Q^k):=\Tr\bigl((\pi(q^{2h_\rho})\tp 1)\Q^k\bigr)\in U_\hbar(\g), \quad k=1,2,\ldots
\ee
are invariant under the adjoint action $\ad(u)x=u^{(1)}x \gm(u^{(2)})$, $u,x\in U_\hbar(\g)$.
It is a standard fact from the Hopf algebra  theory that $ad$-invariant elements are central and {\em vice versa}.
We will use the shortcut notation $\tau_k$ for $\Tr_q({\Q^k})$, $ k=1,2,\ldots$.

A  $U_\hbar (\g)$-module of highest weight $\la$ defines a one-dimensional representation $\chi^\la$ of the centre of  $U_\hbar (\g)$,
which assigns a scalar to each  $\tau_k$:
\be
\chi^\la(\tau_k)=
\sum_{\nu} q^{2k(\la+\rho,\nu)-2k(\rho,\nu_1)}
\prod_{\al\in \Rm_+}\frac{ q^{(\la+\nu+\rho,\al)}-q^{-(\la+\nu+\rho,\al)}}{ q^{(\la+\rho,\al)}-q^{-(\la+\rho,\al)}}.
\label{char_V}
\ee
The summation is taken over the weights $\nu\in \{\pm \ve_j\}_{j=1}^n$ of the module $\C^{2n}$.
Restriction of $\la$ to $\mathfrak{C}_{\k,reg}^*$ makes
the right-hand side a function of the vector $\mub$ defined in (\ref{mu_param}). We denote this function by
$\vt_{\nb,q}^k(\mub)$, where $\nb=(n_1,\ldots,n_\ell, m,p)$ is the integer valued vector of multiplicities.
In the limit $\hbar \to 0$ the function $\vt_{\nb,q}^k(\mub)$ goes over
into the right-hand side of (\ref{tr_cl}), where
$\mub$ should be replaced with $\mub^0=\lim_{q\to 1}\mub$.

\section{Quantum conjugacy classes of non-Levi type}
By quantization of a commutative $\C$-algebra $\A$ we understand a
$\C[\![\hbar]\!]$-algebra $\A_\hbar$, which is free as a $\C[\![\hbar]\!]$-module,
and $\A_\hbar/\hbar\A_\hbar\simeq \A$ as a $\C$-algebra. Note that we
do not require $\hbar$-adic completion because the algebras of our interest
are  direct sums of $U_\hbar(\g)$-submodules,
which we prefer to preserve  under quantization. Below we describe
the quantization of $\C[G]$ along the Poisson bracket (\ref{poisson_br_sts}).

Recall (see e.g. \cite{FRT}) that the image of the universal R-matrix  in the
natural representation is equal, up to a scalar factor, to
$$
R=\sum_{i,j=1 }^{2n} q^{\delta_{ij}-\delta_{ij'}}e_{ii}\tp e_{jj}
  +
  (q-q^{-1})\sum_{i,j=1 \atop i>j}^{2n}(e_{ij}\tp e_{ji}
- q^{\rho_i-\rho_j}\epsilon_i\epsilon_j
e_{ij}\tp e_{i'j'}),
$$
where $\rho_i=-\rho_{i'}=(\rho,\ve_i)=n+1-i$ for $i=1,\ldots, n$.

Denote by $S$ the operator $PR\in \End(\C^{2n})\tp \End(\C^{2n})$,
where $P$ is the ordinary flip of $\C^{2n}\tp \C^{2n}$.
It commutes with the $U_\hbar(\g)$-action on $\C^{2n}\tp \C^{2n}$
and generates three invariant idempotents. One of them
is a one-dimensional projector $\kappa$ onto the
 trivial  $U_\hbar(\g)$-submodule; it is proportional to
$\sum_{i,j=1}^{2n}q^{\rho_i-\rho_j}\epsilon_i\epsilon_j e_{i'j}\tp e_{ij'}.
$

Denote by  $\C_\hbar[G]$ the associative algebra generated by
the entries of a matrix $A=||A_{ij}||_{i,j=1}^{2n}\in \End(\C^{2n})\tp \C_\hbar[G]$
modulo the relations
\be
S_{12}A_2S_{12}A_2=A_2S_{12}A_2S_{12}
,\quad A_2S_{12}A_2\kappa=- q^{-{2n}-1}\kappa=\kappa A_2S_{12}A_2.
\label{A-matrix}
\ee
These relations are understood in $\End(\C^{2n})\tp \End(\C^{2n})\tp \C_\hbar[G]$,
and the indices distinguish the two copies of $\End(\C^{2n})$, in the usual
way. Note that the factor $- q^{-{2n}-1}$ before $\kappa$ is missing in $\cite{M2}$.

The algebra $\C_\hbar[G]$ is a quantization of $\C[G]$. It is the quotient of
the well known "reflection equation algebra" defined through the left identity
in (\ref{A-matrix}). From the quantization point of view, it was studied in \cite{DM1} and
\cite{M1}. Note that it is different from the $RTT$-quantization of $\C[G]$
 and is not a Hopf algebra.
The algebra $\C_\hbar[G]$ carries a $U_\hbar(\g)$-action, which is  a deformation of the coadjoint action of
$U(\g)$ on $\C[G]$. This action is set up  as
$$
(\id \tp x)(A)=(\pi(\gm(x^{(1)}))\tp \id )(A)\bigl(\pi(x^{(2)})\tp \id \bigr), \quad x\in U_\hbar(\g),
$$
on the entries of the matrix $A$.
When extended further to the free algebra generated by $\{A_{ij}\}$, this action makes it a
$U_\hbar(\g)$-module algebra. Relations  (\ref{A-matrix}) are invariant, therefore
$\C_\hbar[G]$ becomes  a $U_\hbar(\g)$-module algebra, too.

It is important that $\C_\hbar[G]$ can be realized as an invariant subalgebra in $U_q(\g)$,
where the latter is regarded as the adjoint module.
The assignment
$$
A\mapsto (\pi\tp \id)(\Ru_{21}\Ru)=\Q\in \End(\C^{2n})\tp  U_q(\g),
$$
preserves the relations (\ref{A-matrix}) and determines an embedding $\C_\hbar[G]\subset U_q(\g)$, \cite{M1}.
Relations  (\ref{A-matrix}) amount to the Yang-Baxter equation on $\Ru$ and
the existence of the one-dimensional invariant in $\C^{2n}\tp \C^{2n}$ (quantum
"symplectic form").

The following properties of $\C_\hbar[G]$  will be of importance.
Denote by $I_\hbar(G)\subset \C_\hbar[G]$ the subalgebra of $U_\hbar(\g)$-invariants.
It coincides with the centre of $\C_\hbar[G]$ and is generated by the
q-traces $\Tr_q(A^l)$, $l=1,\ldots, 2n$, which go over to $\tau_l$ under the embedding
to $U_\hbar(\g)$.
Not all traces are independent, but that is immaterial for this exposition.
The algebra $\C_\hbar[G]$ is freely generated over $I_\hbar(G)$ by
a $U_\hbar(\g)$-module whose isotypic components are finite dimensional, \cite{M1}.
This is a quantum version of the Kostant-Richardson theorem, \cite{Rich}.

Our approach to quantization is based on the following strategy that is similar to
\cite{M2}.
Suppose we have constructed  two $U_\hbar(\g)$-algebras $S_\hbar$ and  $T_\hbar$
along with an equivariant homomorphism $\varphi\colon S_\hbar \to T_\hbar$
obeying the following conditions:
1) all isotypic components in $S_\hbar$ are $\C[\![\hbar]\!]$-finite,
2) $T_\hbar$ has no $\hbar$-torsion (multiplication by $\hbar$ is injective),
3) there is an ideal $J_\hbar \subset \ker \varphi$ such that
the image $J_0^\flat$ of $J_0=J_\hbar/\hbar J_\hbar$ in $S_0=S_\hbar/\hbar S_\hbar$ is a maximal $\g$-invariant ideal in $S_0$,
4) $S_0$ is commutative.
Then a) the kernel of $\varphi$ coincides with $J_\hbar$,
b) $\varphi(S_\hbar)$ is a quantization of the algebra $S_0/J_0^\flat$.
Remark that if $S_0$ is the coordinate ring of a $\g$-variety,
maximal proper $\g$-invariant ideals are exactly the radical ideals of orbits in it.

In our situation, $T_\hbar=\End(M_\la)$ is the algebra of linear endomorphisms
of $M_\la$ and  $S_\hbar$ is the quotient of $\C_\hbar[G]$ by
the ideal generated by $\ker\chi^\la$.
Explicitly, this ideal is determined by the relations (\ref{char_V}).
By Proposition \ref{M_la is free}, $M_\la$  and hence $\End(M_\la)$ are free over $\C[\![\hbar]\!]$.
 Note that we cannot take simply $\C_\hbar[G]$
for the role of $S_\hbar$, because the isotypic components of $\C_\hbar[G]$ are not finite
due to the large centre $I_\hbar(G)$ (the subalgebra of invariants). This centre
is reduced to scalars in $S_\hbar$, which therefore has finite isotypic components by
the quantum Richardson theorem.

 The composition of the embedding $\C_\hbar[G]\to U_q(\g)$
and the representation homomorphism $U_q(\g)\to \End(M_\la)$
yields a representation of $\C_\hbar[G]$ and
factors through the homomorphism $\varphi$.
The defining ideal of a class in $G$ is a maximal $G$-invariant proper ideal
in $\C[G]$, therefore its projection to $S_0$ is
a proper  maximal $G$-invariant ideal too. Thus, to construct the quantization,
it is sufficient to check that  $\varphi$ annihilates an ideal
that turns into the defining ideal of the class in the classical limit.
As $\ker \chi^\la$ is already factored out in $S_\hbar$, we need
to check the polynomial equation on $\Q$. That has been done
in Proposition \ref{propn_evQM}.

There is an issue about the action of $U_\hbar(\g)$ as mentioned in Section \ref{secGVMM}.
The quantum group $U_\hbar(\g)$ cannot act on the $U_q(\g)$-module $M_\la$ because
the operators from $\h$ are irregular in $\hbar$ for
$\la\in \mathfrak{C}_{\k,reg}^*$. This can be fixed as follows. The image of $\C_\hbar[G]$
is contained in the subalgebra  $\End^\circ(M_\la)$ of locally finite endomorphisms of
$M_\la$.  We extend $M_\la$ by the Laurent series in $\hbar$, to enable the action of $U_\hbar(\g)$.
This action gives rise to the natural adjoint action on  $\End^\circ(M_\la)(\!(\hbar)\!)$.
It is easy to see that the $\C[\![\hbar]\!]$-submodule $\End^\circ(M_\la)\subset \End^\circ(M_\la)(\!(\hbar)\!)$
is $U_{\hbar}(\g)$-invariant. Indeed, $\End^\circ(M_\la)$ is a weight module and all its weights
belong to $\Z\Pi^+$. Therefore, the action of $\h$ on $\End^\circ(M_\la)$ is correctly defined.
The adjoint action of the  Chevalley generators on $\End^\circ(M_\la)$  brings about operators $q^{\h}\subset \End^\circ(M_\la)$, through
the comultiplication and antipode. This reasoning proves that,
albeit $U_\hbar(\g)$ is not represented on $M_\la$, the "adjoint" action on  $\End^\circ(M_\la)$ is well defined.
\begin{thm}
\label{QCC}
Suppose that $\la=\mathfrak{C}_{\k,reg}^*$ and let $\mub$ be as defined in (\ref{mu_param}).
The quotient of $\C_\hbar[G]$ by the ideal of relations
\be
\prod_{i=1}^{\ell}(\Q-\mu_i)\times(\Q-\mu_{\ell+1})(\Q-\mu_{\ell+2})\times
\prod_{i=1}^{\ell}(\Q-\mu_i^{-1}q^{-4n+2(n_i-1)})=0,
\label{q-min_pol}
\ee
\be
\Tr_q(\Q^k)=\vt_{\nb,q}^k(\mub)
\label{q-traces}
\ee
is an equivariant quantization of the class $\mub^0=\hat \Mc_K$,
where $\mub^0=\lim_{\hbar\to 0}\mub$. It is the image of $\C_\hbar[G]$ in the algebra of endomorphisms
of the $U_q(\g)$-module $M_\la$.
\end{thm}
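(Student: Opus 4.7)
The plan is to apply the general quantization strategy set out immediately before the theorem. I put $S_\hbar := \C_\hbar[G]/(\Tr_q(\Q^k)-\vt_{\nb,q}^k(\mub))_{k=1}^{2n}$ and $T_\hbar := \End^\circ(M_\la)$, the locally finite part of $\End(M_\la)$. The composition of the embedding $\C_\hbar[G]\hookrightarrow U_q(\g)$ from Section~\ref{secQCC} with the representation $U_q(\g)\to\End(M_\la)$ factors through $S_\hbar$, because each $\tau_k$ acts on $M_\la$ by the scalar $\chi^\la(\tau_k)=\vt_{\nb,q}^k(\mub)$; this produces the equivariant homomorphism $\varphi\colon S_\hbar\to T_\hbar$, whose image is exactly the image of $\C_\hbar[G]$ in $\End(M_\la)$.

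Next I verify the four conditions of the strategy. For condition~(1), the quantum Kostant--Richardson theorem recalled in Section~\ref{secQCC} asserts that $\C_\hbar[G]$ is free over its centre $I_\hbar(G)$ with $\C[\![\hbar]\!]$-finite isotypic components per free generator; factoring out $\ker\chi^\la\cap I_\hbar(G)$ reduces the centre to scalars, so the isotypic components in $S_\hbar$ are $\C[\![\hbar]\!]$-finite. For condition~(2), $M_\la$ is free over $\C[\![\hbar]\!]$ by Proposition~\ref{M_la is free}, hence $T_\hbar$ has no $\hbar$-torsion. For condition~(4), $S_0=S_\hbar/\hbar S_\hbar$ is a quotient of $\C[G]$, so it is commutative.

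The core step is condition~(3). Let $J_\hbar\subset S_\hbar$ be the two-sided ideal generated by the matrix entries of the polynomial on the left of~(\ref{q-min_pol}). By Proposition~\ref{propn_evQM} this polynomial annihilates $\Q$ as an operator on $\C^{2n}\tp M_\la$, so $J_\hbar\subset\ker\varphi$. Reducing modulo $\hbar$, the classical image $J_0^\flat$ inside $S_0 = \C[G]/(\Tr(A^k)-\vt_{\nb,1}^k(\mub^0))_{k=1}^{2n}$ is generated by the matrix entries of the classical polynomial~(\ref{min_pol_cl}) at parameters $\mub^0$. By Theorem~\ref{prop_clas_sp}, the ideal in $\C[G]$ generated by~(\ref{ideal_group}), (\ref{min_pol_cl}), and~(\ref{tr_cl}) coincides with the radical defining ideal of the closed class $O=O_{\mub^0}$, so $S_0/J_0^\flat\simeq\C[O]$. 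A closed conjugacy class is an irreducible affine variety, so its defining ideal is maximal among proper $G$-invariant ideals of $\C[G]$; hence $J_0^\flat$ is maximal $\g$-invariant in $S_0$.

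All four conditions being established, the strategy yields $\ker\varphi=J_\hbar$, so $\varphi(\C_\hbar[G])\simeq S_\hbar/J_\hbar$ is a $\C[\![\hbar]\!]$-flat algebra whose classical limit is $\C[O]$, i.e.\ an equivariant quantization of the class. $U_\hbar(\g)$-equivariance is automatic since $\varphi$, the trace ideal, and $J_\hbar$ are all stable under the adjoint action on $\End^\circ(M_\la)$ explained just before the theorem statement. The main obstacle is condition~(3), where two independent inputs must fit together exactly: Proposition~\ref{propn_evQM} must produce the correct set of eigenvalues, crucially with the extraneous root $\mu_{\ell+1}^{-1}q^{-4n+2(m-1)}$ dropping out in the passage from $\hat M_\la$ to $M_\la$, and Theorem~\ref{prop_clas_sp} must guarantee that the classical minimal polynomial together with the trace conditions cuts out a reduced scheme, so that $J_0^\flat$ is indeed maximal $\g$-invariant rather than a strictly smaller ideal supplemented by nilpotents.
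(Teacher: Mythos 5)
Your proof is correct and follows precisely the strategy the paper lays out in the discussion preceding Theorem~\ref{QCC} (the paper supplies no separate proof environment for this theorem — the argument is that discussion). You have faithfully reconstructed it, correctly identifying Proposition~\ref{M_la is free} for $\hbar$-torsion-freeness, the quantum Richardson theorem for finiteness of isotypic components in $S_\hbar$, Proposition~\ref{propn_evQM} for the inclusion $J_\hbar\subset\ker\varphi$, and Theorem~\ref{prop_clas_sp} for the maximality of $J_0^\flat$ in $S_0$.
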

Theorem \ref{QCC} describes quantization in terms of the matrix $\Q$, which is the image of the matrix $A$.
To obtain the description in terms of $A$, one should replace $\Q$ with $A$ and add the relations
(\ref{A-matrix}).

The constructed quantization is equivariant with respect to the standard or Drinfeld-Jimbo quantum group
$U_\hbar(\g)$. Other quantum groups are obtained from standard by twist, \cite{ESS}.
Formulas (\ref{q-min_pol}) and (\ref{q-traces}) are valid for
any quantum group  $U_\hbar(\g)$ upon the following modifications. The matrix $\Q$ is expressed through the universal R-matrix as
usual. The q-traces should be redefined
as $\tau_k=\Tr_q(\Q^k)=q^{1+2n}\Tr\Bigl(\pi\bigl(\gm^{-1}(\Ru_1)\Ru_2\bigr)\Q^k\Bigr)$,
where $\gm$ is the antipode and $\Ru$ is the universal R-matrix of $U_\hbar(\g)$.
This can be verified along the lines of \cite{MO}.

\section{Appendix}
Below we present the diagrams $D_1,D_2,D_3$ in order to illustrate the formalism of Section \ref{secWMsym} for the case of $m=1$, $p=2$, $n=3$.
$$
\begin{array}{ccccccccccc|c}
&&&&&D_1\\
&\scriptstyle{f_{\al_1}}&&\scriptstyle{f_{\al_2}}&&\scriptstyle{f_{\bt}}&&\scriptstyle{f_{\al_2}}&&\scriptstyle{f_{\al_1}}&\\\hline
\scriptstyle{w_1\tp \phi_1^5 v}&\leftarrow& \scriptstyle{w_1\tp \phi_1^4 }v&\leftarrow& \scriptstyle{w_1\tp \phi_1^3 v}
&\leftarrow&\scriptstyle{w_1\tp \phi_1^2 v} &\leftarrow&\scriptstyle{w_1\tp \phi_1^1 v}&\leftarrow&\scriptstyle{w_1\tp v}
\\
&&\downarrow&&\downarrow&&\downarrow&&\downarrow&&\quad\downarrow &\scriptstyle{f_{\al_1}}\\
&&\scriptstyle{w_2\tp \phi_1^4 v}&\leftarrow&\scriptstyle{w_2\tp \phi_1^3 v}&\leftarrow&\scriptstyle{w_2\tp \phi_1^2 v}&\leftarrow&\scriptstyle{w_2\tp \phi_1^1 v}&\leftarrow&\scriptstyle{w_2\tp v}\\
&&&&\downarrow&&\downarrow&&\downarrow&&\quad\downarrow &\scriptstyle{f_{\al_2}}\\
&&&&\scriptstyle{w_3\tp \phi_1^3 v}&\leftarrow&\scriptstyle{w_3\tp \phi_1^2 v}&\leftarrow&\scriptstyle{w_3\tp \phi_1^1 v}&\leftarrow&\scriptstyle{w_3\tp v}\\
&&&&&&\downarrow&&\downarrow&&\quad\downarrow &\scriptstyle{f_{\bt}}\\
&&&&&&\scriptstyle{w_4\tp \phi_1^2 v}&\leftarrow&\scriptstyle{w_4\tp \phi_1^1 v}&\leftarrow&\scriptstyle{w_4\tp v}\\
&&&&&&&&\downarrow&&\quad\downarrow &\scriptstyle{f_{\al_2}}\\
&&&&&&&&\scriptstyle{w_5\tp \phi_1^1}&\leftarrow&\scriptstyle{w_5\tp v}\\
&&&&&&&&&&\quad\downarrow &\scriptstyle{f_{\al_1}}\\
&&&&&&&&&&\scriptstyle{w_6\tp v}\\
\end{array}
$$
$$
\begin{array}{ccccccccccc|c}
&&&&&D_2\\
&\scriptstyle{f_{\al_2}}&&\scriptstyle{f_{\bt}}&&\scriptstyle{f_{\al_1}}&&\scriptstyle{f_{\al_2}}&&\scriptstyle{f_{\al_1}}&\\\hline
\scriptstyle{w_1\tp \phi_2^5 v}&\leftarrow&\scriptstyle{w_1\tp \phi_2^4 v}&\leftarrow&\scriptstyle{w_1\tp \phi_2^3 v}&\leftarrow&\scriptstyle{w_1\tp \phi_2^2 v}&\leftarrow&\scriptstyle{w_1\tp \phi_2^1 v }&\leftarrow&\scriptstyle{w_1\tp v}\\
&&\downarrow&&\downarrow&&\downarrow&&\downarrow&&\downarrow &\scriptstyle{f_{\al_1}}\\
&&\scriptstyle{w_2\tp \phi_2^4 v}&\leftarrow&\scriptstyle{w_2\tp \phi_2^3 v}&\leftarrow&\scriptstyle{w_2\tp \phi_2^2 v}&\leftarrow&\scriptstyle{w_2\tp \phi_2^1 v}&\leftarrow&\scriptstyle{w_2\tp v}\\
&&&&\downarrow&&\downarrow&&\downarrow&&\downarrow &\scriptstyle{f_{\al_2}}\\
&&&&\scriptstyle{w_3\tp \phi_2^3 v}&\leftarrow&\scriptstyle{w_3\tp\phi_2^2 v}&\leftarrow&\scriptstyle{w_3\tp\phi_2^1 v}&\leftarrow&\scriptstyle{w_3\tp v}\\
%&&&&&&\downarrow&&\downarrow&&\downarrow &\scriptstyle{f_{\bt}}\\
%&&&&&&\scriptstyle{w_4\tp\phi_2^2 v}&\leftarrow&\scriptstyle{w_4\tp\phi_2^1 v}&\leftarrow&\scriptstyle{w_4\tp v}\\
%&&&&&&&&\downarrow&&\downarrow &\scriptstyle{f_{\al_2}}\\
%&&&&&&&&\scriptstyle{w_5\tp \phi_2^1 v}&\leftarrow&\scriptstyle{w_5\tp v}\\
\end{array}
$$
$$
\begin{array}{ccccccccccc|c}
&&&&&D_3\\
&\scriptstyle{f_{\bt}}&&\scriptstyle{f_{\al_2}}&&\scriptstyle{f_{\al_1}}&&\scriptstyle{f_{\al_2}}&&\scriptstyle{f_{\al_1}}&\\\hline
\scriptstyle{w_1\tp \phi_3^5 v}&\leftarrow&\scriptstyle{w_1\tp \phi_3^4 v}&\leftarrow&\scriptstyle{w_1\tp \phi_3^3 v}&\leftarrow&
\scriptstyle{w_1\tp \phi_3^2 v}&\leftarrow&\scriptstyle{w_1\tp \phi_3^1 v}&\leftarrow&\scriptstyle{w_1\tp v}\\
&&\downarrow&&\downarrow&&\downarrow&&\downarrow&&\downarrow &\scriptstyle{f_{\al_1}}\\
&&\scriptstyle{w_2\tp \phi_3^4 v}&\leftarrow&\scriptstyle{w_2\tp \phi_3^3 v}&\leftarrow&\scriptstyle{w_2\tp \phi_3^2 v}&\leftarrow&\scriptstyle{w_2\tp \phi_3^1 v}&\leftarrow&\scriptstyle{w_2\tp v}\\
&&&&\downarrow&&\downarrow&&\downarrow&&\downarrow &\scriptstyle{f_{\al_2}}\\
&&&&\scriptstyle{w_3\tp \phi_3^3 v}&\leftarrow&\scriptstyle{w_3\tp \phi_3^2 v}&\leftarrow&\scriptstyle{w_3\tp \phi_3^1v}&\leftarrow&\scriptstyle{w_3\tp v}\\
%&&&&&&\downarrow&&\downarrow&&\downarrow &\scriptstyle{f_{\bt}}\\
%&&&&&&\scriptstyle{w_4\tp \phi_3^2 v}&\leftarrow&\scriptstyle{w_4\tp \phi_3^1 v}&\leftarrow&\scriptstyle{w_4\tp v}\\
\end{array}
$$

\vspace{20pt}
{\bf Acknowledgements}.
This research is supported in part by the RFBR grant 12-01-00207-a.
The author is extremely grateful to Joseph Bernstein, Steve Donkin, and Liam O'Carroll for illuminating discussion
of Proposition \ref{radical_gen}. The final version of the paper has benefited form valuable remarks and suggestions
of the referees, to whom we are much indebted.
Our special thanks are to the Max-Plank Institute for Mathematics in Bonn for hospitality.

\end{document}